\newtheorem{thm}{Theorem}[section]
\newtheorem{lemma}[thm]{Lemma}
\newtheorem{corollary}[thm]{Corollary}
\newtheorem{prop}[thm]{Proposition}
\newtheorem{remark}[thm]{Remark}
\numberwithin{equation}{section}
\newcommand{\Rd}{\mathbb{R}^d}
\newcommand{\mt}{\mu\tau}
\newcommand{\id}{{\bf id}}
\newcommand{\norm}[1]{\lVert #1 \rVert} 
\newcommand{\Bignorm}[1]{\bigg\lVert #1 \bigg\rVert}
\newcommand{\RR}{\mathbb{R}}
\newcommand{\PP}{\mathbb{P}}
\DeclareMathOperator*{\argmin}{argmin}
\DeclareMathOperator{\cof}{cof}
\DeclareSymbolFont{bbold}{U}{bbold}{m}{n}
\DeclareSymbolFontAlphabet{\mathbbold}{bbold}
\newcommand{\cL}{\mathcal{L}}
\newcommand{\vp}{\varphi}
\newcommand{\tr}{tr}
\DeclareMathOperator{\sgn}{sgn}
\begin{document}
\title{Well-posedness and regularity for a Polyconvex energy}
\author{Wilfrid Gangbo, Matt Jacobs and Inwon Kim}
\maketitle

\begin{abstract}
We prove the existence, uniqueness, and regularity of minimizers of a polyconvex functional in two and three dimensions, which corresponds to the $H^1$-projection of measure-preserving maps. 
Our result introduces a new criteria on the uniqueness of the minimizer, based on the smallness of the lagrange multiplier. No estimate on the second derivatives of the pressure is needed to get a unique global minimizer. As an application, we construct a minimizing movement scheme to construct $L^r$-solutions of the Navier-Stokes equation (NSE) for a short time interval. Our scheme is an improved version of the split scheme introduced in Ebin--Marsden \cite{EM}, and allows us to solve the equation with $L^r$ initial data $(r>d)$ as opposed to $H^{d/2+5}$ initial data requirement in \cite{EM}. 
\end{abstract}

\section{Introduction} 


A problem of interest in nonlinear elasticity theory is the existence, uniqueness, and the characterization of the minimizers for variational problems of the form
\begin{equation}\label{eq:intro_functional}
 \inf_{ Z \in \mathcal{U} } \Bigl\{ \int_{\Omega}\Bigl( j(x, D Z) -F \cdot  Z \Bigr)dx\Bigr\}.
\end{equation}
Here, $\Omega\subset \RR^d$ is a bounded and open set with smooth boundary,  $j:\RR^d\times \RR^{d\times d}\to\RR\cup\{+\infty\}$ is a lower semicontinuous function, $\mathcal{U}$ is some appropriately chosen function space, and $F:\Omega\to\RR^d$ is a fixed function.  In the context of nonlinear elasticity theory, $\Omega$ represents a reference configuration occupied by an elastic body, $j$ represents the so-called stored energy density of the material, $F$ is an applied force, and $ Z$ represents the deformation undergone by the elastic body. 

\medskip

If we further assume that $j$ is a polyconvex functional (cf. \cite{Ball}), then the theory of the calculus of variations developed by Morrey \cite{Morrey} gives robust results on the existence of minimizers to problem (\ref{eq:intro_functional}). On the other hand, the uniqueness and regularity properties of minimizers or their characterizations in terms of a system of PDEs, remain a major challenge when $j$ fails to be convex (cf. e.g. \cite{Ball-lI} for a summary of a list of unsolved problems). We refer the reader to \cite{AcerbiF87} \cite{EvansLC87} \cite{EvansKneuss} for partial regularity results (up to a set of small measure) for a class of so-called quasiconvex stored energy density functions.

\medskip

An important example of polyconvexity arises in the study of incompressible materials.  Incompressibility can be encoded by requiring that the admissible set of deformation maps satisfies the determinant constraint $\det(DZ)=1$ everywhere.  If we let $\mathcal D{\rm iff}_{\id}(\Omega)$ denote the set of maps $X: \bar \Omega \rightarrow \bar \Omega$ that are volume preserving $C^1$--diffeomorphisms such that $X|_{\partial\Omega}=\id$, then the constraint functional
\[
\chi(Z)=\begin{cases}
0 & \textrm{if}\quad Z\in \mathcal D{\rm iff}_{\id}(\Omega); \\
+\infty &\textrm{otherwise},
\end{cases}
\]
is not convex with respect to $Z$, but is a convex function of $\det(DZ)$ (hence, $\chi$ is polyconvex).  

 In this paper, we are interested in studying a particular incompressible polyconvex functional.  Given an $H^1(\Omega)$ map $S:\Omega\to\Omega$, the so-called \emph{$H^1$-projection problem} seeks the closest incompressible map to $S$ in a weighted $H^1$ norm.  More explicitly, given a parameter $a>0$, one wishes to find a minimizer of the problem
\begin{equation}\label{eq:h1_proj}
J_a(Z):=\chi(Z)+ \frac{1}{2}\norm{Z-S}_{L^2(\Omega)}^2+\frac{a}{2}\norm{DZ-DS}_{L^2(\Omega)}^2.
\end{equation}
Note that since we are forced to choose $Z\in D{\rm iff}_{\id}(\Omega)$, the term $\frac{1}{2}\norm{Z-S}_{L^2(\Omega)}^2$ is equivalent to $-Z\cdot S$, thus the $H^1$ projection problem can be put into the form of problem (\ref{eq:intro_functional}).
 The intriguing problem \eqref{eq:h1_proj} dates back to the work of Lord Kelvin and continues to generate a lot of interest (cf. e.g. \cite{Benjamin, Brenier-arxiv14, Burton} and the discussion on the Navier-Stokes equations below).    The difficulty of problem (\ref{eq:h1_proj}) lies in the non-convexity of the constraint set $\mathcal D{\rm iff}_{\id}(\Omega)$.  Worse yet, in dimension $d=3$, the $H^1$ coercivity of the functional is not sufficient to deduce weak convergence of the determinant.  Thus, although (\ref{eq:h1_proj}) is a polyconvex problem, Morrey's theory cannot even ensure the \emph{existence} of a minimizer.   
 
 In this paper, we will develop a theory that allows us to deduce the existence and uniqueness of minimizers to the $H^1$-projection problem under a mild regularity assumption on the data $S$. 
Our main result in this paper can be summarized as follows:

\medskip

{\it In dimensions $d\in \{2,3\}$, if $r>d$ and $S$ is sufficiently close to the identity in $W^{2,r}(\Omega)$, then there exists a unique minimizer $Z^*\in W^{2,r}_{\id}(\Omega)$ of the $H^1$-projection problem (\ref{eq:h1_proj}).  }

\medskip

To give a more detailed explanation of our result, let us introduce the Euler-Lagrange equation associated to (\ref{eq:h1_proj}).  Formally, the tangent space to a point $Z\in \mathcal{D}{\rm iff}_{\id}(\Omega)$ is the space $\{v\circ Z: v\in C^1_0(\Omega), \, \nabla \cdot v=0\}$.   Therefore, any critical point of (\ref{eq:h1_proj}) must satisfy the equation
\[
(Z-S)\cdot v(Z)+a(DZ-DS):D(v(Z))=0,
\]
for every smooth divergence free vector field $v$ vanishing on the boundary.  Equivalently, every critical point $Z$ must have a corresponding scalar function $p:\Omega\to\RR$ such that
\begin{equation}\label{eq:crit}
(I-a\Delta)(Z-S)+\nabla p(Z)=0,
\end{equation}
where the equation should be interpreted distributionally.
As it turns out, one can also understand $p$ as a Lagrange multiplier for the determinant constraint.  If we define the Lagrangian
\begin{equation}\label{eq:lagrangian}
\mathcal{L}(Z,q)=\frac{1}{2}\norm{Z-S}_{L^2(\Omega)}^2+\frac{a}{2}\norm{DZ-DS}_{L^2(\Omega)}^2+\int_{\Omega} q(x)\big(|\det(DZ(x))|-1\big)\, dx,
\end{equation}
then the equations $\delta_Z \mathcal{L}(Z,q)=0$ and $\delta_{q}\mathcal{L}(Z,q)=0$ correspond to
\begin{equation}\label{eq:qcrit}
(I-a\Delta)(Z-S)+D^T\big(\cof(DZ) q\big)=0, \quad \det(DZ)=1,
\end{equation}
respectively.
Thanks to the Null-Lagrangian identity $D^T\cof(DZ)=0$, the equations (\ref{eq:crit}) and (\ref{eq:qcrit}) can be transformed into one another through the relation $q=p(Z)$. 

To prove our main theorem, we develop a new regularity condition on critical points $(Z^*, q^*)$ (i.e. solutions to equation (\ref{eq:qcrit})) that guarantee that $Z^*$ is the unique solution to the $H^1$-projection problem. When $q^*$ is bounded in $L^{\infty}$ and the singular values of $DZ^*$ are uniformly  bounded away from zero, we are able to show that the Lagrangian $\mathcal{L}(Z, q^*)$ has a previously undiscovered convexity property (c.f. Lemma \ref{lem:matrix_inequality} and Proposition \ref{pro:sufficient-cond}).  This property allows us to conclude that $Z^*$ is the unique minimizer of the relaxed problem $Z\mapsto  \mathcal{L}(Z ,q^*)$, and hence the original problem $Z\mapsto \sup_{q} \mathcal{L}(Z, q)$.  Let us note that although the focus is on $H^1$ projection problem in this work, our strategy can be generalized to other polyconvex problems in both compressible and incompressible non-linear elasticity. 
 
  Of course, our regularity condition is only useful if there actually exist critical points with the required properties. In order to find such points, we employ Ekeland's variational principle (EVP) \cite{Ekeland, ekeland_inverse} to derive a version of implicit function theorem for \eqref{eq:qcrit}.  While the use of the implicit function theorem to find critical points is quite well-known in the calculus of variations literature (see for instance \cite{LeDret} for a similar incompressible problem), the use of  EVP is much less common albeit its powerful nature. Our EVP-based approach is fully quantitative and does not require exactly inverting a linear operator.    Indeed, the allowance for error in our approach considerably simplifies the required calculations.  As long as $S$ is close enough to the identity in $W^{2,r}(\Omega)$, we are able to use EVP to produce a critical point $(Z^*, q^*)\in W^{2,r}_{\id}(\Omega)\times W^{1,r}(\Omega)$ with sufficient regularity to conclude that $Z^*$ is the unique minimizer.

  To illustrate the significance of our quantitative result beyond stationary variational problems, we apply our main result to develop a discrete-in-time minimizing movements scheme to generate mild solutions of the incompressible Navier Stokes equations.  Indeed, our particular interest in the $H^1$-projection problem is rooted in the connection to Navier-Stokes.  This connection can be traced back to Arnold's celebrated geometric interpretation of the incompressible Euler equations \cite{arnold}.   In \cite{brenier_combo}, Brenier gave a very concrete reinterpretation of Arnold's idea as a projection problem.  First one lets the fluid evolve for a short time taking into account inertia only (i.e. evolve the Lagrangian flow map $X$ by the equation $\partial_{tt} X=0$), then the resulting fluid configuration is then projected back onto the space $\mathcal D{\rm iff}(\Omega)$.  Given a sequence of fluid configurations $\{X_0, X_1, \ldots, X_n\}$ and a time step $\tau>0$, Brenier's scheme finds the next fluid configuration by solving the variational problem
\begin{equation}\label{eq:l2_proj}
X_{n+1}\in \argmin_{X\in D{\rm iff}(\Omega)} \frac{1}{2}\norm{\frac{X-X_n}{\tau}-\frac{X_{n}-X_{n-1}}{\tau}}_{L^2(\Omega)}^2. 
\end{equation}
The problem tries to find an incompressible map $X_{n+1}$, whose velocity $\frac{X_{n+1}-X_n}{\tau}$ best matches the velocity at the previous time step $\frac{X_{n}-X_{n-1}}{\tau}$, or in other words, the incompressible map with the least $L^2$ acceleration \cite{gangbo_westdickenberg}.  In fact, problem (\ref{eq:l2_proj}) can be viewed as the $L^2$ analogue of problem (\ref{eq:h1_proj}).

The $H^1$ projection problem appears when one wishes to extend the Brenier formalism to the Navier-Stokes equations \eqref{eq:NSE}.
 In Lagrangian coordinates, the no-slip Navier-Stokes equations take the form 
\begin{equation}\label{eq:lagrangian_ns}
\partial_{tt} X-\mu\Delta v(X)+\nabla p(X)=0, \quad \partial_t X=v(X),\quad \det(DX)=1, \quad X|_{\partial\Omega}=\id,
\end{equation}
where $\mu>0$ is a parameter that represents the viscosity of the fluid.  As one can see from the above equation, it is somewhat awkward to express viscous forces in Lagrangian coordinates.  For this reason, when given a sequence of fluid configurations $\{X_0,\ldots, X_n\}$, it is more natural to find the next fluid configuration $X_{n+1}$ by decomposing $X_{n+1}=Z_{n+1}\circ X_n$ and solving for $Z_{n+1}$.  The map $Z_{n+1}$ will be determined by solving the viscous analogue of Brenier's problem
\begin{equation}\label{eq:proj_1}
Z_{n+1}=\argmin_{Z\in  D{\rm iff}_{\id}(\Omega)}\frac{1}{2}\norm{\frac{Z\circ X_n-X_n}{\tau}-\frac{X_{n}-X_{n-1}}{\tau}}_{L^2(\Omega)}^2 +\frac{\mt}{2}\norm{\frac{DZ-I}{\tau}}_{L^2(\Omega)}^2,
\end{equation}
 where $I$ is the identity matrix.  In contrast to (\ref{eq:l2_proj}), Problem (\ref{eq:proj_1}) attempts to evolve the fluid by finding the incompressible map that simultaneously minimizes both the $L^2$ acceleration of the fluid and the viscous dissipation term $\frac{\mt}{2}\norm{\frac{DZ-I}{\tau}}_{L^2(\Omega)}^2$, which measures the instantaneous loss of kinetic energy to heat.   
 
 As one might expect, problem (\ref{eq:proj_1}) is equivalent to a special case of the $H^1$-projection problem. 
 Indeed, if one chooses $a=\mt$ and sets $S=\id+\tau v$, where $v$ solves
 \[
 (I-\mt\Delta) v=\big(\frac{X_n-X_{n-1}}{\tau}\big)\circ X_n^{-1}, \quad v|_{\partial\Omega}=0,
 \]
  then (\ref{eq:proj_1}) and (\ref{eq:h1_proj}) are identical up to an irrelevant constant term.  Note that the term $\big(\frac{X_n-X_{n-1}}{\tau}\big)\circ X_n^{-1}$  in the above equation roughly corresponds to the Eulerian vector field at time $n\tau$.  Hence, $S\in W^{2,r}(\Omega)$ roughly corresponds to the  vector field being an element of $L^r$.  As we shall show in the final section of the paper, our $W^{2,r}$ theory for the $H^1$ projection problem allows us to build short-time Eulerian and Lagrangian solutions to the Navier-Stokes equations starting from $L^r$ initial data.  Compared to the well-known work of Ebin and Marsden \cite{EM}, which also constructs Navier-Stokes solutions using the volume preserving diffeomorphism formalism,  we are able to solve the equations with much rougher initial data.

\section{Summary of main results}

In this subsection we give precise statements of the main results obtained in this paper.     We begin with our uniqueness result, which produces a sufficient condition for critical points $(Z^*, q^*)$ of (\ref{eq:qcrit}) to be the unique minimizer of the $H^1$ projection problem.  

\begin{thm}[Uniqueness]\label{thm:unique} Suppose that $d\in \{2,3\}$ and there exists a pair $(Z^*, q^*)\in W^{1,d}(\Omega)\times L^{\infty}(\Omega)$ satisfying (\ref{eq:qcrit}).    If there exists a constant $\sigma>0$ such that the singular values of $DZ^*$ are larger than $\sigma$ almost everywhere and 
\begin{equation}\label{eq:aug31.5}
\norm{q^*-\hat{q}}_{L^{\infty}(\Omega)} \leq a\sigma^2\big(2+3(1+\sqrt{3})\big)^{-1},  
\end{equation}
where $\hat{q}=\frac{1}{|\Omega|}\int_{\Omega} q(x)\, dx$,
then $Z^*$ is the unique global minimizer of (\ref{eq:h1_proj}).
\end{thm}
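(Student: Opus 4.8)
The plan is to exploit the convexity structure of the Lagrangian $\mathcal{L}(\cdot, q^*)$ that the hypotheses are engineered to produce. The key observation is that, with $q^*$ frozen, the augmented functional
\[
\mathcal{L}(Z, q^*) = \frac{1}{2}\norm{Z-S}_{L^2}^2 + \frac{a}{2}\norm{DZ - DS}_{L^2}^2 + \int_\Omega q^*\big(|\det DZ| - 1\big)\,dx
\]
is, on the constraint set, a lower bound for $J_a$ (since the extra term vanishes when $\det DZ = 1$, and $|\det DZ| - 1 \ge \det DZ - 1$; one needs a small argument, presumably from Proposition~\ref{pro:sufficient-cond}, that competitors with $\det DZ$ not identically $1$ are handled correctly). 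So it suffices to prove that $Z^*$ is the \emph{unique} global minimizer of the unconstrained functional $Z \mapsto \mathcal{L}(Z, q^*)$ over an appropriate space of maps agreeing with $\id$ on $\partial\Omega$. The first term is strictly convex in $Z$; the second is convex in $DZ$; the entire difficulty is concentrated in the determinant term, which is not convex.

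The heart of the argument is therefore a pointwise matrix inequality — this is Lemma~\ref{lem:matrix_inequality}, which I would invoke as a black box. Roughly, it should say that for $d \in \{2,3\}$, the map $M \mapsto \frac{a}{2}|M - DS(x)|^2 + q^*(x)|\det M|$, or rather its behavior along segments $M_t = (1-t)DZ^*(x) + t\,M$, is convex (or has a quantitatively positive second variation) provided the singular values of $DZ^*(x)$ exceed $\sigma$ and $|q^*(x) - \hat q|$ is below the stated threshold $a\sigma^2(2 + 3(1+\sqrt 3))^{-1}$. The constant $2 + 3(1+\sqrt 3)$ presumably arises from bounding the Hessian of $M \mapsto \det M$ (which for $d=3$ involves cofactor-type bilinear forms whose operator norm along such segments is controlled by the largest singular value, hence by geometric-mean/inverse-singular-value bounds when $\det = 1$) against the $a\,|\,\cdot\,|^2$ term. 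The reason $\hat q$ rather than $q^*$ itself enters: the mean value $\hat q \int_\Omega(|\det DZ| - 1)$ contributes a term that, after using $\int \det DZ = |\Omega|$ for boundary-fixed maps (a null-Lagrangian / degree identity), is harmless or even sign-definite, so only the oscillation $q^* - \hat q$ must be dominated.

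Granting the pointwise inequality, I would assemble the proof as follows. First, record that $Z^*$ is a critical point of $\mathcal{L}(\cdot, q^*)$: this is exactly \eqref{eq:qcrit}, and the Null-Lagrangian identity $D^T\cof(DZ) = 0$ lets one rewrite it as \eqref{eq:crit}. Second, for any admissible competitor $Z$ with $Z - \id \in W^{1,d}_0$ (or $W^{2,r}$), set $Z_t = (1-t)Z^* + tZ$ and consider $g(t) = \mathcal{L}(Z_t, q^*)$. The first two terms of $\mathcal{L}$ give $g'' \ge$ (a strictly positive multiple of $\norm{Z - Z^*}_{H^1}^2$) from the quadratic part; the determinant term contributes $g''$-integrand bounded below pointwise, by Lemma~\ref{lem:matrix_inequality}, by $-\,a\sigma^2(\cdots)^{-1}$ times $|DZ - DZ^*|^2$ weighted appropriately — and the smallness condition \eqref{eq:aug31.5} is precisely what makes the sum strictly positive, so $g$ is strictly convex. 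Combined with $g'(0) = 0$ from criticality, this yields $\mathcal{L}(Z, q^*) > \mathcal{L}(Z^*, q^*)$ for $Z \ne Z^*$. Third, chain the inequalities: for admissible $Z$ (i.e. $Z \in \mathcal{D}\mathrm{iff}_{\id}$, so $\det DZ = 1$),
\[
J_a(Z) = \mathcal{L}(Z, q^*) \ge \mathcal{L}(Z^*, q^*) = J_a(Z^*),
\]
with strict inequality unless $Z = Z^*$, the outer equalities holding because the determinant term vanishes on the constraint set and $\det DZ^* = 1$ by \eqref{eq:qcrit}.

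The main obstacle I anticipate is twofold. First and foremost is establishing Lemma~\ref{lem:matrix_inequality} itself — the pointwise convexity of $M \mapsto \frac{a}{2}|M|^2 + q\,|\det M|$ along segments through a base point with singular values $\ge \sigma$ — which requires a careful, dimension-specific computation of the second directional derivative of $\det$ and a sharp bound on it in terms of the singular values of the base matrix; getting the constant $2 + 3(1+\sqrt 3)$ right (and not worse) is delicate, and the absolute-value $|\det M|$ rather than $\det M$ needs care where $\det$ could vanish along the segment (though the singular-value lower bound on $DZ^*$ plus smallness should keep $\det DZ_t$ away from $0$). Second, the regularity bookkeeping: justifying that $g(t)$ is twice differentiable and that the pointwise second-variation inequality integrates to a genuine convexity statement requires the competitor class and the integrability of $q^*$ ($L^\infty$), $DZ^*$ ($W^{1,d}$, hence $DZ^* \in L^d$ and in $d=2,3$ this controls the relevant products) to be compatible — this is where the hypotheses $q^* \in L^\infty$, $Z^* \in W^{1,d}$, and $d \in \{2,3\}$ are consumed.
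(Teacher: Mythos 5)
Your overall strategy — relax the constraint via the Lagrangian $\mathcal{L}(\cdot,q)$, invoke Lemma~\ref{lem:matrix_inequality} to control the concavity of the determinant term along segments $Z_t=(1-t)Z^*+tZ$, conclude $Z^*$ is the unique minimizer of the relaxed problem, then chain back to $J_a$ using $J_a(Z_0)=\mathcal{L}(Z_0,q)$ on $\mathcal{D}\textup{iff}_{\id}(\Omega)$ — is exactly the paper's approach, and your Bregman/second-variation estimate is the content of Proposition~\ref{pro:sufficient-cond}.

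The one genuine misstep is your explanation of why $\hat q$ replaces $q^*$. You invoke the degree/null-Lagrangian identity $\int_\Omega \det DZ=|\Omega|$, but the Lagrangian contains $|\det DZ|$, and $\int_\Omega(|\det DZ|-1)\geq 0$, so the term $\hat q\int_\Omega(|\det DZ|-1)$ has the sign of $\hat q$ and is \emph{not} harmless in general. The actual mechanism is different: Lemma~\ref{lem:matrix_inequality} gives a \emph{lower} bound on the monotonicity defect of $A\mapsto\sgn(\det A)\cof A$, so multiplying by a Lagrange multiplier and integrating preserves the inequality only if that multiplier is \emph{nonnegative}. The paper therefore replaces $q^*$ by $\bar q:=q^*-\mathrm{ess\,inf}\,q^*\geq 0$. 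This constant shift leaves the criticality equation \eqref{eq:qcrit} unchanged (the null-Lagrangian identity $D^T\cof(DZ)=0$ is used here, for the shift, not on the competitors), and leaves $J_a(Z^*)=\mathcal{L}(Z^*,\bar q)$ and $J_a(Z_0)=\mathcal{L}(Z_0,\bar q)$ unchanged on $\mathcal{D}\textup{iff}_{\id}(\Omega)$. Then $\|\bar q\|_{L^\infty}=\mathrm{osc}(q^*)\leq 2\|q^*-\hat q\|_{L^\infty}\leq a\sigma^2\bigl(1+\tfrac32(1+\sqrt3)\bigr)^{-1}$, which is precisely the threshold Proposition~\ref{pro:sufficient-cond} needs; the factor $2$ in the theorem's constant $2+3(1+\sqrt3)=2\bigl(1+\tfrac32(1+\sqrt3)\bigr)$ comes from converting deviation-from-mean into oscillation, not from any structural role of $\hat q$ in the convexity estimate itself.
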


As we noted earlier, we shall prove Theorem \ref{thm:unique} by showing that the Lagrangian (\ref{eq:lagrangian}) has a novel convexity property.   In particular, we shall show that it is possible to control the concavity of the determinant term $\int_{\Omega} q(x)|\det(DZ(x))|\, dx$. If we let $\mathcal{B}(Z,Z_0, q)$ denote the {\it Bregman divergence} \cite{Bregman}
\begin{equation}\label{eq:bregman}
\mathcal{B}(Z,Z_0,q):=\int_{\Omega} q(x)\Big(|\det(DZ)|-|\det(DZ_0)|-\sgn(\det(DZ_0))\cof(DZ_0)):(DZ-DZ_0)\Big)(x) \, dx,
\end{equation}
then, when $d\leq 3$, we prove the inequality
\begin{equation}\label{eq:bregman_ineq}
-\mathcal{B}(Z,Z_0,q)\leq \frac{c}{2\sigma(DZ_0)^2}\norm{q\det(DZ_0)}_{L^{\infty}(\Omega)}\norm{DZ-DZ_0}_{L^2(\Omega)}^2\quad \hbox{ for } q\geq 0,
\end{equation}
where $c = 1+\frac{3}{2}(1+\sqrt{3})$ and $\sigma(DZ_0)$ is a lower bound on the smallest singular value of $DZ_0$.  Thus, we see that it is possible to control the concavity of $Z\mapsto \int_{\Omega} q(x)|\det(DZ(x))|\, dx$ with $H^1(\Omega)$, as long as we are at a base point $(Z_0,q)$ that is not too irregular.  Indeed, we shall obtain our uniqueness result by controlling the concavity of $Z\mapsto \int_{\Omega} q(x)|\det(DZ(x))|\, dx$ at the critical point $(Z^*, q^*)$ with the $H^1$ term $Z\mapsto \frac{a}{2}\norm{DZ-DS}_{L^2(\Omega)}^2$.  

 Beyond the application to the $H^1$ projection problem, the inequality (\ref{eq:bregman_ineq}) is useful for more general polyconvex variational problems in nonlinear elasticity.  This is due to the fact that terms of the form $\int_{\Omega} q(x)|\det(DZ(x))|\, dx$ can be made to appear in any polyconvex problem involving determinants.

Our uniqueness result is complemented by the following theorem, which guarantees the existence of critical points to (\ref{eq:qcrit}) that satisfy the conditions of Theorem \ref{thm:unique} when $S$ is close to the identity in $W^{2,r}(\Omega)$.  Taken together, Theorem \ref{thm:unique} and \ref{thm:main_2} guarantee  that the $H^1$ projection problem has a unique solution when $S$ is sufficiently close to the identity in $W^{2,r}(\Omega)$. 
\begin{thm}[Existence]\label{thm:main_2}
 For $d\in \{2,3\}$ and $r\in (d,\infty)$, let us define
 \begin{equation}\label{eq:intro_lin_proj}
 u^*:=\argmin_{u\in H^1_0(\Omega), \nabla \cdot u=0} \norm{S-\id-u}_{L^2(\Omega)}^2+a\norm{D(S-\id-u)}_{L^2(\Omega)}^2,
 \end{equation}
 and
  \begin{equation}\label{eq:intro_delta_defs}
\delta:=\norm{(I-a\Delta)(S-\id)}_{L^r(\Omega)}, \quad \delta':=\norm{(I-a\Delta)(S-\id-u^*)}_{L^r(\Omega)}.
 \end{equation}
 If $\delta$, $a^{-\frac{d+r}{2r}}\delta$, $a^{-1}\delta'$ and $a^{-\frac{d+3r}{2r}}\delta^2= a^{-\frac{d+r}{2r}}\delta (a^{-1} \delta)$ are sufficiently small,
 then there exists $Z^*\in W^{2,r}(\Omega)$ and $q^*\in W^{1, r}(\Omega)$ that satisfy the hypotheses of Theorem \ref{thm:unique}. Furthermore, 
 \[
\norm{(I-a\Delta)(Z^*-S)}_{L^r(\Omega)} \lesssim_{d,r} \delta'+ a^{-\frac{d+r}{2r}}\delta^2.
 \]
 \end{thm}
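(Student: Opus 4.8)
To prove Theorem~\ref{thm:main_2}, the plan is to recast the Euler--Lagrange system \eqref{eq:qcrit} as a single smooth equation $\Phi(w,q)=0$ and solve it by an Ekeland-variational-principle version of the implicit function theorem, using the linearized projection $u^{*}$ of \eqref{eq:intro_lin_proj} as the initial guess; the hypotheses of Theorem~\ref{thm:unique} will then be read off quantitatively from the distance between the solution and the guess. Writing $Z=\id+w$ with $w|_{\partial\Omega}=0$ and $Y_{S}:=S-\id$, set
\[
\Phi(w,q):=\Bigl(\,(I-a\Delta)(w-Y_{S})+D^{T}\!\bigl(\cof(D(\id+w))\,q\bigr),\ \det(D(\id+w))-1\,\Bigr).
\]
Since $r>d$, $W^{1,r}(\Omega)$ is a Banach algebra contained in $L^{\infty}$, so $\Phi$ is a $C^{1}$ map from a neighborhood of the origin in $\mathcal W:=\bigl(W^{2,r}\cap W^{1,r}_{0}\bigr)(\Omega;\RR^{d})\times\bigl(W^{1,r}(\Omega)/\RR\bigr)$ into the space $\mathcal Y$ whose first factor is $L^{r}(\Omega;\RR^{d})$ and whose second factor is $W^{1,r}(\Omega)\cap L^{r}_{0}$; the determinant component lands in the mean-zero subspace because $\det(D(\id+w))-1$ is the divergence of an expression in $(w,Dw)$ vanishing on $\partial\Omega$ with $w$, and by the Null-Lagrangian identity $D^{T}(\cof(D(\id+w))q)=\cof(D(\id+w))\nabla q$. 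A zero $(w^{*},q^{*})$ of $\Phi$ is precisely a solution of \eqref{eq:qcrit} with $Z^{*}=\id+w^{*}$, and $D\Phi(0,0)$ is the generalized Stokes operator $\mathcal A(w,\pi)=\bigl((I-a\Delta)w+\nabla\pi,\ \div w\bigr)$.

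The technical core is a bound for $\mathcal A^{-1}$ with explicit powers of $a$, which I would obtain by equipping $\mathcal W$ with $\|(w,\pi)\|_{\mathcal W}:=\|(I-a\Delta)w\|_{L^{r}}+\|\nabla\pi\|_{L^{r}}$ and $\mathcal Y$ with $\|(g,h)\|_{\mathcal Y}:=\|g\|_{L^{r}}+\|h\|_{W^{-1,r}}+a\|\nabla h\|_{L^{r}}$ (so the constraint slot carries unit weight on the ``one derivative below'' scale and weight $a$ on the top derivative, mirroring $I-a\Delta$), and then using the dilation $x\mapsto x/\sqrt a$, under which $I-a\Delta_{x}$ becomes the screened operator $I-\Delta_{\tilde x}$ on $\Omega/\sqrt a$. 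The classical $W^{2,r}$ Stokes theory (Cattabriga, Galdi) applied to $I-\Delta$ is uniform in the size of the dilated domain thanks to the coercive zeroth-order term, so after undoing the dilation $\mathcal A:\mathcal W\to\mathcal Y$ is an isomorphism with $\|\mathcal A^{-1}\|$ of order one; the same rescaling yields $\|v\|_{L^{\infty}}\lesssim a^{-\frac{d}{2r}}\|(I-a\Delta)v\|_{L^{r}}$, $\|Dv\|_{L^{r}}\lesssim a^{-\frac12}\|(I-a\Delta)v\|_{L^{r}}$, $\|D^{2}v\|_{L^{r}}\lesssim a^{-1}\|(I-a\Delta)v\|_{L^{r}}$, $\|Dv\|_{L^{\infty}}\lesssim a^{-\frac{d+r}{2r}}\|(I-a\Delta)v\|_{L^{r}}$, and the $a$-free Morrey bound $\|\pi-\bar\pi\|_{L^{\infty}}\lesssim_{\Omega,r}\|\nabla\pi\|_{L^{r}}$.

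Next I would take $(w_{0},q_{0})=(u^{*},0)$, where $u^{*}$ solves \eqref{eq:intro_lin_proj} with pressure $\pi^{*}$, so $(I-a\Delta)(Y_{S}-u^{*})=\nabla\pi^{*}$ with $\|\nabla\pi^{*}\|_{L^{r}}=\delta'$ and $\|(I-a\Delta)u^{*}\|_{L^{r}}\lesssim\delta+\delta'$. The momentum component of $\Phi(w_{0},q_{0})$ is then $-\nabla\pi^{*}$, of size $\delta'$, and the constraint component is $h:=\det(D(\id+u^{*}))-1$, which, since $\div u^{*}=0$, is a sum of higher Jacobian minors of $Du^{*}$ (quadratic and above). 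The key point is that such Jacobian expressions have a divergence structure, so H\"older's inequality gives $\|h\|_{W^{-1,r}}\lesssim\|u^{*}\|_{L^{\infty}}\|Du^{*}\|_{L^{r}}$, while trivially $\|\nabla h\|_{L^{r}}\lesssim\|Du^{*}\|_{L^{\infty}}\|D^{2}u^{*}\|_{L^{r}}$; inserting the bounds of the previous paragraph for $u^{*}$ yields $\|h\|_{W^{-1,r}}\lesssim a^{-\frac{d+r}{2r}}\delta^{2}$ and $a\|\nabla h\|_{L^{r}}\lesssim a^{-\frac{d+r}{2r}}\delta^{2}$, so $\|\Phi(w_{0},q_{0})\|_{\mathcal Y}\lesssim\delta'+a^{-\frac{d+r}{2r}}\delta^{2}$, which is small under the hypotheses. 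I would then apply Ekeland's variational principle to $(w,q)\mapsto\|\Phi(w,q)\|_{\mathcal Y}$ on a small closed ball about $(w_{0},q_{0})$ in $\mathcal W$, with Ekeland parameter $\lambda$ slightly below $\|\mathcal A^{-1}\|^{-1}$; this is legitimate because on the ball $D\Phi$ differs from $\mathcal A$ only by terms of small operator norm, so $\mathcal A^{-1}$ is a uniform approximate right inverse. Were the resulting Ekeland point $(w^{*},q^{*})$ to have $\Phi(w^{*},q^{*})\neq0$, the direction $\mathcal A^{-1}\bigl(-\Phi(w^{*},q^{*})/\|\Phi(w^{*},q^{*})\|_{\mathcal Y}\bigr)$ would decrease $\|\Phi\|_{\mathcal Y}$ at rate exceeding $\lambda$, contradicting near-minimality; hence $\Phi(w^{*},q^{*})=0$, and Ekeland's principle also gives $\|(w^{*},q^{*})-(w_{0},q_{0})\|_{\mathcal W}\lesssim\lambda^{-1}\|\Phi(w_{0},q_{0})\|_{\mathcal Y}\lesssim\delta'+a^{-\frac{d+r}{2r}}\delta^{2}$, which the hypotheses keep inside the ball.

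Finally I would verify Theorem~\ref{thm:unique} and the a posteriori estimate for $Z^{*}=\id+w^{*}$: that $(Z^{*},q^{*})\in W^{1,d}(\Omega)\times L^{\infty}(\Omega)$ (immediate from $W^{2,r}\hookrightarrow W^{1,d}$, $W^{1,r}\hookrightarrow L^{\infty}$) with $\det DZ^{*}=1$; that $\|(I-a\Delta)w^{*}\|_{L^{r}}\lesssim\delta+\delta'+a^{-\frac{d+r}{2r}}\delta^{2}$, hence $\|Dw^{*}\|_{L^{\infty}}\lesssim a^{-\frac{d+r}{2r}}(\delta+\delta'+a^{-\frac{d+r}{2r}}\delta^{2})$ is small and the singular values of $DZ^{*}=I+Dw^{*}$ exceed some $\sigma$ close to $1$; and, from the momentum equation together with $\cof(DZ^{*})^{-1}=(DZ^{*})^{T}$ (as $\det DZ^{*}=1$), that $\nabla q^{*}=-(DZ^{*})^{T}(I-a\Delta)(Z^{*}-S)$ with $(I-a\Delta)(Z^{*}-S)=(I-a\Delta)(w^{*}-u^{*})-\nabla\pi^{*}$ of $L^{r}$ norm $\lesssim\delta'+a^{-\frac{d+r}{2r}}\delta^{2}$, so $\|(I-a\Delta)(Z^{*}-S)\|_{L^{r}}\lesssim_{d,r}\delta'+a^{-\frac{d+r}{2r}}\delta^{2}$ (the asserted bound) and $\|q^{*}-\hat q^{*}\|_{L^{\infty}}\lesssim\|\nabla q^{*}\|_{L^{r}}\lesssim\delta'+a^{-\frac{d+r}{2r}}\delta^{2}$ by Morrey, which divided by $a\sigma^{2}$ is a multiple of $a^{-1}\delta'+a^{-\frac{d+3r}{2r}}\delta^{2}$, precisely what the hypotheses make small, so \eqref{eq:aug31.5} holds. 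I expect the main obstacle to be the quantitative Stokes analysis of the second paragraph --- choosing the $a$-weights on $\mathcal W$ and $\mathcal Y$ so that $\|\mathcal A^{-1}\|$ is of order one --- together with the realization that the determinant residual must be controlled in the weak ($W^{-1,r}$, $a$-weighted) norm rather than in $W^{1,r}$, the gain coming from the compensated-compactness structure of Jacobian minors; matching the exponents in these estimates on $h$ against the four smallness assumptions is the delicate bookkeeping.
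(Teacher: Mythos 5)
You take a genuinely different route from the paper. The paper never solves the coupled momentum--constraint system: instead it eliminates the pressure entirely by composing with the pulled-back Leray projection $\PP_Z$, reducing \eqref{eq:qcrit} to the single $L^r$-valued equation $\Phi(Z)=\PP_Z(I-a\Delta)(Z-S)=0$ posed on the nonlinear constraint manifold $\mathcal M=W^{2,r}_{\id}(\Omega)\cap\mathcal D{\rm iff}_{\id}(\Omega)$. The Ekeland argument in the paper then runs along $\mathcal M$: descent directions are of the form $u\circ Z_\lambda$ with $u$ solving the \emph{zero-divergence} Stokes resolvent problem (Lemma~\ref{stokes}), and crucially the starting point $\tilde Z$ is taken to be the time-$1$ flow of $u^*$ (Proposition~\ref{prop:reference_point}), which is \emph{exactly} volume-preserving, so there is no determinant residual at all to control. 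You instead retain $q$ as an unknown, fold the constraint $\det(DZ)-1=0$ into the range of the map, and start from $(u^*,0)$, which is \emph{not} measure-preserving; the approximate inverse you need is the full generalized Stokes resolvent $\mathcal A(w,\pi)=\bigl((I-a\Delta)w+\nabla\pi,\ \div w\bigr)$ with nonzero prescribed divergence.

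Your route is plausible and the exponent bookkeeping at the end is consistent with the hypotheses of Theorem~\ref{thm:main_2}, but you are importing two substantial technical ingredients that the paper deliberately avoids. First, you need the $a$-uniform resolvent estimate for $\mathcal A$ including the inhomogeneous divergence slot in the mixed norm $\norm{h}_{W^{-1,r}}+a\norm{\nabla h}_{L^r}$; the paper only ever uses the homogeneous case (Farwig--Sohr, Lemma~\ref{stokes}), and your dilation heuristic (``Stokes theory on a growing domain is uniform because of the zeroth-order term'') conceals the question of whether a Bogovskii-type operator can be chosen with the correct $a$-independent bounds in both the $W^{-1,r}\to L^r$ and the $W^{1,r}\to W^{2,r}$ slots simultaneously; this needs to be argued, not asserted. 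Second, you need the compensated-compactness bound $\norm{\det(I+Du^*)-1}_{W^{-1,r}}\lesssim\norm{u^*}_{L^\infty}\norm{Du^*}_{L^r}$, together with the cubic term in $d=3$; this is the step the paper's flow construction renders unnecessary. Third, you assert without computation that $D\Phi$ is close to $\mathcal A$ in operator norm on the ball --- a check is needed on the $q$-slot term $(\cof(I+Du^*)-I)\nabla\dot q$ and on the constraint slot $(\cof(I+Du^*)-I):D\dot w$, including its $a$-weighted $W^{1,r}$ norm, and one must verify that the smallness one obtains (of order $a^{-\frac{d+r}{2r}}\delta$) is indeed implied by the hypotheses. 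None of these is a fatal objection, but each is a nontrivial lemma the paper sidesteps by keeping the Ekeland trajectory on $\mathcal M$ and solving only the zero-divergence Stokes problem. The trade-off: your formulation is closer to the standard implicit-function-theorem template and avoids the appendix construction of flows on $\mathcal D{\rm iff}_{\id}(\Omega)$; the paper's formulation eliminates the inhomogeneous divergence estimate and the Jacobian-residual estimate at the cost of the base-change gymnastics with $\PP_Z$ and the explicit flow of $u^*$.
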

 \begin{remark}
The constants in above result can be derived explicitly from our arguments in Section \ref{sec:main_estimates}.
 \end{remark}
  \begin{remark}
One may wonder if a similar result holds when $W^{2,r}(\Omega)$ is replaced by a different Banach space $X$ of maps from $\Omega$ to $\Omega$ (if $S$ is close to the identity in $X$, can one find a critical point $Z^*\in X$?). In our argument we use three crucial properties of $W^{2,r}(\Omega)$, namely that $W^{2,r}(\Omega)$ embeds into $W^{1,\infty}(\Omega)$,   it is closed with respect to composition, and that $\Delta^{-1}\partial^2_{ij}$ is a bounded operator from $W^{2,r}(\Omega)$ to itself.  We suspect that this result would hold for any space $X$ with those three properties. However, we anticipate that the argument would need to overcome additional technical difficulties if $X$ is a space weaker than $W^{2,r}(\Omega)$. 
 \end{remark}

 In general, it is not so simple to estimate $\delta'$ from the data $S$ alone.  Nonetheless, $\delta'$ must always be bounded by a constant multiple of $\delta$ (c.f. Lemma \ref{stokes}), hence, one can also restate Theorem \ref{thm:main_2} in the following simpler but weaker form. 

 \begin{thm}\label{thm:main_1} 
 Suppose that $r>d\in \{2, 3\}$.
Define $\delta$ as in Theorem \ref{thm:main_2}.
 If $\delta$, $a^{-\frac{d+r}{2r}}\delta$ and $a^{-1}\delta$  are sufficiently small,
 then there exists $Z^*\in W^{2,r}(\Omega)$ and $q^*\in W^{1, r}(\Omega)$ that satisfy the hypotheses of Theorem \ref{thm:unique}.
 \end{thm}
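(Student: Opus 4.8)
The plan is to derive Theorem \ref{thm:main_1} directly from Theorem \ref{thm:main_2}: the three smallness assumptions of Theorem \ref{thm:main_1} already force the four smallness assumptions of Theorem \ref{thm:main_2}. The only genuine ingredient is the comparison $\delta'\lesssim_{d,r}\delta$ recorded just before the statement; the rest is bookkeeping of the parameters.

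First I would recall why $\delta'$ is controlled by $\delta$. By definition, $u^*$ in \eqref{eq:intro_lin_proj} minimizes a coercive quadratic functional over the divergence-free fields of $H^1_0(\Omega)$. Writing out its Euler--Lagrange equation and integrating by parts shows that $(I-a\Delta)(S-\id-u^*)$ is $L^2$-orthogonal to every divergence-free element of $H^1_0(\Omega)$; hence it is a gradient $\nabla\pi$, and $(u^*,\pi)$ solves the generalized Stokes system
\[
(I-a\Delta)u^*+\nabla\pi=(I-a\Delta)(S-\id),\qquad \nabla\cdot u^*=0,\qquad u^*|_{\partial\Omega}=0.
\]
The $L^r$ estimate for this system is exactly the content of Lemma \ref{stokes}, which provides a constant $C=C(d,r,\Omega)$, independent of $a$, such that $\delta'=\norm{\nabla\pi}_{L^r(\Omega)}\le C\delta$.

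With this in hand, the four quantities appearing in Theorem \ref{thm:main_2} are handled as follows. The quantities $\delta$ and $a^{-\frac{d+r}{2r}}\delta$ are literally among the hypotheses of Theorem \ref{thm:main_1}. Next, $a^{-1}\delta'\le C\,a^{-1}\delta$, which is small because $a^{-1}\delta$ is. Finally, by the algebraic identity recorded in Theorem \ref{thm:main_2} itself, $a^{-\frac{d+3r}{2r}}\delta^2=\big(a^{-\frac{d+r}{2r}}\delta\big)\big(a^{-1}\delta\big)$ is a product of two quantities that are small by hypothesis, hence small. So, after shrinking the smallness thresholds of Theorem \ref{thm:main_1} by a factor depending only on $C$ and on the thresholds of Theorem \ref{thm:main_2}, all the assumptions of Theorem \ref{thm:main_2} are in force; applying it produces a pair $(Z^*,q^*)\in W^{2,r}(\Omega)\times W^{1,r}(\Omega)$ satisfying the hypotheses of Theorem \ref{thm:unique}, which is exactly the assertion of Theorem \ref{thm:main_1}.

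The main (indeed only) obstacle is Lemma \ref{stokes}, i.e.\ the $a$-independent $L^r$ bound on the pressure in the generalized Stokes system, which rests on Calder\'on--Zygmund theory for the Stokes operator; once that is available, Theorem \ref{thm:main_1} is a one-line consequence of Theorem \ref{thm:main_2}.
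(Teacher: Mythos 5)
Your proof is correct and follows the paper's own (implicit) argument: the paper states, just before Theorem~\ref{thm:main_1}, that $\delta'$ is always bounded by a constant multiple of $\delta$ via Lemma~\ref{stokes}, and then treats the result as an immediate corollary of Theorem~\ref{thm:main_2}. Your identification of $(I-a\Delta)(S-\id-u^*)$ as a pure gradient through the Euler--Lagrange equation of \eqref{eq:intro_lin_proj}, the subsequent application of the $a$-uniform estimate \eqref{eqn:sobolev3} to bound $\delta'$ by $\bar{C}_r\delta$, and the algebraic check that $a^{-\frac{d+3r}{2r}}\delta^2=(a^{-\frac{d+r}{2r}}\delta)(a^{-1}\delta)$ together reproduce exactly the reduction the paper intends (cf. also \eqref{eq:linearized_projectionII}--\eqref{eq:linearized_projectionII2}).
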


Let us briefly discuss previously known results in the literature.   By appealing to abstract results in convex duality \cite{Preiss}, one can deduce that there exists a dense set $D\subset H^1(\Omega)$ such that the $H^1$ projection problem has at most one minimizer when $S\in D$.  Unfortunately, there is no known characterization of this set beyond its denseness, which limits its practical usefulness (for instance the interior of this set may be empty).  Furthermore, this result is silent on the question of existence.  In contrast, our result shows that a unique solution exists for maps $S$ in an entire ball around the identity in $W^{2,r}(\Omega)$.

Several authors have considered existence and uniqueness of minimizers to polyconvex problems of the form (\ref{eq:intro_functional}) in more concrete settings.  In three dimensions,  \cite{LeDret} studied the existence of regular critical points to (\ref{eq:intro_functional}) in the incompressible case $\mathcal{U}=\mathcal{D}\textrm{iff}_{\id}(\Omega)$ under the assumption that the applied force $F$ was small in an appropriate space.   Building on this, in \cite{Zhang}, Zhang showed that  when $j$ has the form  
\begin{equation}\label{eq:april5.2020.1}
j(x, M)= G(x, M)+ b\Big(|M|^r+ |{\rm cof}(M)|^s \Big), 
\end{equation}
for some polyconvex function $G:\Omega\times \RR^{d\times d}\to\RR$ and some parameters $b>0$, $r \geq 2$ and $s \geq r/(r-1)$, then the critical points from \cite{LeDret} are in fact the unique global minimizers of (\ref{eq:intro_functional}) provided that they satisfy certain norm bounds. The presence of the term $|{\rm cof}(M)|^s$ in \eqref{eq:april5.2020.1} was essential for \cite{Zhang}, where ${\rm cof}(M)$ denotes the cofactor matrix of $M$.   The resulting bound on the cofactor matrix allows a much better control over determinants, thanks to the fundamental identity $M^T\cof(M)=\det(M)I$.  Indeed, conditions in \cite{Zhang} imply that the determinant map $Z\mapsto \det(DZ)$ is weakly continuous along any minimizing seqnuence for (\ref{eq:intro_functional}).  Hence, the existence of minimizers for such functionals follows from the standard theory.   Clearly, these results do not apply to the $H^1$ projection problem:  the projection functional (\ref{eq:h1_proj}) does not afford any control on the cofactor matrix of $DZ$.

In \cite{GhoussoubKLP}, the notion of $\lambda$-convexity \cite{lambda} is used to provide a sufficient condition for a critical point of (\ref{eq:intro_functional}) to be the unique global minimizer of the problem.  While their result could be applied to the $H^1$-projection problem, it requires strong bounds on the optimal Lagrange multiplier $q^*$.  In particular,  it is necessary that the eigenvalues of $D^2 q^*$ are uniformly bounded from below.    Since there is no apparent mechanism in the $H^1$ projection problem that encourages $\lambda$-convexity of $q^*$, it seems unlikely that such a property can be obtained without showing that $q^*\in W^{2,\infty}(\Omega)$.  This is two full derivatives stronger than our condition, and hence, considerably more difficult to satisfy.  

\subsection{Applications to Navier-Stokes}

Finally, in the last section of this paper, we use the $H^1$ projection problem to construct solutions to no-slip Navier-Stokes equations:
\begin{equation}\label{eq:NSE}
\partial_t v - \mu\Delta v + v\cdot \nabla v + \nabla p = 0, \quad \nabla\cdot v = 0\,\, \hbox{ in } \Omega \times (0,T), \quad v=0\hbox{ on } \partial\Omega \times  (0,T),
\end{equation} 
with initial data $v_0 \in L^r(\Omega)$.

  For technical reasons, we shall use a slightly different scheme than the one given in (\ref{eq:proj_1}). 
Given an initial divergence free velocity $v_0$ and a time step $\tau$, we will construct discrete-in-time approximations to the Navier-Stokes equations using the $H^1$ projection problem by iterating  the following scheme:
\begin{equation}\label{eq:ns_scheme_step_1_intro}
(I-\mt\Delta)S_{n+1,\tau}=\id+\tau v_{n,\tau}, \quad S_{n+1,\tau}|_{\partial\Omega}=\id;
\end{equation}
  \begin{equation}\label{eq:ns_scheme_step_2_intro}
Z_{n+1,\tau}\in \argmin_{Z\in \mathcal{D}\textrm{iff}_{\id}(\Omega)} \frac{1}{2}\norm{Z-S_{n+1,\tau}}_{L^2(\Omega)}^2+\frac{\mt}{2}\norm{DZ-DS_{n+1}}_{L^2(\Omega)}^2;
\end{equation}
\begin{equation}\label{eq:ns_scheme_step_3_intro}
w_{n+1,\tau}:=  Z_{n+1\, \#}v_{n,\tau};
\end{equation}
\begin{equation}\label{eq:ns_scheme_step_4_intro}
v_{n+1,\tau}:= e^{-\mt \mathcal{A}}w_{n+1}.
\end{equation}
Here $\mathcal{A}$ is the so called \emph{Stokes operator}, and $v_{n+1,\tau}$ is the solution to the parabolic equation
\[
\partial_t v+\mathcal{A}v=0, \quad \nabla \cdot v=0, \quad v|_{\partial\Omega}=0,
\]
at time $\tau$ starting from the initial data $w_{n+1,\tau}$.  Using the scheme, we shall also define the Lagrangian flow maps
\[
X_{n+1,\tau}=Z_{n+1,\tau}\circ X_{n,\tau}.
\]
Our main result on Navier-Stokes can then be summarized as follows.
\begin{thm}\label{thm:NSE}
Let $v_0\in L^r(\Omega)$ with $r>d\in \{2,3\}$. Then there exists a time $T^*>0$ depending only on $\norm{v_0}_{L^r(\Omega)}, r, d$ and the viscosity $\mu$ in \eqref{eq:NSE} such that the following holds: 
 \begin{itemize}
 \item[(a)] The scheme (\ref{eq:ns_scheme_step_1_intro}-\ref{eq:ns_scheme_step_4_intro}) is well-defined and generates discrete velocities $v_{n,\tau}$ that are uniformly bounded in $L^r(\Omega)$ for $0\leq n\tau \leq T^*$.
 \item[(b)] The discrete velocities  converge in $L^2([0,T^*]\times \Omega))$ as $\tau\to 0$ to the unique mild solution $v\in L^{\infty}([0,T^*], L^r(\Omega))\cap L^1((0,T^*];W^{1,\infty}_0(\Omega))$ of the Eulerian Navier-Stokes equation \eqref{eq:NSE}.
 \item[(c)] The discrete Lagrangian flow maps converge  in $L^1([0,T^*]\times \Omega)$ as $\tau \to 0$ to the unique solution of the Lagrangian Navier-Stokes equation \eqref{eq:lagrangian_ns}. 
 \end{itemize}
 \end{thm}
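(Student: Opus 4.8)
\textbf{Proof strategy for Theorem \ref{thm:NSE}.}

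The plan is to combine the quantitative well-posedness theory of the $H^1$ projection problem (Theorems \ref{thm:unique}--\ref{thm:main_1}) with classical mild-solution estimates for the Stokes semigroup, via a compactness argument in $L^2([0,T^*]\times\Omega)$. I would organize the proof around a single bootstrap/a-priori estimate: show that there is a time horizon $T^*$, depending only on $\norm{v_0}_{L^r}$, $r$, $d$, $\mu$, such that for all sufficiently small $\tau$ the discrete velocities stay in a fixed ball $\mathcal K=\{\norm{v}_{L^r(\Omega)}\le R\}$ for $0\le n\tau\le T^*$, and such that each projection step (\ref{eq:ns_scheme_step_2_intro}) falls within the perturbative regime where Theorems \ref{thm:main_1} and \ref{thm:unique} apply (so that $Z_{n+1,\tau}$ exists, is unique, and lies in $W^{2,r}_{\id}(\Omega)$ with controlled distance to the identity). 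The key observation making this feasible is that with $a=\mu\tau$ and $S_{n+1,\tau}=(I-\mu\tau\Delta)^{-1}(\id+\tau v_{n,\tau})$, one has $(I-a\Delta)(S_{n+1,\tau}-\id)=\tau v_{n,\tau}$, hence $\delta=\tau\norm{v_{n,\tau}}_{L^r(\Omega)}\le \tau R$; then $a^{-1}\delta=\mu^{-1}\norm{v_{n,\tau}}_{L^r}\le \mu^{-1}R$ is bounded but \emph{not} small, so one cannot apply Theorem \ref{thm:main_1} directly --- this is the first technical point to address (see below).

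\emph{Step 1: one-step estimates.} Given $v_{n,\tau}\in\mathcal K$, solve (\ref{eq:ns_scheme_step_1_intro}) and check the smallness hypotheses of the existence theorem. Since $a^{-1}\delta$ is only bounded, not small, I would instead invoke Theorem \ref{thm:main_2} using $\delta'$: by the Stokes estimate (Lemma \ref{stokes}) together with the fact that subtracting the Leray--Stokes projection $u^*$ removes the ``large'' gradient part, $\delta'=\norm{(I-a\Delta)(S_{n+1,\tau}-\id-u^*)}_{L^r}$ should be controlled by $a$ times a norm of $v_{n,\tau}$ that \emph{is} small after multiplication by $\tau$ (morally $\delta'\lesssim \tau\,\norm{\nabla v_{n,\tau}\text{-type quantity}}$, or at least $a^{-1}\delta'$ small because the residual after projection is lower order). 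Granting this, Theorem \ref{thm:main_2} yields $Z_{n+1,\tau}\in W^{2,r}_{\id}(\Omega)$, uniqueness via Theorem \ref{thm:unique} (the singular values of $DZ_{n+1,\tau}$ are $\sigma$-bounded below and $\norm{q^*-\hat q}_{L^\infty}$ is small because $Z_{n+1,\tau}$ is $W^{2,r}$-close to $\id$), and the bound $\norm{(I-a\Delta)(Z_{n+1,\tau}-S_{n+1,\tau})}_{L^r}\lesssim \delta'+a^{-\frac{d+r}{2r}}\delta^2$. Pushing $v_{n,\tau}$ forward by $Z_{n+1,\tau}$ as in (\ref{eq:ns_scheme_step_3_intro}) costs a factor controlled by $\norm{DZ_{n+1,\tau}-I}_{L^\infty}$ (here $W^{2,r}\hookrightarrow W^{1,\infty}$ is used), giving $\norm{w_{n+1,\tau}}_{L^r}\le (1+C\tau\,\omega(R))\norm{v_{n,\tau}}_{L^r}$ for some modulus $\omega$; and the Stokes semigroup in (\ref{eq:ns_scheme_step_4_intro}) is a contraction on $L^r_\sigma(\Omega)$, so $\norm{v_{n+1,\tau}}_{L^r}\le\norm{w_{n+1,\tau}}_{L^r}$. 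Iterating, $\norm{v_{n,\tau}}_{L^r}\le e^{C\omega(R)\,n\tau}\norm{v_0}_{L^r}$, which stays below $R=2\norm{v_0}_{L^r}$ for $n\tau\le T^*$ with $T^*=T^*(\norm{v_0}_{L^r},r,d,\mu)$. This closes the a-priori bound and proves (a). To sharpen the consistency one also needs, via (\ref{eq:crit}), that the discrete pressure/Lagrange multiplier $p_{n+1,\tau}$ satisfies $\nabla p_{n+1,\tau}\approx -\mu\,(I-\mu\tau\Delta)^{-1}(\ldots)$-type bounds uniformly; these follow from the $W^{1,r}$ control on $q^*$.

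\emph{Step 2: compactness and identification of the limit.} Define the piecewise-constant (and piecewise-affine) interpolants $v_\tau(t),X_\tau(t)$. From Step 1 they are uniformly bounded in $L^\infty([0,T^*];L^r(\Omega))$; the Stokes smoothing (\ref{eq:ns_scheme_step_4_intro}) gives, as in the classical theory, $\int_0^{T^*}\norm{v_{n,\tau}}_{W^{1,\infty}_0}\,dt\lesssim 1$ uniformly, hence $v_\tau$ is bounded in $L^1((0,T^*];W^{1,\infty}_0(\Omega))$; and the scheme provides a uniform estimate on a discrete time-derivative of $v_\tau$ in a negative-order space (from $\partial_t v+\mathcal A v=0$ on each subinterval plus the transport correction of size $O(\tau)$). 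By Aubin--Lions one extracts $v_\tau\to v$ strongly in $L^2([0,T^*]\times\Omega)$ along a subsequence. One then passes to the limit in the weak formulation: the linear terms are immediate from the semigroup convergence $e^{-\mu\tau\mathcal A}\to\id$; the nonlinear term $v\cdot\nabla v$ arises from the interplay of the push-forward step (which to leading order transports $v_{n,\tau}$ along its own flow, contributing $-\tau\,v_{n,\tau}\cdot\nabla v_{n,\tau}+o(\tau)$ by a Taylor expansion of $Z_{n+1,\tau}=\id+\tau v_{n,\tau}+$ correction) and the residual $\nabla p(Z)$ term, using the one-step error bound from Theorem \ref{thm:main_2} to show the corrections are $o(\tau)$ in the relevant norm; strong $L^2$ convergence of $v_\tau$ together with the $L^1_t W^{1,\infty}_x$ bound is exactly what is needed to pass to the limit in $v_\tau\cdot\nabla v_\tau$. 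The limit $v$ is therefore a mild solution in $L^\infty([0,T^*];L^r)\cap L^1((0,T^*];W^{1,\infty}_0)$; uniqueness in this class is standard (Fujita--Kato / Giga--Miyakawa type) and yields convergence of the full family, proving (b). For (c), the flow maps $X_{n+1,\tau}=Z_{n+1,\tau}\circ X_{n,\tau}$ inherit uniform $W^{1,\infty}$-in-space bounds from the $W^{2,r}$ control on each $Z_{n+1,\tau}$; since $\dot X=v(X)$ in the limit with $v\in L^1_t W^{1,\infty}_x$, the ODE flow is well-defined and unique, and a Gronwall/stability estimate upgrades the $L^2$ convergence of $v_\tau$ to $L^1([0,T^*]\times\Omega)$ convergence of $X_\tau$, giving (c).

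\emph{Main obstacle.} The crux is Step 1 in the regime $a=\mu\tau\to 0$: the naive smallness parameter $a^{-1}\delta\sim\mu^{-1}\norm{v_{n,\tau}}_{L^r}$ is \emph{not} small, so the perturbative existence theory cannot be applied off the shelf, and one must exploit the Leray projection structure --- i.e. use the $\delta'$ formulation of Theorem \ref{thm:main_2} and show that $a^{-1}\delta'$ and $a^{-\frac{d+3r}{2r}}\delta^2$ really are small uniformly in $\tau$ for $v_{n,\tau}$ in a fixed $L^r$ ball (this is where Lemma \ref{stokes} and the precise $\tau$-scaling of the four smallness quantities in Theorem \ref{thm:main_2} must be tracked carefully). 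The second delicate point is the consistency of the push-forward/Stokes-splitting with the actual Navier-Stokes nonlinearity: one must verify that the scheme's $O(\tau)$ correction to $Z_{n+1,\tau}=\id+\tau v_{n,\tau}$ is precisely the pressure gradient needed to produce $-\nabla p$ in the limit, and that all remaining one-step errors are genuinely $o(\tau)$ uniformly, which again relies on the quantitative bound $\norm{(I-a\Delta)(Z^*-S)}_{L^r}\lesssim\delta'+a^{-\frac{d+r}{2r}}\delta^2$ from Theorem \ref{thm:main_2} rather than a merely qualitative existence statement.
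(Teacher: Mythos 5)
Your identification of the main obstacle at the level of the existence theorem is on target: one cannot apply Theorem \ref{thm:main_1} directly because $a^{-1}\delta = \mu^{-1}\norm{v_{n,\tau}}_{L^r(\Omega)}$ is only bounded, and one must instead invoke Theorem \ref{thm:main_2} via $\delta'$ (the paper arranges the scheme so that $\delta'$ vanishes because $v_{n,\tau}$ is divergence-free, so $S_{n+1,\tau}-\id$ is already the linear projection $u^*$). The outlines of Steps (b) and (c) --- Aubin--Lions compactness, passage to the mild formulation, Gronwall for the flow --- are also broadly consistent with the paper (Propositions \ref{prop:precompactness}, Theorems \ref{thm:conv_1}--\ref{thm:conv_2}).

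However, the heart of Step 1 as you have written it is incorrect, and this is precisely the point the paper flags and resolves with a different mechanism. You claim a one-step bound of the form $\norm{v_{n+1,\tau}}_{L^r}\le(1+C\tau\,\omega(R))\norm{v_{n,\tau}}_{L^r}$, then iterate to get a Gronwall-type exponential. Two things go wrong. First, the push-forward $w_{n+1,\tau}=v_{n,\tau}\circ Z_{n+1,\tau}^{-1}$ is an exact $L^r$-isometry by volume preservation, but it destroys the divergence-free constraint, so the Stokes step $e^{-\mt\mathcal{A}}w_{n+1,\tau}$ implicitly applies $\PP$, and neither $\PP$ nor the Stokes semigroup is known to be a contraction (or even $1+O(\tau)$-close to one in operator norm) on $L^r(\Omega)$ for $r\ne 2$; the claim ``the Stokes semigroup is a contraction on $L^r_\sigma$'' is unsubstantiated and the paper never uses it. Second, and more fundamentally, the one-step perturbation of $Z_{n+1,\tau}$ from the identity is not $O(\tau)$: by Lemma \ref{lem:B_and_K_estimates} with $a=\mt$ one has $K_a\lesssim (\mt)^{-\frac{d+r}{2r}}$, so $\norm{DZ_{n+1,\tau}-I}_{L^\infty(\Omega)}\lesssim \mu^{-\frac{d+r}{2r}}\tau^{\frac{r-d}{2r}}\norm{v_{n,\tau}}_{L^r(\Omega)}$ and likewise the error in Proposition \ref{prop:scheme_ekeland_estimates} is of order $\tau^{\frac{r-d}{2r}}\norm{v_{n,\tau}}^2_{L^r}$. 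Since $\frac{r-d}{2r}<1$, summing such errors over $n\tau\le T$ gives $T\tau^{-\frac{d+r}{2r}}\to\infty$ as $\tau\to 0$, so the naive iteration cannot close --- and if it did, it would produce global-in-time $L^r$ bounds, which is stronger than is true. The paper explicitly records this failure after Proposition \ref{prop:scheme_ekeland_estimates} and replaces the iteration with the discrete Duhamel formula (Lemma \ref{lem:approximate_duhamel}): writing $v_{n+1,\tau}$ as $e^{-\mu(n+1)\tau\mathcal{A}}v_0$ plus a sum of transport corrections hit by $e^{-\mu(n+1-k)\tau\mathcal{A}}$, and using the parabolic smoothing estimate of Lemma \ref{lem:para_est_0} together with the one-step bounds to show the Duhamel sum is bounded on a short time horizon via a discrete nonlinear Gronwall argument (Lemma \ref{lem:duhamel_velocity_bound}, Proposition \ref{prop:critical_time}). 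This Duhamel step --- which leverages viscous smoothing \emph{across} time steps rather than per step --- is the key missing idea in your outline and is essential to obtaining the uniform $L^r$ bound in part (a).
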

The solution $v$ and $X$ in above theorem satisfies stronger regularity properties than those listed above: see Remark \ref{eq:last-remark}.

\section{A new sufficient conditions for being a minimizer: proof of Theorem \ref{thm:unique}}

To prove Theorem \ref{thm:unique}, we will show that our assumption on the pair $(Z^*, q^*)$ guarantees that $J_a$ lies above a convex parabola centered at $Z^*$ and touches the parabola at $Z^*$.  In other words, we will obtain the inequality $J_a(Z)\geq J_a(Z^*)+\frac{1}{2}\norm{Z-Z^*}_{L^2(\Omega)}$, see \eqref{eq:done}.    
The argument involves two key steps. First, we relax the constraint on $\det(DZ)$ by introducing $q^*$ as a Lagrange multiplier, see (\ref{eq:relaxation}).   We then show that the non-convexity of the Lagrange multiplier term $\int_{\Omega} q^*|\det(DZ)|$ is dominated by the quadratic term $\frac{a}{2}\norm{DZ-DS}_{L^2(\Omega)}^2$ by establishing the Bregman divergence bound in (\ref{eq:bregman_ineq}).   This bound is the consequence of an interesting matrix inequality that holds in dimensions 2 and 3 (Lemma \ref{lem:matrix_inequality}).  Though the matrix inequality is elementary, it plays an essential role in our argument that we believe is worth highlighting.

\begin{lemma}\label{lem:matrix_inequality}
Let $M, A\in \RR^{d\times d}$.
In two dimensions,
\[
\Big(\sgn(\det(A))\cof(A)-\sgn(\det(M))\cof(M)\Big):(A-M)\geq -\frac{|\det(M)|}{\sigma^2}|A-M|^2.
\]
In three dimensions,  
\[
\Big(\sgn(\det(A))\cof(A)-\sgn(\det(M))\cof(M)\Big):(A-M)\geq -\big(1+\frac{3}{2}(1+\sqrt{3})\big) \frac{|\det(M)|}{\sigma^2}|A-M|^2
\]
where $\sigma$ is the smallest singular value of $M$. 
\end{lemma}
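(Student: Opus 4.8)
The plan is to recognize the left-hand side as the monotonicity defect $\big(\nabla f(A)-\nabla f(M)\big):(A-M)$ of the polyconvex function $f(X)=|\det X|$ and to establish the stated quadratic lower bound after a normalization. Both sides of the claimed inequality are homogeneous of degree $d$ under $(A,M)\mapsto(tA,tM)$, are invariant under $(A,M)\mapsto(UAV,UMV)$ for orthogonal $U,V$ (because $\cof(UAV)=\det(U)\det(V)\,U\cof(A)V$, so $\sgn(\det(UAV))\cof(UAV)=U\,\sgn(\det A)\cof(A)\,V$), and are invariant under $(A,M)\mapsto(-A,-M)$, which flips the sign of $\det M$ when $d$ is odd. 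Using the singular value decomposition of $M$ together with a dilation, it therefore suffices to treat $M=\mathrm{diag}(\sigma_1,1)$ in $d=2$ and $M=\mathrm{diag}(\sigma_1,\sigma_2,1)$ in $d=3$, with all $\sigma_i\ge1$ and, in the three-dimensional case, $\det M>0$. Under this normalization one has the elementary estimates $|M|,|\cof M|\le\sqrt{d}\,\det M$, $|\det M|\ge1=\sigma^2$, and (in $d=3$) $|\cof H|\le\tfrac1{\sqrt3}|H|^2$, $|\det H|\le\tfrac1{3\sqrt3}|H|^3$, where $H:=A-M$.

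The algebra uses Euler's identity $\langle\cof K,K\rangle=d\det K$ and the Laplace expansion $\det(M+H)=\det M+\langle\cof M,H\rangle+\langle M,\cof H\rangle+\det H$ (the term $\langle M,\cof H\rangle$ being absent in $d=2$, where $\cof$ is linear), which together give $\langle\cof(M+H),H\rangle=\langle\cof M,H\rangle+2\langle M,\cof H\rangle+3\det H$ in dimension three. Splitting according to the sign of $\det A$, these identities let me compute the left-hand side exactly: in the aligned case $\det A\ge0$ it equals $2\langle M,\cof H\rangle+3\det H$ (just $2\det H$ when $d=2$), while in the misaligned case $\det A<0$ it equals $-2\langle\cof M,H\rangle-2\langle M,\cof H\rangle-3\det H$ (and $-2\langle\cof M,H\rangle-2\det H$ when $d=2$).

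In two dimensions this closes the argument immediately: in the aligned case $2\det H\ge-|H|^2\ge-\tfrac{|\det M|}{\sigma^2}|H|^2$; in the misaligned case the Laplace identity combined with the sign constraint on $\det A$ forces the expression to be strictly positive. In three dimensions the aligned case $\det A\ge0$ is the crux, the obstruction being the cubic term $3\det H$. For $|H|$ in a bounded range I bound this term directly: $2\langle M,\cof H\rangle+3\det H\ge-2\det M|H|^2-\tfrac1{\sqrt3}|H|^3$, which is at least $-c\,\det M|H|^2$ once $|H|\le\sqrt3(c-2)$ (here $\det M\ge1$ is used). For large $|H|$ I instead use the sign constraint $\det(M+H)\ge0$, i.e. $3\det H\ge-3\det M-3\langle\cof M,H\rangle-3\langle M,\cof H\rangle$; substituting this into the exact expression, the dangerous $\langle M,\cof H\rangle$ term partly cancels, leaving $-\langle M,\cof H\rangle-3\det M-3\langle\cof M,H\rangle\ge-\det M\big(|H|^2+3\sqrt3|H|+3\big)$, which is at least $-c\,\det M|H|^2$ as soon as $|H|$ exceeds a fixed threshold. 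With $c=1+\tfrac32(1+\sqrt3)$ the two ranges overlap, so all $H$ are covered. The misaligned three-dimensional case is cheaper: multiplying $\det(M+H)<0$ by $-3$ and substituting produces the lower bound $\langle\cof M,H\rangle+\langle M,\cof H\rangle+3\det M\ge\det M\big(3-\sqrt3|H|-|H|^2\big)$ for the left-hand side, and $(c-1)t^2-\sqrt3\,t+3\ge0$ holds for every $t\ge0$ once $c\ge\tfrac54$.

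The main obstacle is exactly the control of $3\det H$ in the three-dimensional aligned case: no pointwise estimate on $\cof$ or $\det$ alone dominates it by a multiple of $|H|^2$, so one is forced to play the crude cubic bound (effective only for $|H|$ bounded) against the bound extracted from the sign of $\det(M+H)$ (effective for $|H|$ large), and the constant $1+\tfrac32(1+\sqrt3)$ is what drops out of matching these two regimes with the Frobenius estimates above. The one point that makes the matching actually work is keeping the $2\langle M,\cof H\rangle$ term intact when the sign constraint is invoked, so that the partial cancellation noted above is available; estimating that term separately would leave a gap between the two regimes.
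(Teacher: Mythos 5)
Your proof is correct, and it takes a genuinely different route from the paper. The paper factors $A=BM$, rewrites the left-hand side as $|\det M|\big(\sgn(\det B)\cof(B)-I\big):(B-I)$, and then studies the scalar function $f(B)=c|B-I|^2+d|\det B|+d-\tr B-|\det B|\tr(B^{-1})$ as a function of the eigenvalues of $B$: it locates the minimizer by a coercivity argument, confines it to a compact box, and verifies nonnegativity there via diagonal dominance of the Hessian. You instead normalize $M$ only (SVD plus a dilation and sign flip, reducing to $M$ diagonal with smallest entry $1$, so $\sigma=1$ and $|\det M|\geq 1$), compute the left-hand side \emph{exactly} via the Laplace expansion and Euler's identity in terms of $\langle\cof M,H\rangle$, $\langle M,\cof H\rangle$, and $\det H$ with $H=A-M$, and then split on the sign of $\det A$ and on the size of $|H|$. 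The key idea that replaces the paper's Hessian analysis is your use of the sign constraint $\det(M+H)\gtrless 0$ as an inequality that can be substituted for the cubic term $3\det H$: in the aligned case it closes the large-$|H|$ regime (where the crude bound $|\det H|\leq\tfrac{1}{3\sqrt3}|H|^3$ is useless), and in the misaligned case it makes the whole expression bounded below by a quadratic with negative discriminant. The normalization $\sigma_i\geq 1$ is what validates the Frobenius estimates $|M|,|\cof M|\leq\sqrt d\,\det M$ that you need throughout. The one imprecision is the closing remark that the constant $1+\tfrac32(1+\sqrt3)$ ``drops out of matching these two regimes'': with your estimates the two ranges actually overlap with considerable slack (the small-$|H|$ regime extends to $|H|\leq\tfrac{9+\sqrt3}{2}\approx 5.37$ while the large-$|H|$ regime already kicks in around $|H|\approx 1.70$), so your argument in fact proves the inequality with a somewhat smaller constant. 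That is harmless — the statement only requires the given constant — but it is the paper's eigenvalue route, not yours, that pins $1+\tfrac32(1+\sqrt3)$ down as the exact threshold of its own argument. Both approaches, like the paper's, silently assume $M$ nonsingular and $\det A\neq 0$ and rely on density for the boundary cases.
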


\begin{remark}\label{rem:determinant1} 

 As a polynomial of degree $d$, the restriction of the determinant function to any convex subset of the set of $\RR^{d\times d}$, is $\lambda$-convex.  When $d=2$, we can choose $\lambda=-1$ independently of the convex set. This means, 
\[
\bigl( \cof(A)- \cof(M) \bigr): (A-M) \geq -|A-M|^2 \qquad A, M \in \RR^{2\times 2}.
\]
\end{remark}
\begin{remark}
 There is no analogous inequality when $d\geq 4$.  Indeed, if we choose $M=I$ and $A=\alpha I-(\alpha-\alpha^{1-d}) \bm{e}_d\otimes \bm{e}_d$, then as $\alpha\to \infty$, the left-hand-side of the inequality scales like $-\alpha^{d-1}$ while the right-hand-side scales like $-\alpha^2$. 
\end{remark}
\begin{proof}
Thanks to the density of diagonalizable nonsingular matrices, we can assume without loss of generality that $A$ and $M$ are nonsingular and diagonalizable.
Thus, we can factor $A=BM$ for some matrix $B$.  
We then have 
\[
\Big(\sgn(\det(A))\cof(A)-\sgn(\det(M))\cof(M)\Big):(A-M)=|\det(M)|\Big(\sgn(\det(B))\cof(B)-I\Big):(B-I).
\]
Expanding out the product and using the fact that $\cof(B)^T=\det(B)B^{-1}$, the right-hand-side is equal to
\[
|\det(M)|\big( d|\det(B)|+d-\tr(B)-|\det(B)|\tr(B^{-1})\big).
\]

Since $\sigma$ is the smallest singular value of $M$, we have
\[
|A-M|^2\geq \sigma^2|B-I|^2.
\]
Hence, given any constant $c>0$, we obtain 
\[
\Big(\sgn(\det(A))\cof(A)-\sgn(\det(M))\cof(M)\Big):(A-M) + c\frac{|\det(M)|}{\sigma^2}|A-M|^2\geq
\]
\[
|\det(M)|\big(c|B-I|^2+ d|\det(B)|+d-\tr(B)-|\det(B)|\tr(B^{-1})\big).
\]
It is now clear that the lower bound only depends on $\det(M)$ and the eigenvalues of $B$.  

Let us now show that 
\[
f(B):=c|B-I|^2+ d|\det(B)|+d-\tr(B)-|\det(B)|\tr(B^{-1})
\]
is nonnegative once $c$ is sufficiently large.  It is clear that $f$ is a function of the eigenvalues of $B$ and that it is minimized when the eigenvalues are nonnegative.  Therefore, we shall assume that $B$ is a diagonal matrix with nonnegative eigenvalues in the rest of the argument.

In two dimensions, $|\det(B)|\tr(B^{-1})=\tr(B)$.  It is then easy to check that $f$ has a single critical point at $B=I$.  When $c> 1$, $f$ is coercive, thus, $I$ must be the unique minimizer when restricted to diagonal matrices.  Since $f(I)=0$, we obtain the desired inequality in two dimensions by letting $c\to 1$.

In three dimensions, we have the inequality 
\[
|\det(B)|\tr(B^{-1})\leq |B-I|^2+2\tr(B)-3.
\]
Thus, 
\[
f(B)\geq g(B):=(c-1)|B-I|^2+3|\det(B)|-3\tr(B)+6.
\]
As long as $c\geq 1$, $g$ is coercive. 

Now consider $h(B):=(c-1)|B-I|^2-3\tr(B)+6$, which lies below $g$, and set  
\[
\bar h(t)=(c-1)|t-1|^2-3t+2, \qquad t \geq 0.
\]
When $c> 1$,  $h$ is  strictly convex and has a unique global minimum over diagonal matrices at its critical point $B_0:=aI$ where $a:=1+\frac{3}{2(c-1)}$. Similarly, $\bar h$ has a unique global minimizer at $a$. For 
$$B= {\rm diag}(x_1, x_2, x_3), \quad  \tilde B=:{\rm diag}\Bigl(\min\{a, x_1\}, \min\{a, x_2\}, \min\{a, x_3\}\Bigr)$$ 
with nonnegative entries, we have 
\[
h(\tilde B)= \sum_{i=1}^3\bar h\bigl(\min\{a, x_i\}\bigr) \leq \sum_{i=1}^3 \bar h(x_i)=h(B).
\]
The previous inequality is strict unless $\tilde{B}=B.$ Since $\det( \tilde B) \leq \det( B)$ we can conclude that $ g(\tilde B) \leq g(B)$ and again the latter inequality is strict unless $\tilde{B}=B.$ Therefore, any minimizer of $g$ must have eigenvalues bounded in $[0, 1+\frac{3}{2(c-1)}]$.

By direct calculation, the Hessian of $g$ is diagonally dominant when restricted to the set of diagonal matrices whose eigenvalues are bounded in $[0, \frac{c-1}{3}]$.  Thus, $g$ must be convex in this region.     This region is guaranteed to contain the minimizer of $g$ as soon as  $1+\frac{3}{2(c-1)}\leq \frac{c-1}{3}$ which is equivalent to $c\geq 1+\frac{3}{2}(1+\sqrt{3})$. In this case, the critical point of $g$ at $C=I$ must be the global minimum with value $g(I)=0$. Therefore, in three dimensions, it follows that
 \[
\Big(\sgn(\det(A))\cof(A)-\sgn(\det(M))\cof(M)\Big):(A-M) + \big(1+\frac{3}{2}(1+\sqrt{3})\big)\frac{|\det(M)|}{\sigma^2}|A-M|^2\geq 0,
\]
which is the desired result.
 \end{proof}

With the matrix inequality in hand, we can prove Proposition \ref{pro:sufficient-cond}, which establishes the uniqueness of minimizers for the Lagrangian relaxation (\ref{eq:lagrangian}).    This produces Theorem \ref{thm:unique} as an immediate consequence.

\begin{prop}\label{pro:sufficient-cond} Let $d\in \{2, 3\}$ and let
\begin{equation}\label{eq:relaxation}
\cL(Z, q):=\frac{1}{2}\norm{Z-S}_{L^2(\Omega)}^2+\frac{a}{2}\norm{DZ-DS}_{L^2(\Omega)}^2+\int_{\Omega} q(x)(|\det\big(DZ(x)\big)|-1)\, dx.
\end{equation}
Suppose that $Z^*\in \mathcal D{\rm iff}_{\id}(\Omega)$ and $q^* \in L^{\infty}(\Omega)$ solve (\ref{eq:qcrit}) and the singular values of $DZ^*$ are uniformly bounded from below by some $\sigma>0$.   Set $\bar{q}:=q^*-c$, where $c\in \RR$ is the largest constant so that $\bar{q}\geq 0$.
   If $\norm{\bar{q}}_{L^{\infty}}\leq a\sigma^2\big(1+\frac{3}{2}(1+\sqrt{3})\big)^{-1}$, then $Z^*$ is the unique global minimizer of $\cL(\cdot,\bar{q})$ among functions in $W^{1,d}_{\id}(\Omega)$. 
   \end{prop}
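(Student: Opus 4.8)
The plan is to show that $Z^*$ is the unique minimizer of $\cL(\cdot,\bar q)$ by establishing that $\cL(\cdot,\bar q)$ lies above a convex parabola centered at $Z^*$ and touching it there. Concretely, I will prove that for every $Z\in W^{1,d}_{\id}(\Omega)$,
\[
\cL(Z,\bar q)\;\geq\;\cL(Z^*,\bar q)+\frac12\norm{Z-Z^*}_{L^2(\Omega)}^2,
\]
which forces $Z^*$ to be the unique global minimizer. The first step is to decompose $\cL(Z,\bar q)-\cL(Z^*,\bar q)$ into its convex quadratic part and the determinant part. Writing out the $L^2$ and $H^1$ terms and expanding around $Z^*$, the quadratic piece contributes exactly $\frac12\norm{Z-Z^*}_{L^2(\Omega)}^2+\frac a2\norm{DZ-DZ^*}_{L^2(\Omega)}^2$ plus the first-order terms $\int_\Omega (Z^*-S)\cdot(Z-Z^*)+a\,(DZ^*-DS):(DZ-DZ^*)\,dx$. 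The determinant term contributes $\int_\Omega \bar q\,(|\det DZ|-|\det DZ^*|)\,dx$, which I split as the Bregman divergence $\mathcal B(Z,Z^*,\bar q)$ plus the linear remainder $\int_\Omega \bar q\,\sgn(\det DZ^*)\cof(DZ^*):(DZ-DZ^*)\,dx$ (using $\det DZ^*=1$, so $\sgn=1$).

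The second step is to use the critical point equation. Since $(Z^*,q^*)$ solves \eqref{eq:qcrit} and $\bar q=q^*-c$ differs from $q^*$ by a constant, the Null-Lagrangian identity $D^T\cof(DZ^*)=0$ (together with $X^{\partial\Omega}$ boundary conditions making $Z-Z^*$ admissible as a test object) shows that $D^T(\cof(DZ^*)\,c)=0$ as well, so $(Z^*,\bar q)$ also solves $(I-a\Delta)(Z^*-S)+D^T(\cof(DZ^*)\bar q)=0$. Pairing this against $Z-Z^*$ and integrating by parts (all terms are legitimate distributional pairings once $Z-Z^*\in W^{1,d}_0$ and $\bar q\in L^\infty$ with $\cof(DZ^*)\in L^{d'}$-type integrability from $Z^*\in W^{1,d}$) exactly cancels the first-order quadratic terms against the linear Bregman remainder. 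This leaves
\[
\cL(Z,\bar q)-\cL(Z^*,\bar q)\;=\;\frac12\norm{Z-Z^*}_{L^2(\Omega)}^2+\frac a2\norm{DZ-DZ^*}_{L^2(\Omega)}^2+\mathcal B(Z,Z^*,\bar q).
\]

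The third step is to absorb the Bregman term into the $H^1$ term using the inequality \eqref{eq:bregman_ineq}, which is the integrated form of Lemma \ref{lem:matrix_inequality}: since $\bar q\geq 0$ and the singular values of $DZ^*$ are $\geq\sigma$ while $\det DZ^*=1$,
\[
-\mathcal B(Z,Z^*,\bar q)\;\leq\;\frac{1+\frac32(1+\sqrt3)}{2\sigma^2}\norm{\bar q}_{L^\infty(\Omega)}\norm{DZ-DZ^*}_{L^2(\Omega)}^2.
\]
Under the hypothesis $\norm{\bar q}_{L^\infty}\leq a\sigma^2\bigl(1+\tfrac32(1+\sqrt3)\bigr)^{-1}$, the coefficient on the right is at most $\tfrac a2$, so $\mathcal B(Z,Z^*,\bar q)\geq -\tfrac a2\norm{DZ-DZ^*}_{L^2(\Omega)}^2$, and the $H^1$ terms cancel, yielding the desired parabola bound with strict inequality unless $Z=Z^*$ in $L^2$ (hence unless $Z=Z^*$, since $Z-Z^*$ has zero boundary trace).

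The main obstacle I anticipate is the rigorous justification of the integration-by-parts / duality pairing in step two at the low regularity $Z^*\in W^{1,d}$, $q^*\in L^\infty$: one must check that $\cof(DZ^*)\in L^{d/(d-1)}$ (true since each entry is a product of $d-1$ entries of $DZ^*\in L^d$), that the pairing $\langle D^T(\cof(DZ^*)\bar q),\,Z-Z^*\rangle=-\int_\Omega \bar q\,\cof(DZ^*):(DZ-DZ^*)$ is well-defined for $Z-Z^*\in W^{1,d}_0$, and that \eqref{eq:qcrit} can be tested against such competitors by density of smooth divergence-free fields — or more simply, by testing against the full variation $Z-Z^*$ without the divergence-free restriction, which is exactly what the Lagrange-multiplier form \eqref{eq:qcrit} permits. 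A secondary technical point is confirming that the pointwise matrix inequality of Lemma \ref{lem:matrix_inequality} integrates without measurability or integrability issues, which follows because $|\det DZ|$ and $\cof(DZ^*):(DZ-DZ^*)$ are both in $L^1$ under the stated hypotheses.
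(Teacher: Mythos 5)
Your proof is correct and follows essentially the same route as the paper: expand $\cL(\cdot,\bar q)$ around $Z^*$ to exhibit the quadratic parabola plus the Bregman divergence $\mathcal B(Z,Z^*,\bar q)$, observe that the first-order (linear) terms vanish because $(Z^*,q^*)$ is a critical point together with the Null-Lagrangian identity for the constant shift $c$, and then absorb the Bregman term into the $\frac a2\norm{DZ-DZ^*}_{L^2}^2$ piece using the matrix inequality of Lemma~\ref{lem:matrix_inequality}. The paper packages the first two steps as the single statement $\delta_Z\cL(Z^*,\bar q)\equiv 0$ applied to the Taylor remainder, while you spell out the cancellation term by term, but the content is the same.
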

\begin{proof}  

Let $Z$ be some arbitrary element of $W^{1,d}_{\id}(\Omega)$.
  Calculating the $Z$ variation of $\cL$, we see that 
\[
\delta_Z \cL(Z, \bar q)(\phi) =\int_{\Omega} \bigg((Z-S)\cdot \phi +\Big( DZ-DS+ \bar q\sgn(\det(DZ))\cof(DZ)\Big): D\phi\bigg) dx,
\]
where $\phi\in W^{1,d}_0(\Omega)$ is an arbitrary perturbation.
Since $(Z^*,q^*)$ solves (\ref{eq:qcrit}), it follows that $\delta_Z \cL(Z^*, \bar q) \equiv 0$.   Hence,  
\[
\cL(Z, \bar q)-\cL(Z^*, \bar q)=\cL(Z, \bar q)-\cL(Z^*, \bar q)-\delta_Z \cL(Z^*, \bar q) (Z-Z^*)=\frac{1}{2}\norm{Z-Z^*}_{L^2(\Omega)}^2+\frac{a}{2}\norm{DZ-DZ^*}_{L^2(\Omega)}^2+\mathcal{B}(Z,Z^*,\bar{q}),
\]
where we recall the definition of $\mathcal{B}(Z,Z^*,\bar{q})$ from (\ref{eq:bregman}). Applying the Fundamental Theorem of Calculus, we have
\[
\mathcal{B}(Z,Z^*,\bar{q})=\int_{\Omega} \bar q(x)\int_0^1\frac{1}{t} \Big(\sgn\big(\det(DZ_t(x))\big)\cof\big(DZ_t(x)\big)- \cof\big(DZ^*(x)\big)\Big):\big(DZ_t(x)-DZ^*(x)\big) \, dx\, dt,
\]
where $Z_t=tZ+(t-1)Z^*$ and we have used the fact that $\frac{1}{t}(DZ_t-DZ^*)=(DZ-DZ^*)$.
Now Lemma \ref{lem:matrix_inequality} combined with the $L^{\infty}$ bound on $\bar{q}$ implies that
\begin{equation}\label{envelop}
 \cL(Z, \bar q) \geq \cL(Z^*, \bar q) + {1\over 2} \norm{Z-Z^*}_{L^2(\Omega)}^2.
\end{equation}

\end{proof}

\textbf{Proof of Theorem~\ref{thm:unique}}. 

Let $Z_0$ be some arbitrary element of $\mathcal{D}\textup{iff}_{\id}(\Omega)$.  Since $\det(DZ_0)=1$ everywhere, it follows that 
\[
\chi(Z_0)\geq \sup_{q\in L^1(\Omega)} \int_{\Omega} q(x)\big(\det(DZ_0(x))-1\big)\, dx=\sup_{q\in L^1(\Omega)} \int_{\Omega} q(x)\big(|\det(DZ_0(x))|-1\big)\, dx.
\]
Therefore, 
\[
J_a(Z_0)\geq \sup_{q\in L^1(\Omega)} \mathcal{L}(Z_0, q)\geq \mathcal{L}(Z_0, \bar{q}),
\]
where $\bar{q}$ is defined as in Proposition \ref{pro:sufficient-cond}.  Using Proposition \ref{pro:sufficient-cond} and the fact that $J_a(Z^*)=\mathcal{L}(Z^*, \bar{q})$, we can conclude that
\begin{equation}\label{eq:done}
J_a(Z_0)\geq J_a(Z^*)+\frac{1}{2}\norm{Z-Z_0}_{L^2(\Omega)}^2. 
\end{equation}

%
%
%
%
%
%
%

%
%
%
%
%
%
%
%
%
%
%
%
%
\section{Preliminaries for the proof of Theorem \ref{thm:main_2}}  

It remains to prove the existence of the solution pair $(Z^*, q^*)$ that satisfies the hypothesis of Theorem \ref{thm:unique}.  In this section, we will introduce a number of basic results that will play an important role in our subsequent analysis.  We begin by introducing a special operator that will allow us to simplify equation (\ref{eq:qcrit}).

{\bf Changing a base point for an operator.} Given any operator $L$ from a subset of functions on $\Omega$ to another subset of functions on $\Omega$, whenever $Z: \bar \Omega \rightarrow \bar \Omega$ is invertible, we define 
\[
L_Z(f):= L \Big(f \circ Z^{-1} \Big) \circ Z.
\]
The operator $L_Z$ can be expressed in terms of the pull--back operator.

{\bf Leray projection operator $\PP$.} We set 
\[
\mathcal V_\id:= \Big\{ w \in L^2(\Omega; \Rd)\; : (w, \nabla \phi)=0 \; \textup{for all} \; \phi\in C^{\infty}(\Omega; \RR) \Bigr\}
\]
The Leray projection $\mathbb P: L^2( \Omega, \mathbb R^d) \rightarrow  \mathcal V_\id$ is the orthogonal projection of $L^2(\Omega)$ onto $\mathcal V_\id$. 
When $\partial \Omega$ is of class $C^{1,1}$, we have from Theorem 1 of \cite{M18} (also \cite{Solo73}, \cite{Solo77}) that
\begin{equation}\label{eq:april26.2020.1}
\|\mathbb P(\phi)\|_{W^{l,r}(\Omega)} \lesssim  \|\phi\|_{W^{l,r}(\Omega)} \qquad \forall l \in \{0, \cdots, k\} \quad \forall \phi \in W^{k,r}(\Omega),\; \forall r \in (1, \infty).
\end{equation}


{\bf The Projection operator $\PP_Z$.} 
Given a map $Z\in \mathcal{D}\textup{iff}_{\id}(\Omega)$ we can introduce the operator $\PP_Z$ from the Leray projection, using the change of base point formula.  Note that  $\PP_Z$ can also be understood as an orthogonal projection.  
If we define the space
\[
\mathcal V_Z:= \Big\{ w \in L^2(\Omega; \Rd)\; : (w, \nabla \phi(Z))=0 \; \textup{for all} \; \phi\in C^{\infty}(\Omega; \RR) \Bigr\},
\]
one can readily check that $w  \in \mathcal V_Z$ if and only if $w \circ Z^{-1} \in \mathcal V_\id$.    It then follows that $\PP_Z$ is the orthogonal projection of $L^2(\Omega;\Rd)$ onto $\mathcal{V}_Z$.

Now that we have defined $\PP_Z$, we can use it to simplify equation (\ref{eq:qcrit}) by eliminating the pressure/Lagrange multiplier variable $q$. This is accomplished in the following lemma.
\begin{lemma}
If $Z\in\mathcal{D}\textup{iff}_{\id}(\Omega)$ solves the equation
\begin{equation}\label{eq:ppcrit}
\PP_Z(I-a\Delta)(Z-S)=0,
\end{equation}
then there exists $q:\Omega\to\RR$ such that $(Z,q)$ is a solution to equation (\ref{eq:qcrit}).  Furthermore, $q$ can be recovered explicitly from the formula
\[
\nabla q=-DZ^T(I-a\Delta)(Z-S).
\]
\end{lemma}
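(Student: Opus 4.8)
The plan is to reverse-engineer the passage from (\ref{eq:ppcrit}) back to (\ref{eq:qcrit}) by exploiting the orthogonal decomposition induced by the projection $\PP_Z$. The starting observation is that $\PP_Z$ is the orthogonal projection of $L^2(\Omega;\Rd)$ onto $\mathcal V_Z$, so its range is $\mathcal V_Z$ and its kernel is the orthogonal complement $\mathcal V_Z^\perp$. By the characterization recorded just above the lemma, $w\in\mathcal V_Z$ if and only if $w\circ Z^{-1}\in\mathcal V_\id$, and $\mathcal V_\id$ is by definition the $L^2$-orthogonal complement of gradients $\nabla\phi$, $\phi\in C^\infty(\Omega;\RR)$. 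Hence $\mathcal V_Z^\perp$ consists precisely of the (closure of) fields of the form $\nabla\phi\circ Z$ — equivalently, using the change-of-base-point notation, $\mathcal V_Z^\perp$ is spanned by $(\nabla\phi)(Z)$. So the hypothesis $\PP_Z(I-a\Delta)(Z-S)=0$ says that $(I-a\Delta)(Z-S)$ lies in $\mathcal V_Z^\perp$, i.e.\ it can be written as $-(\nabla\tilde q)\circ Z$ for some scalar function $\tilde q$ (I insert the minus sign for later convenience).

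The first step, then, is to make this rigorous: from $\PP_Z(I-a\Delta)(Z-S)=0$ deduce that $(I-a\Delta)(Z-S)=-\nabla q(Z)$ for some $q:\Omega\to\RR$, interpreting things distributionally as the paper does elsewhere. Concretely, $w:=(I-a\Delta)(Z-S)$ satisfies $w\circ Z^{-1}\in\mathcal V_\id^\perp$; since $\mathcal V_\id^\perp$ is (the $L^2$-closure of) the space of gradients, $w\circ Z^{-1}=-\nabla q$ for a scalar $q$, and pulling back by $Z$ gives $w=-(\nabla q)\circ Z = -\nabla q(Z)$. This is exactly the first equation of (\ref{eq:crit}), namely $(I-a\Delta)(Z-S)+\nabla p(Z)=0$ with $p=q$. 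The second step is to convert this ``$\nabla q(Z)$'' form into the ``$D^T(\cof(DZ)q)$'' form appearing in (\ref{eq:qcrit}). This is precisely the computation the introduction alludes to around equations (\ref{eq:crit})–(\ref{eq:qcrit}): by the chain rule $\nabla\big(q(Z)\big)=DZ^T\,(\nabla q)(Z)$, while the Null-Lagrangian (Piola) identity $D^T\cof(DZ)=0$ gives $D^T\big(\cof(DZ)\,q(Z)\big)=\cof(DZ)^T\nabla\big(q(Z)\big)$; combining, and using $\cof(DZ)^T DZ^T=\det(DZ)I=I$ since $\det(DZ)=1$ for $Z\in\mathcal D{\rm iff}_{\id}(\Omega)$, one gets $D^T\big(\cof(DZ)\,q(Z)\big)=(\nabla q)(Z)=\nabla q(Z)$. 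Hence setting the Lagrange multiplier to be $q(Z)$ turns $(I-a\Delta)(Z-S)+\nabla q(Z)=0$ into $(I-a\Delta)(Z-S)+D^T\big(\cof(DZ)\,q(Z)\big)=0$; together with $\det(DZ)=1$, which holds automatically since $Z\in\mathcal D{\rm iff}_{\id}(\Omega)$, this is (\ref{eq:qcrit}). (One minor bookkeeping point: the lemma's final display names the multiplier $q$ and asserts $\nabla q=-DZ^T(I-a\Delta)(Z-S)$; this is consistent with the above once one recognizes that the ``$q$'' in the displayed formula is the function $q$ pulled back, i.e.\ satisfies $q\circ Z = $ the multiplier appearing in (\ref{eq:qcrit}). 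I would be careful to state exactly which object is which so the recovery formula $\nabla q=-DZ^T(I-a\Delta)(Z-S)$ comes out verbatim — indeed applying $DZ^T$ to $(I-a\Delta)(Z-S)=-\nabla q(Z)=-DZ^T{}^{-1}\!\cdots$ — more directly, from $(\nabla q)(Z) = D^T(\cof(DZ)\,q(Z))$ composed appropriately one reads off $\nabla q = -DZ^T (I-a\Delta)(Z-S)$ after the change of base point.)

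The main obstacle I anticipate is \emph{not} the algebra — the chain rule plus Piola identity is routine — but the functional-analytic justification of the step ``orthogonal complement of $\mathcal V_\id$ equals gradients'', carried out at the regularity level actually available. Here $Z\in\mathcal D{\rm iff}_{\id}(\Omega)$ is only assumed $C^1$ (or $W^{2,r}$ in the application), so $w=(I-a\Delta)(Z-S)$ is a distribution, not an $L^2$ field, and one must make sense of $\PP_Z$ and of ``$w\in\mathcal V_Z^\perp$'' in a distributional/duality pairing rather than a genuine Hilbert-space sense; the cleanest route is probably to test (\ref{eq:ppcrit}) against arbitrary $\PP_Z$-image fields $v(Z)$ with $v$ smooth divergence-free vanishing on $\partial\Omega$, recover $\int (Z-S)\cdot v(Z)+a(DZ-DS):D(v(Z))=0$, i.e.\ the weak Euler–Lagrange equation displayed before (\ref{eq:crit}), and then invoke the standard de Rham / Nečas lemma on $\Omega$ to produce the scalar potential $p$ (equivalently $q$). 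So the real content is: (i) unwinding the change-of-base-point definition of $\PP_Z$ to see that (\ref{eq:ppcrit}) is equivalent to the weak EL equation against $\{v(Z): v\in C^1_0,\ \nabla\cdot v=0\}$; (ii) applying de Rham to get the pressure; (iii) the chain-rule/Piola manipulation to land on (\ref{eq:qcrit}) and read off the recovery formula. I would present (i) and (iii) in detail and cite a standard reference for (ii).
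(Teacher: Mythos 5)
Your route---(i) read off from $\PP_Z(I-a\Delta)(Z-S)=0$ that $(I-a\Delta)(Z-S)$ pairs to zero against all $v\circ Z$ with $v$ smooth, divergence-free, vanishing on $\partial\Omega$; (ii) invoke de~Rham to extract a scalar potential so that $(I-a\Delta)(Z-S)=-(\nabla p)(Z)$; (iii) use the chain rule, the Piola identity $D^T\cof(DZ)=0$, and $\det(DZ)=1$ to convert to the $D^T(\cof(DZ)q)$ form of (\ref{eq:qcrit}) and read off the recovery formula---is the paper's proof, with the de~Rham step (which the paper delegates to the introduction's ``equivalently'') made explicit; your regularity caveat about interpreting everything in a distributional pairing is also in the spirit of the paper.

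One algebraic correction in step (iii): with the paper's convention, $D^T\big(\cof(DZ)\,q(Z)\big)=\cof(DZ)\,\nabla\big(q(Z)\big)$ (no transpose on $\cof$), and the identity to invoke is $\cof(DZ)\,DZ^T=\det(DZ)\,I$. Your displayed identity $\cof(DZ)^T DZ^T=\det(DZ)\,I$ is false in general---it equals $\big(DZ\cof(DZ)\big)^T$, which is not $\det(DZ)\,I$ unless $DZ$ is normal. You slipped two transposes which happen to cancel, so your conclusion $D^T\big(\cof(DZ)\,q(Z)\big)=(\nabla q)(Z)$ is still correct, but the intermediate lines need fixing. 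The correct chain is $D^T\big(\cof(DZ)\,q(Z)\big)=\cof(DZ)\,\nabla\big(q(Z)\big)=\cof(DZ)\,DZ^T\,(\nabla q)(Z)=\det(DZ)\,(\nabla q)(Z)=(\nabla q)(Z)$, from which $\nabla\big(q(Z)\big)=DZ^T(\nabla q)(Z)=-DZ^T(I-a\Delta)(Z-S)$ as claimed.
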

\begin{proof}

Since $\PP_Z$ is the orthogonal projection of $L^2(\Omega)$ onto the space $\mathcal{V}_Z$, the condition $\PP_Z (I-a\Delta)(Z-S)=0$ implies that
\[
\Big((I-a\Delta)(Z-S), v\circ Z\big)=0,
\]
for every divergence free vector field $v$ with zero normal component.
As we noted in the introduction, this is equivalent to the existence of a scalar function $q:\Omega\to\RR$ such that
\[
(I-a\Delta)(Z-S)+D^T(\cof(DZ)q)=0.
\]
To recover $q$, we use the fact that $D^T(\cof(DZ)q)=\cof(DZ)\nabla q$ and $DZ^T\cof(DZ)=\det(DZ)I=I$.
\end{proof}

In the process of finding solutions to (\ref{eq:ppcrit}), we shall need to be able to invert the equations
\[
\PP_Z (I-a\Delta) u=w, \quad \nabla \cdot u=0, \quad u|_{\partial\Omega}=0
\]
where $w\in \mathcal{V}_Z$ and $Z\in \mathcal{D}\textup{iff}_{\id}(\Omega)$ are given and $u$ is unknown. In the special case where $Z=\id$, this is known as the Stokes resolvent problem.   This problem plays an important role in the study of the Navier-Stokes equations and will reappear throughout the rest of our paper.

 {\bf The Stokes operator and the Stokes resolvent problem.}  For $1< p<\infty$, let
$$
K_p:= \{u \in W^{2, p}(\Omega, \mathbb R^d) \cap W^{1, p}_0(\Omega, \mathbb R^d): \nabla \cdot u=0 \hbox{ in } \Omega \}.
$$ 
The {\it Stokes operator} 
\begin{equation}\label{stokes_op}
\mathcal{A}:=-\mathbb P \Delta: K_p\to L^p(\Omega)
\end{equation}
is defined to be the negative of the composition of the Leray projection and the Laplace operator. For well-posedness and regularity properties of this operator, see for instance \cite{RTemam},\cite{Giga}.  Using the Stokes operator, we can rewrite the {\it Stokes resolvent problem} for  a given $w$ as follows:
\begin{equation} \label{eq:stokes_r}
(I+a\mathcal{A})u+w=0, \quad \nabla \cdot u=0, \quad u|_{\partial\Omega}=0.
\end{equation}
 The following Lemma on the solvability and regularity of the Stokes resolvent problem will be essential to our critical point analysis, and will reappear again when we consider the Navier-Stokes equations. 
 \begin{lemma} [Theorem 1.2, \cite{FarwigSohr}] \label{stokes}
If $w\in L^p(\Omega)$, then there exists a solution $u\in K_p$ to equation (\ref{eq:stokes_r}), a scalar function $f:\Omega\to\RR$ and a constant $\bar{C}_p$ such that
\[
(I-a\Delta)u+\nabla f+w=0
\]
and
\begin{equation}\label{eqn:sobolev3}
\|u \|_{L^p(\Omega)} + a  \| D^2 u \|_{L^{ p}(\Omega)} +\| \nabla f\|_{L^{ p}(\Omega)} \leq  \bar{C}_p \|w \|_{L^p(\Omega)}.
\end{equation}
\end{lemma}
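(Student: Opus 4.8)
Since this statement is quoted as Theorem 1.2 of \cite{FarwigSohr} (compare also \cite{Giga, Solo73, Solo77, RTemam}), the plan is to deduce it from the established $L^p$-theory of the stationary Stokes resolvent problem on bounded domains rather than to reconstruct that theory; what follows is the structure of the argument. First I would reduce to solenoidal data: on the bounded $C^{1,1}$ domain $\Omega$ the $L^p$-Helmholtz decomposition gives $w = \mathbb P w + \nabla g$ with $\|\mathbb P w\|_{L^p(\Omega)} + \|\nabla g\|_{L^p(\Omega)} \lesssim_p \|w\|_{L^p(\Omega)}$ (same boundary regularity and bounded-projection estimate as in \eqref{eq:april26.2020.1} with $l=0$). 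Because $\nabla g$ can be absorbed into the unknown pressure, it suffices to treat $w_\sigma := \mathbb P w \in \mathcal V_{\id}$.

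Next I would use the sectoriality of the Stokes operator $\mathcal A = -\mathbb P\Delta$ on the solenoidal subspace of $L^p(\Omega)$: it generates a bounded analytic semigroup and satisfies a resolvent bound of the form $\|(\mu I + \mathcal A)^{-1}\|_{\mathcal L(L^p)} \lesssim_p (1+|\mu|)^{-1}$ uniformly for $\mu$ in a sector about the positive real axis. Dividing \eqref{eq:stokes_r} by $a$ and taking $\mu = a^{-1}$, this yields the unique solution $u = -a^{-1}(a^{-1}I + \mathcal A)^{-1} w_\sigma \in K_p$, and the prefactor $a^{-1}$ together with the $(1+a^{-1})^{-1}$ in the resolvent bound gives $a^{-1}(1+a^{-1})^{-1} = (1+a)^{-1} \le 1$, hence $\|u\|_{L^p(\Omega)} \lesssim_p \|w_\sigma\|_{L^p(\Omega)}$.

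Then I would recover the pressure and bootstrap the second-order bound. By definition of $\mathcal A$ there is a scalar $\pi$ with $-\Delta u + \nabla\pi = \mathcal A u = -a^{-1}(u + w_\sigma)$, which rearranges to $(I - a\Delta)u + \nabla(a\pi) + w_\sigma = 0$; undoing the Helmholtz step and setting $f := a\pi + g$ produces the claimed identity. Viewing the displayed relation as a stationary Stokes system with right-hand side $-a^{-1}(u+w_\sigma) \in L^p(\Omega)$ and homogeneous Dirichlet data, the Agmon--Douglis--Nirenberg (ADN) elliptic estimates for the Stokes system on $\Omega$ give $\|D^2 u\|_{L^p(\Omega)} + \|\nabla\pi\|_{L^p(\Omega)} \lesssim_p a^{-1}\big(\|u\|_{L^p(\Omega)} + \|w_\sigma\|_{L^p(\Omega)}\big) + \|\nabla u\|_{L^p(\Omega)}$. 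Absorbing $\|\nabla u\|_{L^p}$ via the interpolation inequality $\|\nabla u\|_{L^p} \le \varepsilon\|D^2 u\|_{L^p} + C_\varepsilon\|u\|_{L^p}$ and inserting the previous step's bound on $\|u\|_{L^p}$, I would conclude $a\|D^2 u\|_{L^p(\Omega)} + \|\nabla f\|_{L^p(\Omega)} \lesssim_p \|w_\sigma\|_{L^p(\Omega)} \lesssim_p \|w\|_{L^p(\Omega)}$, which combined with the $L^p$ bound on $u$ is \eqref{eqn:sobolev3} with a suitable $\bar C_p$.

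The main difficulty lies entirely in the two imported ingredients --- the uniform (in $\mu$) resolvent/analytic-semigroup bound for $\mathcal A$ on a bounded domain and the ADN estimates for the stationary Stokes system --- each of which requires genuine work (parametrix construction near $\partial\Omega$, localization, and the $L^p$-boundedness of the associated singular integrals); this is exactly why the paper cites \cite{FarwigSohr}. Once those are granted, the reduction, the parameter bookkeeping, and the interpolation absorption above are routine, and the only point needing a little care is tracking the exact powers of $a$ so that the estimate comes out in the scale-invariant form stated.
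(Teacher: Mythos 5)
The paper offers no proof of this lemma; it is stated as a direct citation of Theorem 1.2 of Farwig--Sohr, so there is no "paper route" for you to match. Your sketch is a sound and essentially standard reconstruction of that cited result from three imported ingredients (the $L^p$-Helmholtz decomposition on $C^{1,1}$ domains, the sectorial resolvent estimate $\|(\mu I + \mathcal{A})^{-1}\|_{\mathcal{L}(L^p)} \lesssim (1+|\mu|)^{-1}$, and the stationary Stokes $W^{2,p}$ estimates of Agmon--Douglis--Nirenberg/Cattabriga type), and the bookkeeping in the powers of $a$ is carried out correctly. The only slip is a sign in the pressure assembly: with the Helmholtz decomposition $w = \mathbb{P}w + \nabla g$ and the scalar $\pi$ satisfying $-\Delta u + \nabla\pi = \mathcal{A}u = -a^{-1}(u + w_\sigma)$, undoing the Helmholtz step gives $(I - a\Delta)u + \nabla(a\pi - g) + w = 0$, so the pressure is $f = a\pi - g$ rather than $a\pi + g$; this is inconsequential for the estimate since $\|\nabla g\|_{L^p(\Omega)} \lesssim_p \|w\|_{L^p(\Omega)}$ in either case. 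Note also that the sectorial bound with the $(1+|\mu|)^{-1}$ behavior (rather than the weaker $|\mu|^{-1}$) relies on the invertibility of $\mathcal{A}$ at $\mu = 0$, which holds here because $\Omega$ is bounded and the Dirichlet Stokes operator has a Poincaré-type lower bound, but that is worth flagging as a used hypothesis.
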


\section{EVP and the main ideas for the proof of Theorem \ref{thm:main_2}}

 Now we have converted the critical point equation (\ref{eq:qcrit}) into the simplified form (\ref{eq:ppcrit}). As mentioned in the introduction, we will now  use a version of the implicit function theorem based on Ekeland's variational principle (EVP).  These ideas are first introduced in an abstract setting.
 
 \subsection{EVP and the implicit function theorem}

\begin{prop}[Ekeland variational principle]\label{lem:Ekeland} Let $(\mathcal T, {\rm dist})$ be a complete metric space and let $F:\mathcal T \to \mathbb {R} \cup \{+\infty \}$ be a lower semicontinuous function that is bounded below and is not identically $\infty.$ If  $x_{0}\in \mathcal T$ such that $F(x_{0})\leq\epsilon +\inf_{\mathcal T} F$ for some $\epsilon>0$, then for all $\lambda>0$ there exists $ x_{\lambda}\in \mathcal T$ such that 
\[
F(x_{\lambda})\leq F(x_{0}), \quad {\rm dist}(x_0, x_{\lambda}) \leq \lambda, \qquad \text{and} \qquad  F(x_{\lambda})<F(x)+{\epsilon \over \lambda} {\rm dist}(x, x_{\lambda}) \qquad \forall x \in \mathcal T\setminus\{x_{\lambda}\}.
\]
\end{prop}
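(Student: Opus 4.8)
The plan is to prove this by the classical iterative ``descent'' construction, which realizes $x_\lambda$ as the limit of a carefully chosen minimizing sequence. Write $\kappa := \epsilon/\lambda$ for the penalty coefficient, and note that $\inf_{\mathcal T} F$ is a finite real number (since $F$ is bounded below and not identically $+\infty$). For a point $y \in \mathcal T$, introduce the set
\[
S(y) := \{\, x \in \mathcal T : F(x) + \kappa\, {\rm dist}(x, y) \le F(y) \,\},
\]
which is nonempty (it contains $y$) and closed by the lower semicontinuity of $F$. Starting from $x_0$, I would build a sequence $(x_n)$ inductively: given $x_n$, since $F$ is bounded below on $S(x_n)$ there is an ``almost optimal'' point $x_{n+1} \in S(x_n)$ with
\[
F(x_{n+1}) \le \tfrac{1}{2}\Big( F(x_n) + \inf_{S(x_n)} F \Big).
\]

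Next I would record three elementary facts. First, the triangle inequality for ${\rm dist}$ shows $S(x_{n+1}) \subseteq S(x_n)$, so the sets are nested. Second, membership $x_{n+1} \in S(x_n)$ gives $\kappa\, {\rm dist}(x_{n+1}, x_n) \le F(x_n) - F(x_{n+1})$; as $(F(x_n))$ is nonincreasing and bounded below, summing this inequality telescopes and yields $\kappa \sum_n {\rm dist}(x_{n+1}, x_n) \le F(x_0) - \inf_{\mathcal T} F \le \epsilon$, hence $\sum_n {\rm dist}(x_{n+1}, x_n) \le \lambda$. Thus $(x_n)$ is Cauchy and, by completeness of $\mathcal T$, converges to some $x_\lambda$ with ${\rm dist}(x_0, x_\lambda) \le \lambda$. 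Third, lower semicontinuity gives $F(x_\lambda) \le \liminf_n F(x_n) \le F(x_0)$, and since $x_m \in S(x_n)$ for all $m \ge n$ and each $S(x_n)$ is closed, we get $x_\lambda \in \bigcap_n S(x_n)$.

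It remains to check the strict minimality property $F(x_\lambda) < F(x) + \kappa\,{\rm dist}(x, x_\lambda)$ for every $x \ne x_\lambda$, which I expect to be the one genuinely delicate point. Arguing by contradiction, suppose some $y \ne x_\lambda$ satisfies $F(y) + \kappa\,{\rm dist}(y, x_\lambda) \le F(x_\lambda)$. Combining this with $x_\lambda \in S(x_n)$ and the triangle inequality once more, one finds $y \in S(x_n)$ for every $n$, so $F(y) \ge \inf_{S(x_n)} F$ for all $n$. On the other hand, the greedy choice rearranges to $\inf_{S(x_n)} F \ge 2F(x_{n+1}) - F(x_n)$, and since $F(x_n)$ converges (monotone and bounded below) with limit $L \ge F(x_\lambda)$, the right-hand side tends to $L$. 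Hence $F(y) \ge L \ge F(x_\lambda)$, contradicting $F(y) \le F(x_\lambda) - \kappa\,{\rm dist}(y, x_\lambda) < F(x_\lambda)$, where the last inequality is strict because $\kappa > 0$ and $y \ne x_\lambda$. This establishes the claim. An alternative route is a Zorn-type maximality argument applied to the partial order $x \preceq y \iff F(x) + \kappa\,{\rm dist}(x,y) \le F(y)$, using boundedness below of $F$ to see that decreasing chains are Cauchy; I would nonetheless prefer the explicit iterative proof, since it avoids the axiom of choice and makes transparent the role of the hypothesis $F(x_0) \le \epsilon + \inf_{\mathcal T} F$ in pinning down the location of $x_\lambda$.
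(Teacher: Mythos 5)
The paper does not prove this Proposition; it is stated as a classical result and the authors simply cite \cite{Ekeland, ekeland_inverse}. Your proof is the standard iterative ``$\tfrac12$-greedy'' construction due to Ekeland, and it is correct: the nesting $S(x_{n+1}) \subseteq S(x_n)$ follows from the triangle inequality, the telescoping bound $\kappa\sum_n {\rm dist}(x_{n+1},x_n) \le F(x_0)-\inf_{\mathcal T}F \le \epsilon$ gives both the Cauchy property and ${\rm dist}(x_0,x_\lambda)\le\lambda$, closedness of the $S(x_n)$ together with lower semicontinuity pins $x_\lambda$ in the intersection, and the greedy bound $\inf_{S(x_n)}F \ge 2F(x_{n+1})-F(x_n) \to L \ge F(x_\lambda)$ delivers the contradiction in the final strict-minimality step. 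This is precisely where the hypothesis $F(x_0)\le\epsilon+\inf_{\mathcal T}F$ is used to make $\lambda$ the correct localization radius, exactly as you point out; the argument is complete and requires no choice beyond dependent choice.
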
    In order to use EVP to solve (\ref{eq:ppcrit}), we need to convert the question of finding zeros into a variational problem.  This is accomplished in the following abstract  lemma, which is an adaptation of Ekeland's argument from \cite{ekeland_inverse} that is well-suited to our setting.  
\begin{lemma}\label{lem:abstract_slope_bound}
Suppose that $\mathcal{X}, \mathcal{Y}$ are Banach spaces and $\Phi:\mathcal{X}\to\mathcal{Y}$ is a continuous and Frechet differentiable map.  Given a closed proper subset $\mathcal{M}\subset \mathcal{X}$,  we define a function $F:\mathcal{X}\to\RR\cup\{+\infty\}$ such that
\[
F(x):=\begin{cases}
\norm{\Phi(x)}_{\mathcal{Y}} &\textup{if}\; x\in \mathcal{M},\\
+\infty &\textup{otherwise}
\end{cases}
\]
Given a point $x_0\in M$ and $\lambda>0$, let $x_{\lambda}\in\mathcal{X}$ be the point provided by Ekeland's variational principle such that 
\[
F(x_{\lambda})\leq F(x_{0}), \quad \norm{x_0-x_{\lambda}}_{\mathcal{X}} \leq \lambda, \qquad \text{and} \qquad  F(x_{\lambda})<F(x)+{F(x_0) \over \lambda} \norm{x-x_{\lambda}}_{\mathcal{X}} \qquad \forall x \in  \mathcal{X}\setminus\{x_{\lambda}\}.
\]
If $F(x_{\lambda})\neq 0$ and $\gamma:[0,1]\to \mathcal{M}$ is a $C^1$ path such that $\gamma(0)=x_{\lambda}$, $\gamma'(0)=F(x_{\lambda})v$ for some vector $v\in \mathcal{X}$, then
\begin{equation}\label{eq:abstract_slope}
-1+\norm{\frac{\Phi(x_{\lambda})}{F(x_{\lambda})}+\textup{d}\Phi(x_{\lambda};v)}_{\mathcal{Y}} \geq -\frac{F(x_0)}{\lambda}\norm{v}_{\mathcal{X}},
\end{equation}
where $\textup{d}\Phi(x_{\lambda};v)$ is the Frechet derivative of $\Phi$ at $x_{\lambda}$ in the direction of $v$. 
\end{lemma}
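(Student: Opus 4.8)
The plan is to test the third (slope) conclusion of Ekeland's principle against the given path $\gamma$ and let the path parameter tend to $0$. First I would dispose of the degenerate case $v=0$: since $F(x_\lambda)\neq 0$, inequality \eqref{eq:abstract_slope} then asserts only $\norm{\Phi(x_\lambda)/F(x_\lambda)}_{\mathcal Y}\geq 1$, which is an equality by the very definition of $F$ on $\mathcal M$, so there is nothing to prove. Assume then $v\neq 0$, so that $\gamma'(0)=F(x_\lambda)v\neq 0$. Since $\gamma$ is $C^1$ we have $\gamma(t)=x_\lambda+t\gamma'(0)+o(t)$, hence $\gamma(t)\neq x_\lambda$ for all sufficiently small $t>0$, and we may legitimately use $x=\gamma(t)$ in the Ekeland inequality. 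Because $\gamma(t)\in\mathcal M$ we have $F(\gamma(t))=\norm{\Phi(\gamma(t))}_{\mathcal Y}$, so the Ekeland inequality reads
\[
F(x_\lambda)-\norm{\Phi(\gamma(t))}_{\mathcal Y}<\frac{F(x_0)}{\lambda}\,\norm{\gamma(t)-x_\lambda}_{\mathcal X}\qquad\text{for all small }t>0 .
\]
By Frechet differentiability of $\Phi$ and the chain rule, $\Phi(\gamma(t))=\Phi(x_\lambda)+t\,F(x_\lambda)\,\textup{d}\Phi(x_\lambda;v)+o(t)$.

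The one genuine idea is an elementary rewriting of this first-order term that manufactures exactly the vector appearing in \eqref{eq:abstract_slope}: for $t\in[0,1]$,
\[
\Phi(x_\lambda)+t\,F(x_\lambda)\,\textup{d}\Phi(x_\lambda;v)=(1-t)\,\Phi(x_\lambda)+t\,F(x_\lambda)\Bigl(\tfrac{\Phi(x_\lambda)}{F(x_\lambda)}+\textup{d}\Phi(x_\lambda;v)\Bigr),
\]
so the triangle inequality together with $\norm{\Phi(x_\lambda)}_{\mathcal Y}=F(x_\lambda)$ gives, for small $t>0$,
\[
\norm{\Phi(\gamma(t))}_{\mathcal Y}\leq (1-t)\,F(x_\lambda)+t\,F(x_\lambda)\,\normm{\tfrac{\Phi(x_\lambda)}{F(x_\lambda)}+\textup{d}\Phi(x_\lambda;v)}_{\mathcal Y}+o(t) .
\]
Substituting this into the displayed Ekeland inequality, dividing by $t>0$, and letting $t\to 0^+$ — using $\norm{\gamma(t)-x_\lambda}_{\mathcal X}/t\to\norm{\gamma'(0)}_{\mathcal X}=F(x_\lambda)\norm{v}_{\mathcal X}$ and that the $o(t)/t$ remainders disappear — yields
\[
F(x_\lambda)\Bigl(1-\normm{\tfrac{\Phi(x_\lambda)}{F(x_\lambda)}+\textup{d}\Phi(x_\lambda;v)}_{\mathcal Y}\Bigr)\leq \frac{F(x_0)}{\lambda}\,F(x_\lambda)\,\norm{v}_{\mathcal X}.
\]
Dividing by $F(x_\lambda)>0$ and rearranging is precisely \eqref{eq:abstract_slope}.

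I do not anticipate a real obstacle here: the statement is a one-sided directional-derivative estimate for the function $F=\norm{\Phi}_{\mathcal Y}$ at the approximate critical point $x_\lambda$. The two points requiring a little care are the convex-combination identity above — it is what converts the crude first-order expansion of $\Phi$ into the precise norm occurring in \eqref{eq:abstract_slope}, and it relies on $t\le 1$, which is harmless since we send $t\to 0^+$ — and the bookkeeping of the two $o(t)$ remainders, one coming from the Frechet expansion of $\Phi$ and one from the $C^1$ expansion of $\gamma$, both of which are killed by dividing by $t$ before passing to the limit. One should also note that the strict inequality becomes non-strict in the limit, which is all that is claimed.
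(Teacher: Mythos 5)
Your proof is correct and follows essentially the same route as the paper: test the strict Ekeland slope inequality along $x=\gamma(t)$, expand $\Phi(\gamma(t))$ to first order, rewrite the leading term as the convex combination $(1-t)\Phi(x_\lambda)+t\bigl(\Phi(x_\lambda)+F(x_\lambda)\,\textup{d}\Phi(x_\lambda;v)\bigr)$, apply the triangle inequality, divide by $t$, and let $t\to 0^+$. The only (minor) additions over the paper's version are your explicit disposal of the degenerate $v=0$ case and your remark that $\gamma(t)\neq x_\lambda$ for small $t>0$ (needed to invoke the strict inequality); both are harmless niceties and do not change the argument.
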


\begin{proof}
Let us first note that it is valid to apply EVP to $F$, since $F$ is nonnegative, lower semicontinuous, and not identically infinity. 
EVP implies that
\[
F(\gamma(t))-F(x_{\lambda})> -\frac{F(x_0)}{\lambda}\norm{\gamma(t)-x_{\lambda}}_{\mathcal{X}}
\]
for all $t>0$.  By the triangle inequality
\[
F(\gamma(t))\leq (1-t)\norm{\Phi(x_{\lambda})}_{\mathcal{Y}}+t\norm{\Phi(x_{\lambda})+F(x_{\lambda})\textup{d}\Phi(x_{\lambda},v)}_{\mathcal{Y}}+\norm{\Phi(\gamma(t))-\Phi(x_{\lambda})-tF(x_{\lambda})\textup{d}\Phi(x_{\lambda}; v)}_{\mathcal{Y}}.
\]
Since $\Phi$ is Frechet differentiable, we have 
\[
\lim_{t\to 0} t^{-1}\norm{\Phi(\gamma(t))-\Phi(x_{\lambda})-tF(x_{\lambda})\textup{d}\Phi(x_{\lambda}; v)}_{\mathcal{Y}}=0.
\]
Therefore, 
\[
\lim_{t\to 0^+} \frac{F(\gamma(t))-F(x_{\lambda})}{t}\leq -F(x_{\lambda})+\norm{\Phi(x_{\lambda})+F(x_{\lambda})\textup{d}\Phi(x_{\lambda};v)}_{\mathcal{Y}}.
\]
Hence, it follows that 
\[
-F(x_{\lambda})+\norm{\Phi(x_{\lambda})+F(x_{\lambda})\textup{d}\Phi(x_{\lambda};v)}_{\mathcal{Y}}\geq -F(x_{\lambda})\frac{F(x_0)}{\lambda}\norm{v}_{\mathcal{X}}.
\]
Dividing both sides by $F(x_{\lambda})$ gives the result. 
\end{proof}
It is not immediately obvious how one can use Lemma \ref{lem:abstract_slope_bound} to find zeros of a map $\Phi$.   However, note that because $\norm{\frac{\Phi(x_{\lambda})}{F(x_{\lambda})}}_{\mathcal{Y}}=1$, the Lemma essentially gives a bound on the steepest descent rate of $F$ at $x_{\lambda}$ when $F(x_{\lambda})\neq 0$.  If we can show that this bound is impossible for some $\lambda>0$, then it follows that $F(x_{\lambda})=0$ and hence $\Phi(x_{\lambda})=0$.   For example, under the usual assumptions for the implicit function theorem (i.e. $\mathcal{M}=\varnothing$ and $v\mapsto \textup{d}\Phi(x, v)$ is a linear bijection with a uniformly continuous inverse for all $x$ in a neighborhood of $x_0$), we can choose $v=-\textup{d}\Phi(x_{\lambda}, \frac{\Phi(x_{\lambda})}{F(x_{\lambda})})^{-1}$, which is the steepest descent direction for $F$ at $x_{\lambda}$.   With this choice, the slope inequality will fail as long as $F(x_0)$ is sufficiently small and $\lambda$ is chosen appropriately.   

On the other hand, there is no reason that one needs to invert $v\mapsto \textup{d}\Phi(x, v)$ exactly.  As long as we can find a (valid) direction $v$ where the inequality (\ref{eq:abstract_slope}) fails, we will have found a zero of $\Phi$.  Indeed, this is the advantage of the EVP based approach --- we are allowed to make some error when we attempt to invert  $v\mapsto \textup{d}\Phi(x, v)$.  Furthermore, when we make a choice for $\lambda$ we will have the guarantee that the solution $x_{\lambda}$ is at most distance $\lambda$ away from the starting point $x_0$ in the $\mathcal{X}$ norm.  This gives us complete quantitative control on the solution.  Finally, this approach makes it very convenient to enforce a nonlinear constraint on the solution set.  If $\mathcal{M}\neq \varnothing$, then one just needs to ensure that the descent direction $v$ is chosen to be in the ``tangent space'' of $\mathcal{M}$ at $x_{\lambda}$. 

\subsection{Adapting the arguments to our setting}

To apply Lemma \ref{lem:abstract_slope_bound} to find zeros of (\ref{eq:ppcrit}), we need to give appropriate choices for the spaces $\mathcal{X}, \mathcal{Y}, \mathcal{M}$ and the map $\Phi$.  Once these have been chosen, we shall define $F$ as in Lemma \ref{lem:abstract_slope_bound}.  

We shall take $\mathcal{X}=W^{2,r}_{\id}(\Omega)$ with a modified norm that depends on the parameter $a>0$.  More precisely we take 
\begin{equation}\label{spaces}
\mathcal{X}=\mathcal{X}_a;\,\,\mathcal{Y}=L^r(\Omega);\,\,\hbox{ and }\mathcal{M}=W^{2,r}_{\id}(\Omega)\cap \mathcal{D}\textup{iff}_{\id}(\Omega),
\end{equation}
where $\mathcal{X}_a$ has its elements the same as $W^{2,r}_{\id}(\Omega)$ with the norm
\begin{equation}
\norm{Z}_{\mathcal{X}_a}:=\norm{Z}_{L^r(\Omega)}+a\norm{D^2 Z}_{L^r(\Omega)}. 
\end{equation}
  Note that by setting $\mathcal{M}=W^{2,r}_{\id}(\Omega)\cap \mathcal{D}\textup{iff}_{\id}(\Omega)$ we will ensure that any points produced by EVP will satisfy the determinant constraint $\det(DZ)=1$.   Finally, since we wish to solve (\ref{eq:ppcrit}), we shall define
\begin{equation}\label{eq:phi_def}
\Phi(Z):=\PP_Z(I-a\Delta)(Z-S),
\end{equation}
which is clearly a map from $W^{2,r}(\Omega)$ into $L^r(\Omega)$.  More precisely, $\PP_Z$ is a map from $W^{2,r}(\Omega)$ into $\mathcal{V}_Z\cap L^r(\Omega)$.   

To contradict the inequality (\ref{eq:abstract_slope}), we shall need to minimize $\norm{\frac{\Phi(Z_{\lambda})}{F(Z_{\lambda})}+\textup{d}\Phi(Z_{\lambda};v)}_{L^r(\Omega)}$.  Note that $\frac{\Phi(Z_{\lambda})}{F(Z_{\lambda})}$ must take the form
\begin{equation}\label{eq:w_def}
\frac{\Phi(Z_{\lambda})}{F(Z_{\lambda})}=w(Z_{\lambda})
\end{equation}
where $w\in L^r(\Omega)\cap \mathcal{V}$ is a divergence free vector field such that $\norm{w}_{L^r(\Omega)}=1$.  Hence, we must be able to find a solution $v$ that approximately solves the equation
\begin{equation}\label{eq:ift_target}
\textup{d}\Phi(Z_{\lambda};v)=-w(Z_{\lambda})
\end{equation}
for a given divergence free vector field $w$ with unit $L^r$ norm. 

Luckily, $\Phi(Z)$ is very nearly a linear map, the only nonlinear behavior comes from the operator $\PP_Z$.  Hence, apart from the contribution coming from $\PP_Z$, the Frechet derivative of $\Phi$ is trivial.   Given a point $Z\in\mathcal{M}$, let 
\[
d\PP(Z;v)=\lim_{t\to 0^+} \frac{\PP_{Z+tv}-\PP_{Z}}{t}
\]
 denote the Frechet derivative of $\PP_Z$ in the direction of a vector $v\in W^{2,r}(\Omega)$. We can then write equation (\ref{eq:ift_target}) as
 \begin{equation}\label{eq:gat_phi}
 \textup{d}\Phi(Z;v)=\PP_Z (I-a\Delta)v+d\PP(Z;v)(I-a\Delta)(Z-S).
 \end{equation}
The second term in the Frechet derivative of $\Phi$ is rather annoying to work with. Thus, rather than try to invert the full expression (\ref{eq:gat_phi}), we will just treat the second term as an error term and try to approximately solve
\begin{equation}\label{eq:simplified_target}
\PP_Z (I-a\Delta)v=-w(Z_{\lambda}).
\end{equation}
However, even this simplified expression is tricky to solve explicitly due to the combination of the operators $\PP_Z$ and $(I-a\Delta)$.  Indeed, $\PP_Z$ is a linear operator with base point $Z$, while $(I-a\Delta)$ is a linear operator with base point at the identity, thus their composition is rather complicated.   To simplify matters, we shall let $u$ be a solution to the Stokes resolvent problem
\[
(I+\mathcal{A}) u=-w, \quad \nabla \cdot u=0, \quad u|_{\partial\Omega}=\id
\]
and choose $v=u\circ Z_{\lambda}$.  This choice of $v$ will not exactly solve (\ref{eq:simplified_target}), hence, this leads to a second source of error that we shall also need to control. 

The above considerations are now summarized in the following Proposition, which simplifies Lemma \ref{lem:abstract_slope_bound} and converts it into our specific setting.
\begin{prop}\label{prop:specific_slope_bound}
Given a point $\tilde{Z}\in\mathcal{M}$ and some $\lambda>0$, let $Z_{\lambda}$ be the point chosen by Ekeland's variational principle starting from $\tilde{Z}$. If $F(Z_{\lambda})\neq 0$, then for $w$ given in (\ref{eq:w_def}) we have
\begin{equation}\label{eq:specific_slope_ineq0}
-1+a\norm{\PP_{Z_{\lambda}}\big(\Delta(u\circ Z_{\lambda})-(\Delta u)\circ Z_{\lambda}\big)}_{L^r(\Omega)}+\norm{d\PP(Z_{\lambda};u\circ Z_{\lambda})}_{r,r}\norm{(I-a\Delta)(Z_{\lambda}-S)}_{L^r(\Omega)}\geq -\frac{F(\tilde{Z})}{\lambda}\norm{u\circ Z_{\lambda}}_{\mathcal{X}_a},
\end{equation}
where $u$ solves the Stokes resolvent problem \eqref{eq:stokes_r} and
\begin{equation}
\norm{d\PP(Z_{\lambda};u\circ Z_{\lambda})}_{r,r} :=\sup_{\norm{f}_{L^r(\Omega)}\leq 1}\norm{d\PP(Z_{\lambda};u\circ Z_{\lambda})f}_{L^r(\Omega)}.
\end{equation}
\end{prop}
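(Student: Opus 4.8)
The plan is to specialize Lemma~\ref{lem:abstract_slope_bound} to the choices in \eqref{spaces}, with $\Phi$ as in \eqref{eq:phi_def} (which is Frechet differentiable, its derivative being recorded in \eqref{eq:gat_phi}), and to run the Ekeland process from $\tilde{Z}$, producing $Z_\lambda$; since $F(Z_\lambda)\neq 0$ by hypothesis, the conclusion \eqref{eq:abstract_slope} of that lemma is available. By \eqref{eq:w_def} we may write $\Phi(Z_\lambda)/F(Z_\lambda)=w\circ Z_\lambda$ with $w\in L^r(\Omega)\cap\mathcal V$, $\norm{w}_{L^r(\Omega)}=1$. The one real decision is the admissible direction $v$: I would take $v=u\circ Z_\lambda$, where $u\in K_r$ solves the Stokes resolvent problem \eqref{eq:stokes_r} with datum $w$. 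Lemma~\ref{stokes} provides such a $u$ and, crucially, a scalar $f$ with
\[
(I-a\Delta)u=-w-\nabla f, \qquad \norm{u}_{L^r(\Omega)}+a\norm{D^2u}_{L^r(\Omega)}\le\bar C_r ;
\]
the first identity is exactly what will cancel the leading terms below.

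Next I would realize $v$ by a $C^1$ path staying inside the nonlinear set $\mathcal M$. Let $\psi_s$ denote the Lagrangian flow of $u$, i.e.\ $\partial_s\psi_s=u\circ\psi_s$ with $\psi_0=\id$. Because $r>d$, $u\in W^{2,r}(\Omega)\hookrightarrow C^1(\bar\Omega)$ is Lipschitz, divergence free, and vanishes on $\partial\Omega$, so $\psi_s$ is globally defined, fixes $\partial\Omega$, is volume preserving by Liouville's theorem, and — again using $r>d$ — is a $C^1$ curve of $W^{2,r}$ diffeomorphisms (standard regularity of flows of Sobolev vector fields, cf.\ \cite{EM}). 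Setting $\gamma(t):=\psi_{F(Z_\lambda)t}\circ Z_\lambda$, the fact that $W^{2,r}$ is closed under composition makes $\gamma$ a $C^1$ curve with $\gamma(0)=Z_\lambda$ and $\gamma'(0)=F(Z_\lambda)\,(u\circ Z_\lambda)$, while $\det D\gamma(t)=\big(\det D\psi_{F(Z_\lambda)t}\big)\circ Z_\lambda\cdot\det DZ_\lambda=1$ and $\gamma(t)|_{\partial\Omega}=\id$ show that $\gamma(t)\in\mathcal M$. Thus \eqref{eq:abstract_slope} applies with this $v$, giving
\[
-1+\Bignorm{\tfrac{\Phi(Z_\lambda)}{F(Z_\lambda)}+\textup{d}\Phi(Z_\lambda;u\circ Z_\lambda)}_{L^r(\Omega)}\ \ge\ -\tfrac{F(\tilde{Z})}{\lambda}\,\norm{u\circ Z_\lambda}_{\mathcal X_a}.
\]

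It then remains to estimate the bracketed norm. Using \eqref{eq:gat_phi} and $\Phi(Z_\lambda)/F(Z_\lambda)=w\circ Z_\lambda$, I would split off the commutator
\[
(I-a\Delta)(u\circ Z_\lambda)=\big((I-a\Delta)u\big)\circ Z_\lambda-a\big(\Delta(u\circ Z_\lambda)-(\Delta u)\circ Z_\lambda\big),
\]
apply $\PP_{Z_\lambda}$, and use the intertwining identity $\PP_{Z_\lambda}(h\circ Z_\lambda)=(\PP h)\circ Z_\lambda$ (immediate from the definition of $\PP_{Z_\lambda}$) together with $(I-a\Delta)u=-w-\nabla f$ and the Leray identities $\PP w=w$, $\PP\nabla f=0$. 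This gives $\PP_{Z_\lambda}(I-a\Delta)(u\circ Z_\lambda)=-w\circ Z_\lambda-a\,\PP_{Z_\lambda}\big(\Delta(u\circ Z_\lambda)-(\Delta u)\circ Z_\lambda\big)$, so in $\Phi(Z_\lambda)/F(Z_\lambda)+\textup{d}\Phi(Z_\lambda;u\circ Z_\lambda)$ the terms $w\circ Z_\lambda$ cancel, leaving only $-a\,\PP_{Z_\lambda}\big(\Delta(u\circ Z_\lambda)-(\Delta u)\circ Z_\lambda\big)+d\PP(Z_\lambda;u\circ Z_\lambda)(I-a\Delta)(Z_\lambda-S)$; the triangle inequality bounds its $L^r$ norm by $a\norm{\PP_{Z_\lambda}\big(\Delta(u\circ Z_\lambda)-(\Delta u)\circ Z_\lambda\big)}_{L^r(\Omega)}+\norm{d\PP(Z_\lambda;u\circ Z_\lambda)}_{r,r}\norm{(I-a\Delta)(Z_\lambda-S)}_{L^r(\Omega)}$. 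Substituting into the displayed slope inequality yields \eqref{eq:specific_slope_ineq0}.

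I expect the main obstacle to be the second step: building a genuine $C^1$ path inside the constraint set $\mathcal M$ with prescribed initial velocity $F(Z_\lambda)(u\circ Z_\lambda)$. This is precisely where one uses $r>d$ (so that $W^{2,r}\hookrightarrow C^1$, $W^{2,r}$ is closed under composition, and the flow of a $W^{2,r}$ field is a $C^1$ curve of $W^{2,r}$ diffeomorphisms) and $Z_\lambda\in\mathcal D\textup{iff}_{\id}(\Omega)$ (so that composition with $Z_\lambda$ is a volume-preserving $L^r$-isometry — which is also why $\norm{w\circ Z_\lambda}_{L^r(\Omega)}=\norm{w}_{L^r(\Omega)}=1$ in \eqref{eq:w_def}). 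Everything after that — the commutator split and the cancellation of $w\circ Z_\lambda$ via the Stokes resolvent identity and the intertwining of $\PP_{Z_\lambda}$ — is routine.
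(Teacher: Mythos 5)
Your proposal is correct and follows essentially the same route as the paper: choose $v=u\circ Z_\lambda$ with $u$ solving the Stokes resolvent problem for $w$, build the admissible path via the (reparametrized) flow of $u$ — which is exactly what the paper's Lemma~\ref{lem:aug20.4} in Appendix~\ref{sec:standard} provides — and then cancel $w\circ Z_\lambda$ through the intertwining $\PP_{Z_\lambda}(h\circ Z_\lambda)=(\PP h)\circ Z_\lambda$ combined with the Stokes resolvent identity, before applying the triangle inequality to the remaining commutator and $d\PP$ terms. The only cosmetic difference is that you unpack $(I+a\mathcal A)u=-w$ as $(I-a\Delta)u=-w-\nabla f$ with $\PP\nabla f=0$, which is the same identity, and you make the path reparametrization $\gamma(t)=\psi_{F(Z_\lambda)t}\circ Z_\lambda$ explicit where the paper leaves it implicit.
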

\begin{proof}
$u$ is a divergence free vector field vanishing on $\partial\Omega$.
Therefore, thanks to the construction in Appendix \ref{sec:standard}, there exists a $C^1$ curve $Z(t):[0,1]\to\mathcal{M}$ such that $Z(0)=Z_{\lambda}$ and $Z'(0)=u(Z_{\lambda})$.   Now we can apply Lemma \ref{lem:abstract_slope_bound} to obtain the inequality
\[
-1+\norm{w\circ Z_{\lambda}+\textup{d}\Phi(Z_{\lambda};u\circ Z_{\lambda})}_{L^r(\Omega)} \geq -\frac{F(\tilde{Z})}{\lambda}\norm{u\circ Z_{\lambda}}_{\mathcal{X}_a}.
\]
Using equation (\ref{eq:gat_phi}), the triangle inequality, and the definition of the operator norm $\norm{d\PP(Z_{\lambda};u\circ Z_{\lambda})}_{r,r}$, it follows that 
\[
-1+\norm{w\circ Z_{\lambda}+\PP_{Z_{\lambda}}(I-a\Delta)(u\circ Z_{\lambda})}_{L^r(\Omega)} +\norm{d\PP(Z_{\lambda};u\circ Z_{\lambda})}_{r,r}\norm{(I-a\Delta)(Z_{\lambda}-S)}_{L^r(\Omega)}\geq -\frac{F(\tilde{Z})}{\lambda}\norm{u\circ Z_{\lambda}}_{\mathcal{X}_a}.
\]
Finally, we note that
\[
-w\circ Z_{\lambda}=\big((I+a\mathcal{A})u\big)\circ Z_{\lambda}=\PP_{Z_{\lambda}}\big[(I-a\Delta) u\circ Z_{\lambda}\big].
\]
Thus, 
\[
w\circ Z_{\lambda}+\PP_{Z_{\lambda}}(I-a\Delta)(u\circ Z_{\lambda})=a\PP_{Z_{\lambda}}\big(\Delta(u\circ Z_{\lambda})-(\Delta u)\circ Z_{\lambda}\big).
\]

\end{proof}

\section{Estimates and the proof of Theorem~\ref{thm:main_2}}\label{sec:main_estimates}

In this section, we will complete the proof of Theorem \ref{thm:main_2} by estimating the various quantities in (\ref{eq:specific_slope_ineq0}) and choosing an appropriate starting point $\tilde{Z}$.  

\subsection{Estimates}
We begin by estimating the operator norm $\norm{d\PP(Z_{\lambda};u\circ Z_{\lambda})}_{r,r}$.    We will do this by estimating the difference
\begin{equation*}
\norm{\PP_{Z_1}-\PP_{Z_2}}_{r,r}
\end{equation*} 
for arbitrary maps $Z_1, Z_2\in\mathcal{D}\textup{iff}_{\id}(\Omega)$.  To start, we will consider the case where one of the maps is the identity.

\begin{lemma}\label{lem:april12.2020}
If $Z \in \mathcal D{\rm iff}_{\id}(\Omega)$,  then for $r\in (1,\infty)$
\[
\norm{\PP_Z-\PP}_{r,r}\leq \big(  \norm{I-DZ}_{L^{\infty}(\Omega)}+\norm{\cof(DZ)-I}_{L^{\infty}(\Omega)}\big)\norm{\PP}_{r,r}^2.
\]

\end{lemma}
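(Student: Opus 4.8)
The plan is to reduce the bound to a comparison of the two projections on a test function, using the fact that both $\PP_Z$ and $\PP$ are bounded projections. Fix $f \in L^r(\Omega)$; I want to control $\norm{\PP_Z f - \PP f}_{L^r(\Omega)}$. The key identity is that $\PP_Z = \PP_Z \PP_Z$ and $\PP = \PP\PP$, so one can write $\PP_Z f - \PP f = \PP_Z(f - \PP f) - (\PP - \PP_Z \PP)f$, or more usefully, exploit that for $g$ in the range of one projection the other projection is ``almost'' the identity on it. Concretely, since $\PP f \in \mathcal V_\id$, the discrepancy $\PP_Z f - \PP f = \PP_Z(\PP f) - \PP f + \PP_Z(f - \PP f)$, and $f - \PP f = \nabla \phi$ for a potential $\phi$ (gradients are the orthogonal complement of $\mathcal V_\id$), while $\PP f$ is divergence-free with zero normal trace.

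First I would record how $\mathcal V_Z$ relates to $\mathcal V_\id$ via the change of base point: $w \in \mathcal V_Z$ iff $w \circ Z^{-1} \in \mathcal V_\id$, equivalently (pulling the condition $(w, \nabla\phi(Z)) = 0$ through the change of variables, using $\det(DZ) = 1$) the relevant orthogonality is against $DZ^{-T}(\nabla\phi)\circ$ reparametrizations — the point being that the ``divergence-free'' and ``gradient'' decompositions for base point $Z$ differ from those at the identity precisely by the factors $DZ$ and $\cof(DZ)$. So I would compute $\PP_Z f - \PP f$ by applying $\PP_Z$ to $f$, writing $f = \PP f + \nabla\phi$, and noting: (i) $\PP_Z(\nabla\phi)$ need not vanish because $\nabla\phi$ is a gradient with respect to the identity, not with respect to $Z$; the error is governed by how far $\cof(DZ)$ is from $I$, since $\nabla\phi = \cof(DZ)^{-1}\cof(DZ)\nabla\phi$ and $\cof(DZ)\nabla\phi$ (suitably composed) is a ``$Z$-gradient''; (ii) $\PP f - \PP_Z(\PP f)$ is controlled by how far $DZ$ is from $I$, since $\PP f$ fails to lie in $\mathcal V_Z$ only to the extent $DZ \neq I$. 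Each error term, after a change of variables that is measure-preserving (so $L^r$ norms are preserved), picks up one factor of $\norm{\PP}_{r,r}$ from projecting the resulting near-gradient/near-divergence-free field, and a second factor from the $\PP f$ or $\nabla\phi = (I-\PP)f$ already present — giving the quadratic $\norm{\PP}_{r,r}^2$.

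The main obstacle I anticipate is bookkeeping the algebra of the base-point change cleanly: making precise the statement ``$\nabla\phi$ composed/twisted by $\cof(DZ)$ lies in the orthogonal complement $\mathcal V_Z^\perp$'' and ``$\PP f$ twisted by $DZ$ lies in $\mathcal V_Z$,'' so that the difference $\PP_Z - \PP$ applied to $f$ telescopes into two terms, one carrying $\cof(DZ) - I$ and one carrying $DZ - I$, each as an $L^\infty$ multiplier. Once that is set up, the estimate is immediate: bound the multiplier in $L^\infty$, use $\det(DZ)=1$ so the change of variables $g \mapsto g\circ Z^{\pm 1}$ is an $L^r$ isometry, and apply $\norm{\PP}_{r,r}$ twice. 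I would also double-check the edge case $r=2$ where $\norm{\PP}_{2,2}=1$ and the bound should reduce to the elementary Hilbert-space perturbation estimate, as a sanity check on the constants.
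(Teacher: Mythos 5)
Your plan is correct and is essentially the primal form of the paper's dual argument. Where you decompose
\[
\PP_Z f - \PP f \;=\; \PP_Z\big((I-\PP)f\big) \;+\; (\PP_Z - I)\,\PP f
\]
and estimate each piece in $L^r$, the paper instead pairs $(\PP_Z - \PP)f$ against $\xi\circ Z$ for a test function $\xi$, expands both $f$ and $\xi$ into their Hodge pieces, and finds the pairing collapses to $(\nabla\varphi, (\PP\xi)(Z)) - (\PP f, \nabla\psi(Z))$. If you test your two terms against $\xi\circ Z$ and use that $\PP_Z$ is self-adjoint (being $g\mapsto (\PP(g\circ Z^{-1}))\circ Z$ with $Z$ measure preserving), you recover exactly those two pairings, so the algebra is identical. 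The two facts you flagged as ``bookkeeping'' are precisely the paper's two identities: $\nabla(\varphi\circ Z^{-1}) = \cof(DZ)(Z^{-1})\,\nabla\varphi(Z^{-1})$ is a gradient (hence killed by $\PP$), yielding the $\cof(DZ)-I$ factor; and for $w\in\mathcal V_\id$ the push-forward $(DZ\cdot w)\circ Z^{-1}$ is again in $\mathcal V_\id$ (equivalently $(w,\nabla(\psi\circ Z))=0$), yielding the $DZ-I$ factor. In your primal route you must also check the boundary piece of the second fact, namely that $DZ$ carries fields tangent to $\partial\Omega$ to fields tangent to $\partial\Omega$; this holds because $Z|_{\partial\Omega}=\id$. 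The dual form buys only cosmetic symmetry between the two error terms. One caveat shared by both proofs: what each argument literally produces is $\norm{\PP}_{r,r}\norm{I-\PP}_{r,r}$ rather than $\norm{\PP}_{r,r}^2$, since one factor in each product comes from $(I-\PP)$ applied to $f$ or $\xi$; this minor constant discrepancy affects nothing downstream.
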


\begin{proof}  Fix some function $f\in L^r(\Omega)$ and let $\xi$ be a smooth test function. We use the Hodge decomposition to write 
$$f=w+\nabla \vp, \quad w:=\PP f, \qquad \xi=\zeta+\nabla \psi, \quad \zeta:=\PP \xi.$$ 
Now if we test $(\PP_Z-\PP)f$ against $ \xi\circ Z$ we have 
\[
(\PP_Z f-\PP f,  \xi\circ Z)=(f,  \zeta(Z))-(w, \xi(Z))
\]
If we expand $f$ and $\xi$ in terms of their decompositions, the term  $ (w, \zeta(Z))$ appears in both expressions, so we arrive at
\[
 (\nabla \vp, \zeta(Z))-(w,  \nabla\psi(Z))
\]

Now we estimate each term separately.  Pushing forward by $Z$ we see that
\[
(\nabla \vp, \zeta(Z))=(\nabla \vp(Z^{-1}) ,  \zeta).
\]
The closely related quantity, $(\nabla (\vp\circ Z^{-1}), \zeta)$, vanishes. From the fact $\nabla (\vp\circ Z^{-1})=\cof(DZ)\nabla \vp(Z^{-1})$ we see that
\[
(\nabla \vp(Z^{-1}) ,  \zeta)=((I-\cof(DZ))\nabla \vp(Z^{-1}) ,  \zeta)
\]
Similar arguments reveal that
\[
-(w,  \nabla\psi(Z))=-\big(w, ( I-DZ^T) \nabla \psi(Z) \big)
\]
Therefore
\[
(\PP_Z f-\PP f,  \xi\circ Z)\leq \Big(\norm{I-DZ}_{L^{\infty}(\Omega)}+\norm{\cof(DZ)-I}_{L^{\infty}(\Omega)}\Big)\norm{f}_{L^r(\Omega)}\norm{\xi}_{L^{r'}(\Omega)}\norm{\PP}_{r,r}\norm{\PP}_{r',r'}\]
Since $\PP$ is self adjoint, by duality, $\norm{\PP}_{r,r}=\norm{\PP}_{r',r'}$.  $f$ and $\xi$ were arbitrary, so we can conclude the result.\end{proof}

\begin{corollary}\label{cor:lipschitz_estimate}
Suppose $Z_i \in \mathcal D{\rm iff}_{\id}(\Omega)$ for $i=1,2$. Then for $r\in (1,\infty)$ we have 
\[
\norm{\PP_{Z_1}-\PP_{Z_2}}_r\leq \Big( \norm{DZ_1\cof(DZ_2)^T-I}_{L^{\infty}(\Omega)}+\norm{\cof(DZ_1)DZ_2^T-I}_{L^{\infty}(\Omega)}\Big)\norm{\PP}_{r,r}^2.
\]
Furthermore, if $u\in C^1_0(\Omega)$ is divergence free, then
\[
\norm{d\PP(Z;u\circ Z)}_{r,r}\leq 2\norm{\PP}_{r,r}^2\norm{Du}_{L^{\infty}(\Omega)}
\]
for any $Z\in \mathcal D{\rm iff}_{\id}(\Omega)$.
\end{corollary}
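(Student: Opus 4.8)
The plan is to reduce both inequalities to Lemma~\ref{lem:april12.2020} by a change of base point. For the first bound, put $W:=Z_1\circ Z_2^{-1}$, which again belongs to $\mathcal D{\rm iff}_{\id}(\Omega)$. A direct manipulation of the change-of-base-point formula gives $\PP_{Z_1}=(\PP_W)_{Z_2}$, while trivially $\PP_{Z_2}=(\PP)_{Z_2}$; hence $\PP_{Z_1}-\PP_{Z_2}=(\PP_W-\PP)_{Z_2}$. Since $Z_2$ is volume preserving, $f\mapsto f\circ Z_2$ and $f\mapsto f\circ Z_2^{-1}$ are isometries of $L^r(\Omega)$, so $\norm{\PP_{Z_1}-\PP_{Z_2}}_r=\norm{\PP_W-\PP}_{r,r}$ and Lemma~\ref{lem:april12.2020} applies verbatim to $W$. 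It then remains to identify the two $L^\infty$ quantities in terms of $Z_1,Z_2$: from the chain rule and $(DZ_2)^{-1}=\cof(DZ_2)^T$ (valid because $\det DZ_2=1$) one gets $DW=\big(DZ_1\,\cof(DZ_2)^T\big)\circ Z_2^{-1}$, and, combining $\cof(AB)=\cof(A)\cof(B)$ with the identity $\cof(\cof M)=(\det M)^{d-2}M$ and $\det DZ_2=1$, one gets $\cof(DW)=\big(\cof(DZ_1)\,DZ_2^T\big)\circ Z_2^{-1}$. Precomposition with the bijection $Z_2^{-1}$ of $\bar\Omega$ leaves sup-norms unchanged, which is precisely the claimed inequality.

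For the second bound I would compute $d\PP(Z;u\circ Z)$ along the measure-preserving flow $\Psi_t$ of $u$, i.e. $\partial_t\Psi_t=u\circ\Psi_t$, $\Psi_0=\id$, rather than along the straight segment $Z+t\,u\circ Z$, which is only volume preserving to second order in $t$. Since $u\in C^1_0(\Omega)$ is divergence free, $\Psi_t\in\mathcal D{\rm iff}_{\id}(\Omega)$ and $t\mapsto\Psi_t\circ Z$ is an admissible $C^1$ curve through $Z$ with velocity $u\circ Z$ (cf. the construction of Appendix~\ref{sec:standard}). As before, $\PP_{\Psi_t\circ Z}=(\PP_{\Psi_t})_Z$, so $\norm{\PP_{\Psi_t\circ Z}-\PP_Z}_{r,r}=\norm{\PP_{\Psi_t}-\PP}_{r,r}$, and Lemma~\ref{lem:april12.2020} bounds this by $\big(\norm{I-D\Psi_t}_{L^\infty(\Omega)}+\norm{\cof(D\Psi_t)-I}_{L^\infty(\Omega)}\big)\norm{\PP}_{r,r}^2$. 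Taylor expanding, $D\Psi_t=I+t\,Du+o(t)$ and $\cof(D\Psi_t)=I+t\big((\div u)I-Du^T\big)+o(t)=I-t\,Du^T+o(t)$, the second expansion being where the divergence-free hypothesis enters; dividing by $t$ and letting $t\to0^+$ gives $\norm{d\PP(Z;u\circ Z)}_{r,r}\le\big(\norm{Du}_{L^\infty(\Omega)}+\norm{Du^T}_{L^\infty(\Omega)}\big)\norm{\PP}_{r,r}^2=2\norm{\PP}_{r,r}^2\norm{Du}_{L^\infty(\Omega)}$.

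All the steps are short. The points that require care are purely bookkeeping: verifying the identities $\PP_{Z_1}=(\PP_W)_{Z_2}$ and $\cof(\cof M)=(\det M)^{d-2}M$, and making sure that every base-point change is effected by a volume-preserving map, so that it is an $L^r$-isometry and not merely a bounded operator. If there is any obstacle it is conceptual rather than technical: one must resist expanding $\PP_Z$ along the naive segment $Z+t\,v$ (to which Lemma~\ref{lem:april12.2020} does not apply) and instead use the measure-preserving flow of $u$, since it is exactly the cancellation $\div u=0$ in the first-order expansion of $\cof(D\Psi_t)$ that kills the $(\div u)I$ term and produces the clean constant $2$ rather than something dimension dependent.
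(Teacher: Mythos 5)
Your proof is correct and follows essentially the same route as the paper: both reduce the first estimate to Lemma~\ref{lem:april12.2020} via the change of base point $Y=Z_1\circ Z_2^{-1}$ (the paper writes this out via $g=f\circ Z_2$ rather than the operator identity $\PP_{Z_1}=(\PP_Y)_{Z_2}$, but it is the same computation), and both obtain the second estimate by differentiating along the measure-preserving flow of $u$ rather than the straight segment. The only cosmetic difference is that in the second part you apply Lemma~\ref{lem:april12.2020} to $\Psi_t$ directly and use $\div u=0$ in the expansion of $\cof(D\Psi_t)$, while the paper applies the first part of the Corollary to $Z(t)=\Psi_t\circ Z$ and uses $\det(DZ(t))=1$; these encode the same cancellation.
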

\begin{proof}  

 Fix some function $f\in L^r(\Omega)$ and let $g=f\circ Z_2$.   Writing things in terms of $g$ we have $\PP_{Z_1}f=(\PP (g\circ Z_2\circ (Z_1)^{-1}))\circ Z_1$ and $\PP_{Z_2}f=(\PP g)\circ Z_2$.  Therefore,
 \[
 \norm{\PP_{Z_2}f-\PP_{Z_1}f}_{L^r(\Omega)}=\norm{(\PP g-\PP_{Y}g)\circ Z_2}_{L^r(\Omega)}
 \]
 where $Y=Z_1\circ Z_2^{-1}$.  Since $Z_2$ is measure preserving, we have 
 \[
 \norm{(\PP g-\PP_{Y}g)\circ Z_2}_{L^r(\Omega)}=\norm{\PP g-\PP_{Y}g}_{L^r(\Omega)}. 
 \]
 From the previous Lemma we get the bound 
 \[
 \norm{\PP g-\PP_{Y}g}_{L^r(\Omega)}\leq \big(\norm{DY-I}_{L^{\infty}(\Omega)}+\norm{\cof(DY)-I}_{L^{\infty}(\Omega)}\big)\norm{g}_{L^r(\Omega)}\norm{\PP}_{r,r}^2.
 \]
 We can then compute $DY(Z_2)=DZ_1\cof(DZ_2)^T$ and $\cof(DY(Z_2))=\cof(DZ_1)DZ_2^T$.  Recalling that $g=f\circ Z_2^{-1}$, we can conclude that 
 \[
 \begin{split}
& \big(\norm{DY-I}_{L^{\infty}(\Omega)}+\norm{\cof(DY)-I}_{L^{\infty}(\Omega)}\big)\norm{g}_{L^r(\Omega)}\\
\leq &
  \big(\norm{DZ_1\cof(DZ_2)^T-I}_{L^{\infty}(\Omega)}+\norm{\cof(DZ_1)DZ_2^T-I}_{L^{\infty}(\Omega)}\big)\norm{f}_{L^r(\Omega)}.
  \end{split}
 \]
Since $f$ is arbitrary, we can conclude the first result.

For the second result, using Appendix \ref{sec:standard}, we can construct a $C^1$ curve  $Z(t):[0,1]\to\mathcal{M}$ such that $Z(0)=Z$ and $Z'(0)=u\circ Z$.  We then have 
\[
\norm{\PP_{Z(t)}-\PP_{Z}}_r\leq \Big( \norm{DZ(t)\cof(DZ)^T-I}_{L^{\infty}(\Omega)}+\norm{\cof(DZ(t))DZ^T-I}_{L^{\infty}(\Omega)}\Big)\norm{\PP}_{r,r}^2.
\]
Therefore, 
\[
\norm{d\PP(Z;u\circ Z)}_{r,r}\leq\lim_{t\to 0^+} t^{-1}\Big( \norm{DZ(t)\cof(DZ)^T-I}_{L^{\infty}(\Omega)}+\norm{\cof(DZ(t))DZ^T-I}_{L^{\infty}(\Omega)}\Big)\norm{\PP}_{r,r}^2.
\]
We can then write
\[
DZ(t)=DZ+tDu(Z)DZ+o(t),
\]
and
\[
\cof(DZ(t))DZ^T-I=\cof(DZ(t))\big(DZ^T-DZ(t)^T\big).
\]
Thus, 
\[
\lim_{t\to 0^+} t^{-1}\Big( \norm{DZ(t)\cof(DZ)^T-I}_{L^{\infty}(\Omega)}+\norm{\cof(DZ(t))DZ^T-I}_{L^{\infty}(\Omega)}\Big)=2\norm{Du}_{L^{\infty}(\Omega)},
\]
and the second result now follows.

\end{proof}

We will use the following lemma to estimate the remaining terms in (\ref{eq:specific_slope_ineq}) involving $u$.

\begin{lemma}\label{lem:map_d_compose}
If $f\in W^{2,r}(\Omega)$ is a scalar function and $Z\in\mathcal{M}$ then for any indices $1\leq i,j\leq d$, we have
\[
\norm{\partial_{i,j}^2 (f\circ Z_{\lambda})-(\partial^2_{i,j} f)\circ Z_{\lambda}}_{L^r(\Omega)}\leq \norm{\nabla f}_{L^{\infty}(\Omega)}\norm{\partial^2_{i,j} Z}_{L^r(\Omega)}+\norm{D^2 f}_{L^r(\Omega)}\norm{\partial_i Z\otimes \partial_j Z-e_i\otimes e_j}_{L^{\infty}(\Omega)}
\]
and we have 
\[
\norm{\Delta (f\circ Z_{\lambda})-(\Delta f)\circ Z_{\lambda}}_{L^r(\Omega)}\leq \norm{\nabla f}_{L^{\infty}(\Omega)}\norm{\Delta Z}_{L^r(\Omega)}+\norm{D^2 f}_{L^r(\Omega)}\norm{DZDZ^T-I}_{L^{\infty}(\Omega)}
\]

where $e_i$ is the $i^{th}$ standard basis vector.
\end{lemma}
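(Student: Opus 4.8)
The plan is to prove the two estimates of Lemma~\ref{lem:map_d_compose} by a direct computation with the chain rule, writing everything as a remainder and then bounding each piece in $L^r$ using H\"older's inequality with the $L^\infty$/$L^r$ split suggested by the statement. The only subtlety is bookkeeping, so I will organize the computation carefully.

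First I would compute the second derivatives of $f\circ Z$ via the chain rule. Writing $Z=(Z^1,\dots,Z^d)$, we have $\partial_i(f\circ Z)=\sum_k (\partial_k f)(Z)\,\partial_i Z^k$, and differentiating again,
\[
\partial^2_{ij}(f\circ Z)=\sum_{k,\ell} (\partial^2_{k\ell}f)(Z)\,\partial_i Z^k\,\partial_j Z^\ell + \sum_k (\partial_k f)(Z)\,\partial^2_{ij}Z^k.
\]
Now I would subtract the ``naive'' term $(\partial^2_{ij}f)(Z)=\sum_{k,\ell}(\partial^2_{k\ell}f)(Z)\,(e_i)_k(e_j)_\ell$ and regroup, so that the difference splits into exactly two terms: the ``lower order'' piece $\sum_k(\partial_k f)(Z)\,\partial^2_{ij}Z^k$, and the ``Hessian error'' piece $\sum_{k,\ell}(\partial^2_{k\ell}f)(Z)\big(\partial_i Z^k\,\partial_j Z^\ell-(e_i)_k(e_j)_\ell\big)$. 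The first is pointwise bounded by $|\nabla f(Z)|\,|\partial^2_{ij}Z|$, and since $Z$ is measure preserving, $\|\,|\nabla f(Z)|\,|\partial^2_{ij}Z|\,\|_{L^r}\leq \|\nabla f\|_{L^\infty}\|\partial^2_{ij}Z\|_{L^r}$; more precisely $\|\nabla f(Z)\|_{L^\infty(\Omega)}=\|\nabla f\|_{L^\infty(\Omega)}$ because $Z$ maps $\Omega$ onto $\Omega$, so H\"older immediately gives the claimed first term. The second piece is pointwise bounded by $|D^2 f(Z)|\,|\partial_i Z\otimes\partial_j Z-e_i\otimes e_j|$, and again using that $Z$ is measure preserving, $\|D^2 f(Z)\|_{L^r(\Omega)}=\|D^2 f\|_{L^r(\Omega)}$, so H\"older yields the second term. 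Summing the two bounds via the triangle inequality gives the first inequality of the lemma.

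For the Laplacian estimate I would simply sum the pointwise identity over $i=j$: $\Delta(f\circ Z)-(\Delta f)(Z)=\sum_k (\partial_k f)(Z)\,\Delta Z^k+\sum_{k,\ell}(\partial^2_{k\ell}f)(Z)\big(\sum_i \partial_i Z^k\,\partial_i Z^\ell-\delta_{k\ell}\big)$. The matrix $\sum_i \partial_i Z^k\,\partial_i Z^\ell$ is exactly $(DZ\,DZ^T)_{k\ell}$, so the error matrix appearing in the second sum is $DZ\,DZ^T-I$. Bounding the first sum pointwise by $|\nabla f(Z)|\,|\Delta Z|$ and the second by $|D^2 f(Z)|\,|DZ DZ^T-I|$, then applying H\"older and the measure-preserving identities as before, gives
\[
\norm{\Delta(f\circ Z)-(\Delta f)\circ Z}_{L^r(\Omega)}\leq \norm{\nabla f}_{L^{\infty}(\Omega)}\norm{\Delta Z}_{L^r(\Omega)}+\norm{D^2 f}_{L^r(\Omega)}\norm{DZ DZ^T-I}_{L^{\infty}(\Omega)},
\]
as desired. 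One technical point I would address first: the chain rule identities above are classical for smooth $f$ and smooth $Z$, so I would obtain the inequalities for smooth data and then pass to the limit by density of smooth functions in $W^{2,r}(\Omega)$ (for $f$) and in $\mathcal{M}$ with the $\mathcal{X}_a$ norm (for $Z$), using that $W^{2,r}\hookrightarrow W^{1,\infty}$ in dimensions $d\in\{2,3\}$ with $r>d$ so that all the $L^\infty$ norms on the right are continuous under this approximation. The main obstacle is really just this density/approximation step together with keeping the index bookkeeping in the chain rule transparent; there is no analytic difficulty beyond H\"older's inequality and the change-of-variables identity $\int_\Omega g\circ Z = \int_\Omega g$ for measure-preserving $Z$.
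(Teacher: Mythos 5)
Your proof is correct and takes essentially the same route as the paper: the chain-rule decomposition into a $\nabla f(Z)\cdot\partial^2_{ij}Z$ term plus a $D^2f(Z):(\partial_i Z\otimes\partial_j Z-e_i\otimes e_j)$ term, followed by H\"older with the $L^\infty$/$L^r$ split and the measure-preserving change of variables, is exactly what the paper does, with the Laplacian bound obtained by tracing $i=j$. The only (harmless) addition on your side is the explicit density/approximation remark, which the paper leaves implicit.
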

\begin{proof}
Computing directly, we have 
\[
\partial_{i,j}^2 (f\circ Z)=D^2f(Z):\partial_i Z\otimes \partial_j Z+\nabla f(Z)\cdot \partial^2_{i,j} Z.
\]
Writing $\partial_{i,j}^2 f=D^2 f:e_i\otimes e_j$, we see that 
\[
\partial_{i,j}^2 (f\circ Z)-(\partial^2_{i,j} f)\circ Z=D^2f(Z):(\partial_i Z\otimes \partial_j Z-e_i\otimes e_j)+\nabla f(Z)\cdot \partial^2_{i,j} Z.
\]
Hence, 
\[
\Delta (f\circ Z)-(\Delta f)\circ Z=D^2f(Z):(DZDZ^T-I)+\nabla f(Z)\cdot \Delta Z
\]
The result now follows from Holder's inequality and the fact that $Z$ is a measure preserving map.
\end{proof}

We can now state a version of Proposition \ref{prop:specific_slope_bound} that eliminates the dependence on the Stokes resolvent solution $u$.  Recall  that $\mathcal{M}$ is given in \eqref{spaces}.
\begin{prop}\label{prop:simplified_slope_bound}
Given a point $\tilde{Z}\in\mathcal{M}$ and some $\lambda>0$, let $Z_{\lambda}$ be the point chosen by Ekeland's variational principle starting from $\tilde{Z}$. Define 
\[
K_a:=\sup_{f\in W^{2,r}(\Omega)}\frac{\norm{Df}_{L^{\infty}(\Omega)}}{\norm{f}_{\mathcal{X}_a}}.
\]
and
\[
C_r:=\max(\norm{\PP}_{r,r}, \bar{C}_r),
\]
where $\bar{C}_r$ is the constant in (\ref{stokes}).
If $F(Z_{\lambda})\neq 0$, then
\begin{multline}\label{eq:specific_slope_ineq}
-1+C_r^2\Big(\norm{DZ_{\lambda}DZ_{\lambda}^T-I}_{L^{\infty}(\Omega)}+aK_a\norm{\Delta Z_{\lambda}}_{L^r(\Omega)}\Big)+2K_aC_r^3\norm{(I-a\Delta)(Z_{\lambda}-S)}_{L^r(\Omega)}\geq \\
-C_r\frac{F(\tilde{Z})}{\lambda}\Big(1+\sum_{i,j=1}^d\norm{\partial_i Z_{\lambda}\otimes \partial_j Z_{\lambda}-e_i\otimes e_j}_{L^{\infty}(\Omega)}+aK_a\norm{D^2 Z_{\lambda}}_{L^r(\Omega)}\Big)
\end{multline}
\end{prop}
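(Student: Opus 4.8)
The plan is to start from the slope inequality in Proposition~\ref{prop:specific_slope_bound} and estimate each of its three ``error'' terms by the quantities that appear in \eqref{eq:specific_slope_ineq}, using the lemmas just established. First I would handle the Stokes-commutator term $a\norm{\PP_{Z_\lambda}\big(\Delta(u\circ Z_\lambda)-(\Delta u)\circ Z_\lambda\big)}_{L^r(\Omega)}$: by \eqref{eq:april26.2020.1} this is bounded by $a\norm{\PP}_{r,r}\norm{\Delta(u\circ Z_\lambda)-(\Delta u)\circ Z_\lambda}_{L^r(\Omega)}$, and then Lemma~\ref{lem:map_d_compose} (second inequality, applied componentwise to the vector field $u$) bounds this by $a\norm{\PP}_{r,r}\big(\norm{Du}_{L^\infty(\Omega)}\norm{\Delta Z_\lambda}_{L^r(\Omega)}+\norm{D^2u}_{L^r(\Omega)}\norm{DZ_\lambda DZ_\lambda^T-I}_{L^\infty(\Omega)}\big)$. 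Now I invoke the Stokes resolvent bound Lemma~\ref{stokes}: since $w=\Phi(Z_\lambda)/F(Z_\lambda)\in\mathcal V$ has $\norm{w}_{L^r(\Omega)}=1$, we get $\norm{u}_{L^r(\Omega)}+a\norm{D^2u}_{L^r(\Omega)}\le\bar C_r$, i.e.\ $\norm{u}_{\mathcal X_a}\le\bar C_r$, and hence $\norm{Du}_{L^\infty(\Omega)}\le K_a\norm{u}_{\mathcal X_a}\le K_a\bar C_r$ and $a\norm{D^2u}_{L^r(\Omega)}\le\bar C_r$. Folding in $C_r=\max(\norm{\PP}_{r,r},\bar C_r)$ turns this term into $C_r^2\big(\norm{DZ_\lambda DZ_\lambda^T-I}_{L^\infty(\Omega)}+aK_a\norm{\Delta Z_\lambda}_{L^r(\Omega)}\big)$, which is exactly the second group on the left of \eqref{eq:specific_slope_ineq}.

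Next I would treat the term $\norm{d\PP(Z_\lambda;u\circ Z_\lambda)}_{r,r}\norm{(I-a\Delta)(Z_\lambda-S)}_{L^r(\Omega)}$. By Corollary~\ref{cor:lipschitz_estimate} (second statement), $\norm{d\PP(Z_\lambda;u\circ Z_\lambda)}_{r,r}\le 2\norm{\PP}_{r,r}^2\norm{Du}_{L^\infty(\Omega)}\le 2\norm{\PP}_{r,r}^2K_a\bar C_r\le 2K_aC_r^3$, again by the Stokes bound and the definition of $C_r$; this produces the third left-hand term $2K_aC_r^3\norm{(I-a\Delta)(Z_\lambda-S)}_{L^r(\Omega)}$. (One small point to check: Corollary~\ref{cor:lipschitz_estimate} is stated for $u\in C^1_0(\Omega)$, whereas here $u$ is a Stokes resolvent solution in $K_r$; I would note that $K_r\hookrightarrow W^{1,\infty}$ is not automatic, but $u\in W^{2,r}(\Omega)$ with $r>d$ does embed into $C^1$, and the $C^1_0$ hypothesis can be relaxed to $W^{2,r}\cap W^{1,r}_0$ divergence-free by the same proof, or by density.)

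Finally I would bound the right-hand side. From Proposition~\ref{prop:specific_slope_bound} the right side is $-\frac{F(\tilde Z)}{\lambda}\norm{u\circ Z_\lambda}_{\mathcal X_a}$, so I need $\norm{u\circ Z_\lambda}_{\mathcal X_a}=\norm{u\circ Z_\lambda}_{L^r(\Omega)}+a\norm{D^2(u\circ Z_\lambda)}_{L^r(\Omega)}$. Since $Z_\lambda$ is measure preserving, $\norm{u\circ Z_\lambda}_{L^r(\Omega)}=\norm{u}_{L^r(\Omega)}\le C_r$. For the second-derivative term I write $\partial^2_{ij}(u\circ Z_\lambda)=\big(\partial^2_{ij}(u\circ Z_\lambda)-(\partial^2_{ij}u)\circ Z_\lambda\big)+(\partial^2_{ij}u)\circ Z_\lambda$; the second piece has $L^r$ norm $\le\norm{D^2u}_{L^r(\Omega)}\le\bar C_r/a\le C_r/a$ (measure preservation again), and the first piece is controlled by Lemma~\ref{lem:map_d_compose} (first inequality) by $\norm{Du}_{L^\infty(\Omega)}\norm{\partial^2_{ij}Z_\lambda}_{L^r(\Omega)}+\norm{D^2u}_{L^r(\Omega)}\norm{\partial_iZ_\lambda\otimes\partial_jZ_\lambda-e_i\otimes e_j}_{L^\infty(\Omega)}$. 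Summing over $i,j$, using $\norm{Du}_{L^\infty(\Omega)}\le K_a\bar C_r\le K_aC_r$ and $a\norm{D^2u}_{L^r(\Omega)}\le C_r$, and multiplying through by $a$, the $a\norm{D^2(u\circ Z_\lambda)}_{L^r(\Omega)}$ contribution is dominated by $C_r\big(1+aK_a\norm{D^2Z_\lambda}_{L^r(\Omega)}+\sum_{i,j}\norm{\partial_iZ_\lambda\otimes\partial_jZ_\lambda-e_i\otimes e_j}_{L^\infty(\Omega)}\big)$. Collecting everything, $\norm{u\circ Z_\lambda}_{\mathcal X_a}\le C_r\big(1+\sum_{i,j=1}^d\norm{\partial_iZ_\lambda\otimes\partial_jZ_\lambda-e_i\otimes e_j}_{L^\infty(\Omega)}+aK_a\norm{D^2Z_\lambda}_{L^r(\Omega)}\big)$, which is precisely the factor on the right of \eqref{eq:specific_slope_ineq}; substituting these bounds into \eqref{eq:specific_slope_ineq0} finishes the proof. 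The main obstacle I anticipate is purely bookkeeping: making sure every constant is absorbed into the single symbol $C_r$ with the stated powers ($C_r^2$, $C_r^3$, $C_r$) and that the measure-preservation identity is applied correctly to each composed norm — the analytic content is entirely supplied by the preceding lemmas, so no new estimate is needed.
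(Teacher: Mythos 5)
Your proof is correct and follows essentially the same route as the paper's: substitute into \eqref{eq:specific_slope_ineq0}, use $\norm{\PP_{Z_\lambda}}_{r,r}=\norm{\PP}_{r,r}$ (by measure preservation), control the Laplacian-commutator and $\norm{u\circ Z_\lambda}_{\mathcal X_a}$ terms via Lemma~\ref{lem:map_d_compose}, bound $\norm{d\PP(Z_\lambda;u\circ Z_\lambda)}_{r,r}$ via Corollary~\ref{cor:lipschitz_estimate}, and absorb everything with $\norm{u}_{\mathcal X_a}\le\bar C_r$, $\norm{Du}_{L^\infty}\le K_a\norm{u}_{\mathcal X_a}$, and $C_r=\max(\norm{\PP}_{r,r},\bar C_r)$. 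Your side remark that the $C^1_0$ hypothesis in Corollary~\ref{cor:lipschitz_estimate} should really be $W^{2,r}\cap W^{1,r}_0$, divergence free, is a fair observation the paper passes over silently, and you resolve it correctly.
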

\begin{proof}
Let $u$ and $w$ be defined as in  Proposition \ref{prop:specific_slope_bound}, and recall that we have the inequality
\[
-1+a\norm{\PP}_{r,r}\norm{\Delta(u\circ Z_{\lambda})-(\Delta u)\circ Z_{\lambda}}_{L^r(\Omega)}+\norm{d\PP(Z_{\lambda};u\circ Z_{\lambda})}_{r,r}\norm{(I-a\Delta)(Z_{\lambda}-S)}_{L^r(\Omega)}\geq -\frac{F(\tilde{Z})}{\lambda}\norm{u\circ Z_{\lambda}}_{\mathcal{X}_a}.
\]
Since
\[
\norm{u\circ Z_{\lambda}}_{\mathcal{X}_a}=\norm{u\circ Z_{\lambda}}_{L^r(\Omega)}+a\norm{D^2(u\circ Z_{\lambda})}_{L^r(\Omega)},
\]
we can use the measure preserving property of $Z_{\lambda}$ and the triangle inequality to estimate
\[
\norm{u\circ Z_{\lambda}}_{\mathcal{X}_a}\leq\norm{u}_{\mathcal{X}_a}+a\norm{(D^2 u)\circ Z_{\lambda}-D^2(u\circ Z_{\lambda})}_{L^r(\Omega)}.
\]
Thanks to Lemma \ref{lem:map_d_compose}, we have 
\[
\norm{\Delta(u\circ Z_{\lambda})-(\Delta u)\circ Z_{\lambda}}_{L^r(\Omega)}\leq \norm{DZ_{\lambda}DZ_{\lambda}^T-I}_{L^{\infty}(\Omega)}\norm{D^2 u}_{L^r(\Omega)}+\norm{Du}_{L^{\infty}(\Omega)}\norm{\Delta Z_{\lambda}}_{L^r(\Omega)},
\]
and 
\[
\norm{(D^2 u)\circ Z_{\lambda}-D^2(u\circ Z_{\lambda})}_{L^r(\Omega)}\leq \norm{D^2 u}_{L^r(\Omega)}\sum_{i,j=1}^d\norm{\partial_i Z_{\lambda}\otimes \partial_j Z_{\lambda}-e_i\otimes e_j}_{L^{\infty}(\Omega)}+\norm{Du}_{L^{\infty}(\Omega)}\norm{D^2 Z_{\lambda}}_{L^r(\Omega)}
\]
Thus, 
\[
a\norm{\Delta(u\circ Z_{\lambda})-(\Delta u)\circ Z_{\lambda}}_{L^r(\Omega)}\leq \norm{u}_{\mathcal{X}_a}\Big(\norm{DZ_{\lambda}DZ_{\lambda}^T-I}_{L^{\infty}(\Omega)}+aK_a\norm{\Delta Z_{\lambda}}_{L^r(\Omega)}\Big),
\]
and
\[
a\norm{(D^2 u)\circ Z_{\lambda}-D^2(u\circ Z_{\lambda})}_{L^r(\Omega)}\leq \norm{ u}_{\mathcal{X}_a}\big(\sum_{i,j=1}^d\norm{\partial_i Z_{\lambda}\otimes \partial_j Z_{\lambda}-e_i\otimes e_j}_{L^{\infty}(\Omega)}+aK_a\norm{D^2 Z_{\lambda}}_{L^r(\Omega)}\big).
\]
From Corollary \ref{cor:lipschitz_estimate}, we have
\[
\norm{d\PP(Z_{\lambda};u\circ Z_{\lambda})}_{r,r}\leq 2\norm{\PP}_{r,r}^2\norm{Du}_{L^{\infty}(\Omega)}\leq 2K_a\norm{\PP}_{r,r}^2\norm{u}_{\mathcal{X}_a}
\]
From the definition of $u$ and $w$, and the bound (\ref{eqn:sobolev3}), we have 
\[
\norm{u}_{\mathcal{X}_a}\leq \bar{C}_r\norm{w}_{L^r(\Omega)}=\bar{C}_r.
\]
Thus, combining our work, we can conclude that
\[
-1+\norm{\PP}_{r,r}\bar{C}_r\Big(\norm{DZ_{\lambda}DZ_{\lambda}^T-I}_{L^{\infty}(\Omega)}+aK_a\norm{\Delta Z_{\lambda}}_{L^r(\Omega)}\Big)+2K_a\norm{\PP}_{r,r}^2\bar{C}_r\norm{(I-a\Delta)(Z_{\lambda}-S)}_{L^r(\Omega)}\geq 
\]
\[
-\bar{C}_r\frac{F(\tilde{Z})}{\lambda}\Big(1+\sum_{i,j=1}^d\norm{\partial_i Z_{\lambda}\otimes \partial_j Z_{\lambda}-e_i\otimes e_j}_{L^{\infty}(\Omega)}+aK_a\norm{D^2 Z_{\lambda}}_{L^r(\Omega)}\Big)
\]
The result now follows from the definition of $C_r$. 
\end{proof}

We conclude this subsection with an estimate for $K_a$.
\begin{lemma} \label{lem:B_and_K_estimates} 
If $K_a$ is defined as in Proposition \ref{prop:simplified_slope_bound}, 
then
\begin{equation}\label{K_a_bd}
K_a\lesssim_{d,r} a^{-\frac{d+r}{2r}}.
\end{equation}
\end{lemma}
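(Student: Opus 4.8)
The plan is to bound $K_a = \sup_{f\in W^{2,r}(\Omega)}\frac{\norm{Df}_{L^\infty(\Omega)}}{\norm{f}_{\mathcal X_a}}$ by the Gagliardo--Nirenberg interpolation inequality, tracking carefully how the parameter $a$ enters through the weighted norm $\norm{f}_{\mathcal X_a}=\norm{f}_{L^r(\Omega)}+a\norm{D^2 f}_{L^r(\Omega)}$. Since $r>d$, the space $W^{2,r}(\Omega)$ embeds into $W^{1,\infty}(\Omega)$, so the supremum is finite; the only issue is the dependence on $a$.

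First I would recall the Gagliardo--Nirenberg inequality on the bounded smooth domain $\Omega$: there is a constant $C=C(d,r,\Omega)$ such that, for all $f\in W^{2,r}(\Omega)$,
\[
\norm{Df}_{L^\infty(\Omega)} \le C\,\norm{D^2 f}_{L^r(\Omega)}^{\theta}\,\norm{f}_{L^r(\Omega)}^{1-\theta} + C\,\norm{f}_{L^r(\Omega)},
\]
where the interpolation exponent $\theta$ is fixed by the scaling relation on $\RR^d$: matching dimensions, $-1 = \theta\,(2 - d/r) + (1-\theta)(-d/r)$, which gives $2\theta = 1 + d/r$, i.e. $\theta = \frac{d+r}{2r}$. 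Note $\theta\in(1/2,1)$ precisely because $d<r$, so the inequality is a genuine interpolation estimate and the lower-order term is legitimate.

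Next I would rescale to extract the power of $a$. Apply Young's inequality in the form $x^\theta y^{1-\theta}\le \theta\, t\, x + (1-\theta)\, t^{-\theta/(1-\theta)} y$ with the choice $t = a$ applied to $x = \norm{D^2 f}_{L^r(\Omega)}$ and $y = \norm{f}_{L^r(\Omega)}$; this yields
\[
\norm{D^2 f}_{L^r(\Omega)}^{\theta}\norm{f}_{L^r(\Omega)}^{1-\theta} \lesssim a\,\norm{D^2 f}_{L^r(\Omega)} + a^{-\theta/(1-\theta)}\norm{f}_{L^r(\Omega)}.
\]
Since $a>0$ should be thought of as small (it corresponds to $\mu\tau$ in the Navier--Stokes application), $a^{-\theta/(1-\theta)}\ge 1$ up to harmless constants, and a short computation gives $\tfrac{\theta}{1-\theta} = \tfrac{d+r}{r-d}$; combining with the lower-order term $\norm{f}_{L^r}$ and absorbing it, one gets $\norm{Df}_{L^\infty(\Omega)} \lesssim_{d,r} a^{-\theta/(1-\theta)}\big(a\norm{D^2 f}_{L^r(\Omega)} + \norm{f}_{L^r(\Omega)}\big)$. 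Hmm — this produces the exponent $\tfrac{d+r}{r-d}$ rather than $\tfrac{d+r}{2r}$, so the naive Young's-inequality split is too lossy. The correct route is instead to \emph{not} symmetrize: keep $\norm{Df}_{L^\infty}\le C\norm{D^2 f}_{L^r}^\theta\norm{f}_{L^r}^{1-\theta}+C\norm{f}_{L^r}$, and then directly estimate the ratio by writing $\norm{D^2 f}_{L^r}^\theta\norm{f}_{L^r}^{1-\theta} = a^{-\theta}\,(a\norm{D^2 f}_{L^r})^\theta\,\norm{f}_{L^r}^{1-\theta} \le a^{-\theta}\big(a\norm{D^2 f}_{L^r}+\norm{f}_{L^r}\big) = a^{-\theta}\norm{f}_{\mathcal X_a}$, using $x^\theta y^{1-\theta}\le x+y$; the lower-order term $\norm{f}_{L^r}\le \norm{f}_{\mathcal X_a}\le a^{-\theta}\norm{f}_{\mathcal X_a}$ is dominated the same way since $a^{-\theta}\ge 1$. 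Dividing by $\norm{f}_{\mathcal X_a}$ gives $K_a \lesssim_{d,r} a^{-\theta} = a^{-\frac{d+r}{2r}}$, as claimed.

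The main obstacle — and the only point requiring care — is getting the interpolation exponent $\theta$ right and splitting the product so that exactly one factor of the $a$-weighted second-derivative norm appears; any cruder application of Young's inequality (as in my first attempt above) inflates the exponent. A secondary technical point is that the Gagliardo--Nirenberg inequality on a bounded domain carries an additive lower-order term (unlike on $\RR^d$), but as shown this term is harmless because $a^{-\theta}\ge 1$ for the relevant range of $a$ and $\norm{f}_{L^r(\Omega)}\le\norm{f}_{\mathcal X_a}$. One should also note $\theta<1$ strictly (since $r>d$), which is what makes the lower-order term and the product estimate legitimate; if $r=d$ the embedding into $W^{1,\infty}$ would fail outright.
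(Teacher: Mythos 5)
Correct, and essentially the same argument as the paper: apply Gagliardo--Nirenberg with $\theta = \frac{d+r}{2r}$ and then the weighted AM--GM split $\norm{D^2 f}_{L^r}^{\theta}\norm{f}_{L^r}^{1-\theta} = a^{-\theta}(a\norm{D^2 f}_{L^r})^{\theta}\norm{f}_{L^r}^{1-\theta} \le a^{-\theta}\norm{f}_{\mathcal{X}_a}$. You are in fact slightly more careful than the paper, which writes the GNS inequality on the bounded domain $\Omega$ without the additive lower-order term $\norm{f}_{L^r(\Omega)}$ and thus tacitly assumes $a\lesssim 1$; as you observe, the stated bound cannot hold uniformly for large $a$ (take $f$ affine, so $D^2 f=0$ but $Df\neq 0$), but this is harmless in the application where $a=\mu\tau$ is small.
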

\begin{proof}
By the Gagliardo-Nirenberg interpolation inequality, we have
\[
\norm{D f}_{L^{\infty}(\Omega)}\lesssim_{d,r}  \norm{D^2 f}_{L^r(\Omega)}^{\frac{d+r}{2r}}\norm{f}_{L^r(\Omega)}^{\frac{r-d}{2r}}\leq a^{-\frac{d+r}{2r}}\norm{f}_{\mathcal{X}_a}.
\]
Hence, $K_a\lesssim_{d,r} a^{-\frac{d+r}{2r}}$.
\end{proof}

\subsection{Construction of the starting point $\tilde{Z}$ and the proof of Theorem~\ref{thm:main_2}} 

In order to prove the failure of inequality (\ref{eq:specific_slope_ineq}), it remains to choose an appropriate starting point $\tilde{Z}$.

Let us define $u^*$ to be the solution to the Stokes resolvent problem
\begin{equation}\label{eq:u_star_def}
(I +a\mathcal A) u^*+\mathbb P(I-\Delta)(\id-S)=0, \quad \nabla \cdot u^*=0,\quad u^*|_{\partial\Omega}=0.
\end{equation}
We now apply Lemma \ref{stokes} to obtain that
\begin{equation}\label{eq:linearized_projectionII}
\norm{u^*}_{L^r(\Omega)}+ \norm{aD^2 u^*}_{L^r(\Omega)} \lesssim_{d, r} \norm{(I-a\Delta)(S-\id)}_{L^r(\Omega)} .
\end{equation}
Thus, 
\begin{equation}\label{eq:linearized_projectionII2}
\norm{(I-a\Delta)(S-\id-u^*)}_{L^r(\Omega)}  \lesssim_{d, r} \norm{(I-a\Delta)(S-\id)}_{L^r(\Omega)} .
\end{equation}

We shall now use this $u^*$ to construct the starting point $\tilde{Z}$. 
\begin{prop}\label{prop:reference_point}
Suppose that $Y:[0,\infty)\times\Omega \to \Omega$ is a map that satisfies 
\[
Y(t,x)=\id+\int_0^t u^*(Y(s,x))\, ds, \quad \det(DY(t,x))= 1,
\]
where $u^*$ is defined as in (\ref{eq:u_star_def}).
If we set $\tilde{Z}(x) :=Y(1,x)$ then 
\[
\norm{\tilde{Z}-\id}_{\mathcal{X}_a}\leq C_r\delta + \delta ^2K_a\Big(1+C_r\big(1+m_0( \delta C_rK_a)\big)\Big)
\]
and
\begin{equation}
F(\tilde{Z})\leq C_r\delta ^2K_a\Big(1+C_r\big(1+m_0( \delta C_rK_a)\big)\Big).
\end{equation}
Here $K_a$ and $C_r$ are defined as in Proposition \ref{prop:simplified_slope_bound}, $\delta:=\norm{((I-a\Delta)(S-\id)}_{L^r(\Omega)}$ and 
\[
m_0(t):=t \Big(1+  C_r\big(2 e^t +2t e^{2t}+ t^2 e^{3t} \big) \Big).
\]
Note that $K_a$ has an upper bound by \eqref{K_a_bd}. 
\end{prop}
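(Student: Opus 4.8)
\textbf{Proof proposal for Proposition \ref{prop:reference_point}.}

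The plan is to control the flow map $Y(t,\cdot)$ generated by the velocity field $u^*$ in the $\mathcal{X}_a$ norm, and then to estimate the defect $F(\tilde Z) = \norm{\PP_{\tilde Z}(I-a\Delta)(\tilde Z - S)}_{L^r(\Omega)}$ by comparing $\tilde Z$ against the ``linearized'' map $\id + u^*$. First I would set up Gr\"onwall-type bounds for the flow. Writing $\tilde Z - \id = \int_0^1 u^*(Y(s,\cdot))\, ds$ and, for second derivatives, differentiating the ODE twice in $x$, one obtains $D^2 Y(t) = \int_0^t \big( D^2 u^*(Y(s)) : DY(s)\otimes DY(s) + Du^*(Y(s)) \cdot D^2 Y(s)\big)\, ds$. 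Since $Y(s,\cdot)$ is measure preserving, the $L^r$ norms of $u^*(Y(s))$ and $(D^2 u^*)(Y(s))$ equal those of $u^*$ and $D^2 u^*$, which are controlled by \eqref{eq:linearized_projectionII}; the $L^\infty$ norm of $DY(s)$ is controlled via $K_a$ applied to $u^*$ and a further Gr\"onwall argument, and $\norm{Du^*}_{L^\infty}\lesssim K_a \norm{u^*}_{\mathcal{X}_a}\lesssim K_a C_r \delta$. Chaining these estimates produces the exponential factors $e^t, e^{2t}, e^{3t}$ evaluated at $t \le \delta C_r K_a$, which is precisely the origin of the function $m_0$; the factor $\delta^2 K_a$ reflects one power of $\delta$ from the size of $u^*$ and a second power of $\delta$ together with one power of $K_a$ from the quadratic-in-derivatives correction terms in the flow.

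Next I would estimate $F(\tilde Z)$. Because $\PP_{\tilde Z}$ has operator norm $\le \norm{\PP}_{r,r}\le C_r$ on $L^r$ (using \eqref{eq:april26.2020.1} and that $\tilde Z$ is measure preserving, so composition is an $L^r$ isometry), it suffices to bound $\norm{(I-a\Delta)(\tilde Z - S)}_{L^r(\Omega)}$ after projecting, but more efficiently one writes $(I-a\Delta)(\tilde Z - S) = (I-a\Delta)(\tilde Z - \id - u^*) + (I-a\Delta)(\id + u^* - S)$. The second term, after applying $\PP_{\tilde Z}$, is handled by noting $\PP_{\tilde Z}\big[(I-a\Delta)(u^* + \id - S)\big]$ is, up to the commutator between $\PP_{\tilde Z}$ and the base-point-$\id$ operator $(I-a\Delta)$, equal to $\PP_{\tilde Z}$ applied to $-a\mathcal A u^* - u^*$ translated via the defining equation \eqref{eq:u_star_def} for $u^*$: indeed $\mathbb P(I-\Delta)(\id - S) = -(I+a\mathcal A)u^*$, so $\PP\big[(I-a\Delta)(\id + u^* - S)\big] = 0$, and the error is exactly the change-of-base-point discrepancy $\norm{\PP_{\tilde Z} - \PP}_{r,r}$ (controlled by Corollary \ref{cor:lipschitz_estimate} in terms of $\norm{D\tilde Z - I}_{L^\infty}$, hence by $K_a\norm{\tilde Z - \id}_{\mathcal{X}_a}$) together with the commutator $\norm{\Delta(f\circ \tilde Z) - (\Delta f)\circ \tilde Z}_{L^r}$ estimated by Lemma \ref{lem:map_d_compose}. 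The first term, $(I-a\Delta)(\tilde Z - \id - u^*)$, measures how far the true flow deviates from its linearization; since $\tilde Z - \id - u^* = \int_0^1 \big(u^*(Y(s)) - u^*(\id)\big)\, ds$, one bounds it by $\int_0^1 \norm{Du^*}_{L^\infty}\norm{Y(s) - \id}\, ds$ for the zeroth-order part and uses the second-derivative flow bound already obtained for the $a D^2$ part, again picking up the quadratic $\delta^2 K_a$ scaling and the $m_0$ factor. Collecting all pieces and absorbing universal constants into $C_r$ yields the two claimed inequalities.

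The main obstacle I anticipate is bookkeeping the Gr\"onwall constants so that the final bounds come out in exactly the stated closed form with the single composite quantity $m_0(\delta C_r K_a)$ — in particular, tracking that every appearance of $\norm{DY}_{L^\infty}$ or $\norm{D\tilde Z - I}_{L^\infty}$ gets converted to $\delta C_r K_a$ via $K_a$ at the right place, and that the nested integrations in the $D^2 Y$ equation generate precisely the polynomial-times-exponential profile $2e^t + 2te^{2t} + t^2 e^{3t}$. A secondary technical point is justifying the identities $\PP\big[(I-a\Delta)(\id + u^* - S)\big] = 0$ and the base-point commutator expansions at the $W^{2,r}$ regularity level, which requires that $\tilde Z \in W^{2,r}_{\id}(\Omega)$ — this in turn needs $u^* \in W^{2,r}$, i.e. $S \in W^{2,r}(\Omega)$, so that the flow $Y(t,\cdot)$ stays in $W^{2,r}$ and remains a measure-preserving diffeomorphism (which requires $\norm{Du^*}_{L^\infty}$, hence $\delta C_r K_a$, to be small enough, an assumption available from the hypotheses of Theorem \ref{thm:main_2}).
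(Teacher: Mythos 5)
Your proposal follows the paper's argument essentially step for step: the same Gr\"onwall control of the flow $Y(t,\cdot)$ in the $\mathcal{X}_a$ norm (giving the $e^t, e^{2t}, e^{3t}$ factors that define $m_0$), the same algebraic decomposition $(I-a\Delta)(\tilde Z-S)=(I-a\Delta)(\tilde Z-\id-u^*)+(I-a\Delta)(\id+u^*-S)$, the observation that $\PP(I-a\Delta)(\id+u^*-S)=0$ by the defining equation \eqref{eq:u_star_def} for $u^*$, and the bound on $\norm{\PP_{\tilde Z}-\PP}_{r,r}$ via Corollary~\ref{cor:lipschitz_estimate}. One small wrinkle: when you bound $\PP_{\tilde Z}\big[(I-a\Delta)(\id+u^*-S)\big]$, you say the error is the change-of-base-point discrepancy \emph{together with} a commutator $\Delta(f\circ\tilde Z)-(\Delta f)\circ\tilde Z$ handled by Lemma~\ref{lem:map_d_compose}; that commutator never appears here, because nothing in $(I-a\Delta)(\id+u^*-S)$ is being composed with $\tilde Z$ --- the only error is $(\PP_{\tilde Z}-\PP)$ applied to a quantity of size $\lesssim C_r\delta$. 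The $\Delta(\cdot\circ Y)$ commutator does appear, but only in the first piece $(I-a\Delta)(\tilde Z-\id-u^*)$, which you treat correctly via the second-derivative flow bound; the stray reference to it in the second piece is a leftover from Proposition~\ref{prop:specific_slope_bound}, where the descent direction $u\circ Z_{\lambda}$ genuinely produces such a term. This is an over-complication rather than a gap, so the proof you sketch would still go through.
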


\begin{proof}
We begin by noticing that $u^*$ must solve the equation 
\begin{equation}\label{eq:sep11.2020.1}
\PP(I-a\Delta)(S-\id-u^*)=0.
\end{equation}
By triangle inequality,
\[
\norm{\tilde{Z}-\id-u^*}_{\mathcal{X}_a}\leq \int_0^1 \norm{u^*\circ Y(s,\cdot)-u^*}_{\mathcal{X}_a}\, ds.
\]
By Lemma~\ref{lem:aug20.4} and Corollary~\ref{cor:lipschitz_estimate},
\[
F(\tilde{Z})\leq \norm{\PP(I-a\Delta)(\tilde{Z}-S)}_{L^r(\Omega)}+\norm{D\tilde{Z}-I}_{L^{\infty}(\Omega)}\norm{\tilde{Z}-S}_{\mathcal{X}_a}.
\]
We can then estimate
\[
\norm{\tilde{Z}-S}_{\mathcal{X}_a}\leq \norm{S-\id-u^*}_{\mathcal{X}_a}+\norm{\tilde{Z}-\id-u^*}_{\mathcal{X}_a}\leq
C_r\delta+\int_0^1 \norm{u^*\circ Y(s,\cdot)-u^*}_{\mathcal{X}_a}\, ds.
\]
By \eqref{eq:sep11.2020.1} we have 
\[
\PP(I-a\Delta)(\tilde{Z}-S)=\PP(I-a\Delta)(\tilde{Z}-\id-u^*),
\]
which gives us
\[
\norm{\PP(I-a\Delta)(\tilde{Z}-S)}_{L^r(\Omega)}\leq C_r\int_0^1 \norm{u^*\circ Y(s,\cdot)-u^*}_{\mathcal{X}_a}\, ds. 
\]

Focusing on the term $\int_0^1 \norm{u^*\circ Y(s,\cdot)-u^*}_{\mathcal{X}_a}\, ds $, we have the bound 
$$
\begin{array}{lll}
\int_0^1 \norm{u^*\circ Y(s,\cdot)-u^*}_{\mathcal{X}_a}\, ds&\leq& \int_0^1 \norm{Du^*}_{L^{\infty}(\Omega)}\norm{Y(s,\cdot)-\id}_{L^r(\Omega)}+a\norm{\Delta(u^*\circ Y(s,\cdot)-u^*)}_{L^r(\Omega)}\, ds\\ \\
&\leq& \frac{1}{2}\norm{Du^*}_{L^{\infty}(\Omega)}\norm{u^*}_{L^r(\Omega)}+\int_0^1 a\norm{\Delta(u^*\circ Y(s,\cdot)-u^*)}_{L^r(\Omega)}\, ds.
\end{array}
$$
A direct calculation gives the estimate
\[
\int_0^1 a\norm{\Delta(u^*\circ Y(s,\cdot)-u^*)}_{L^r(\Omega)}\, ds\leq
\]
\[
 \int_0^1 a\norm{\Delta Y(s,\cdot)}_{L^r(\Omega)}\norm{D u^*}_{L^{\infty}(\Omega)}+a\norm{D^2 u^*}_{L^r(\Omega)}\norm{DY(s,\cdot)-I}_{L^{\infty}(\Omega)}(1+\norm{DY(s,\cdot)-I}_{L^{\infty}(\Omega)})\, ds.
\]

It is straightforward to obtain the estimates
\[
\norm{DY(t,\cdot)-I}_{L^{\infty}(\Omega)}\leq \int_0^t \norm{Du^*}_{L^{\infty}(\Omega)}(1+\norm{DY(t,\cdot)-I}_{L^{\infty}(\Omega)})\, ds.
\]
and 
\[
\norm{\Delta Y(t,\cdot)}_{L^{r}(\Omega)}\leq \int_0^t \norm{Du^*}_{L^{\infty}(\Omega)}\norm{\Delta Y(s,\cdot)}_{L^r(\Omega)}+\norm{D^2 u^*}_{L^{r}(\Omega)}\norm{DY(s,\cdot)DY^T(s,\cdot)}_{L^{\infty}(\Omega)}.
\]
Hence, Gronwall's inequality gives us 
\[
\norm{DY(t,\cdot)-I}_{L^{\infty}(\Omega)}\leq t\norm{Du^*}_{L^{\infty}(\Omega)}\exp(t\norm{Du^*}_{L^{\infty}(\Omega)}),
\]
and 
\[
\norm{\Delta Y(t,\cdot)}_{L^{r}(\Omega)}\leq 
\norm{D^2 u^*}_{L^{r}(\Omega)}\sum_{j=0}^2 \big(t\norm{Du^*}_{L^{\infty}(\Omega)}\big)^j\exp((j+1)t\norm{Du^*}_{L^{\infty}(\Omega)}).
\]

By Lemma \ref{stokes} and the definition of $K_a$, we have 
\[
\norm{u^*}_{\mathcal{X}_a}\leq C_r\delta, \quad \norm{Du^*}_{L^{\infty}(\Omega)}\leq C_rK_a \delta.
\]
Putting together our computations we get
\begin{equation}\label{eq:u_lag_diff}
\int_0^1 \norm{u^*\circ Y(s,\cdot)-u^*}_{\mathcal{X}_a}\, ds\leq C_r\delta ^2K_a\Big(1+2C_re^{C_r\delta K_a}+2C_r\delta K_ae^{2C_r\delta K_a}+C_r(C_r\delta K_a)^2e^{3C_r\delta K_a}\Big),
\end{equation}
and our estimates of $F(\tilde{Z})$ and $\norm{\tilde{Z}-\id}_{\mathcal{X}_a}$ now follow. 
\end{proof}

Now we are ready to prove the existence of a point $Z\in \mathcal D{\rm iff}_{\id}(\Omega)$ such that the critical point equation (\ref{eq:ppcrit}) is satisfied.  We shall proceed by combining Propositions \ref{prop:simplified_slope_bound} and \ref{prop:reference_point}, and estimating the remaining quantities in terms of $a$ and $\delta$.  
\begin{thm}\label{critical_pt} 
If $\delta$ and  $a^{-\frac{d+r}{2r}}\delta$  are sufficiently small (depending on $d$ and $r$ only), then there exists a constant  $\lambda>0$  such that
\[
F(Z_{\lambda})=0 \quad \textrm{and} \quad \lambda \lesssim_{d,r} \delta^2 a^{-\frac{d+r}{2r}}.
\]
\end{thm}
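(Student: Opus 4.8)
The plan is to run Ekeland's variational principle from the reference point $\tilde{Z}$ produced in Proposition~\ref{prop:reference_point}, and to show that for a well-chosen $\lambda$ the slope inequality \eqref{eq:specific_slope_ineq} of Proposition~\ref{prop:simplified_slope_bound} cannot hold; by that Proposition this forces $F(Z_\lambda)=0$. If $F(\tilde{Z})=0$ the conclusion is immediate (then $\tilde{Z}$ already solves \eqref{eq:ppcrit}), so assume $F(\tilde{Z})>0$ and set $\lambda:=2C_rF(\tilde{Z})$. Using Proposition~\ref{prop:reference_point}, Lemma~\ref{lem:B_and_K_estimates}, and the fact that $m_0(\delta C_rK_a)$ stays bounded once $a^{-\frac{d+r}{2r}}\delta$ is small, we get $F(\tilde{Z})\lesssim_{d,r}\delta^2 K_a\lesssim_{d,r}\delta^2 a^{-\frac{d+r}{2r}}$ and $\norm{\tilde{Z}-\id}_{\mathcal{X}_a}\lesssim_{d,r}\delta$; in particular $\lambda\lesssim_{d,r}\delta^2 a^{-\frac{d+r}{2r}}$, which is the quantitative bound on $\lambda$ asserted in the statement.

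The next step is to control $Z_\lambda$. Ekeland's principle delivers $Z_\lambda\in\mathcal{M}$ (so $\det(DZ_\lambda)=1$) together with $\norm{Z_\lambda-\tilde{Z}}_{\mathcal{X}_a}\le\lambda$; hence $\norm{Z_\lambda-\id}_{\mathcal{X}_a}\le\lambda+\norm{\tilde{Z}-\id}_{\mathcal{X}_a}\lesssim_{d,r}\delta$, a bound that is uniform over all admissible $\lambda$, so no circularity arises from the choice $\lambda=2C_rF(\tilde{Z})$. From the definition of $K_a$ this gives $\norm{DZ_\lambda-I}_{L^\infty(\Omega)}\le K_a\norm{Z_\lambda-\id}_{\mathcal{X}_a}\lesssim_{d,r}K_a\delta$; writing $DZ_\lambda DZ_\lambda^T-I$ and $\partial_iZ_\lambda\otimes\partial_jZ_\lambda-e_i\otimes e_j$ as products involving $DZ_\lambda-I$, and bounding $a\norm{\Delta Z_\lambda}_{L^r(\Omega)}$ and $a\norm{D^2Z_\lambda}_{L^r(\Omega)}$ by $\norm{Z_\lambda-\id}_{\mathcal{X}_a}$, every geometric quantity appearing in \eqref{eq:specific_slope_ineq} is $\lesssim_{d,r}K_a\delta$. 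For the last term I would decompose $Z_\lambda-S=(Z_\lambda-\tilde{Z})+(\tilde{Z}-\id-u^*)-(S-\id-u^*)$ and use \eqref{eq:sep11.2020.1}, \eqref{eq:linearized_projectionII2}, and the estimate \eqref{eq:u_lag_diff} to get $\norm{(I-a\Delta)(Z_\lambda-S)}_{L^r(\Omega)}\lesssim_{d,r}\lambda+\delta\lesssim_{d,r}\delta$, whence $2K_aC_r^3\norm{(I-a\Delta)(Z_\lambda-S)}_{L^r(\Omega)}\lesssim_{d,r}K_a\delta$ as well.

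Feeding these bounds into \eqref{eq:specific_slope_ineq}: its left-hand side equals $-1$ plus a nonnegative quantity $\lesssim_{d,r}K_a\delta$, hence is $\le-\tfrac34$ once $a^{-\frac{d+r}{2r}}\delta$ is small enough; its right-hand side equals $-C_r\tfrac{F(\tilde{Z})}{\lambda}$ times a factor $1+\e$ with $0\le\e\lesssim_{d,r}K_a\delta$, which, since $\lambda=2C_rF(\tilde{Z})$, equals $-\tfrac12(1+\e)\ge-\tfrac58$ for $a^{-\frac{d+r}{2r}}\delta$ small. Thus \eqref{eq:specific_slope_ineq} would assert $-\tfrac34\ge-\tfrac58$, a contradiction, so the alternative $F(Z_\lambda)\neq0$ of Proposition~\ref{prop:simplified_slope_bound} cannot occur: $F(Z_\lambda)=0$, and $\lambda\lesssim_{d,r}\delta^2 a^{-\frac{d+r}{2r}}$ was already established.

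The argument is conceptually straightforward and the main obstacle is the bookkeeping: one must check that each error term in \eqref{eq:specific_slope_ineq} --- the commutator-type quantities $\norm{DZ_\lambda DZ_\lambda^T-I}_{L^\infty(\Omega)}$ and $aK_a\norm{\Delta Z_\lambda}_{L^r(\Omega)}$, the sum $\sum_{i,j}\norm{\partial_iZ_\lambda\otimes\partial_jZ_\lambda-e_i\otimes e_j}_{L^\infty(\Omega)}$, and $K_a\norm{(I-a\Delta)(Z_\lambda-S)}_{L^r(\Omega)}$ --- is genuinely absorbed by the smallness of $\delta$ and $a^{-\frac{d+r}{2r}}\delta$, and that all these estimates are uniform over the admissible range of $\lambda$. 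The delicate point is the order of the logic: $\tilde{Z}$ is a fixed map, so $F(\tilde{Z})$ and therefore $\lambda=2C_rF(\tilde{Z})$ are fixed before $Z_\lambda$ is produced, and every resulting $Z_\lambda$ lies in the fixed small ball $\{\,Z:\norm{Z-\id}_{\mathcal{X}_a}\lesssim_{d,r}\delta\,\}$; once this is observed, the contradiction closes cleanly.
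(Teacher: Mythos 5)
Your argument is correct and follows essentially the same route as the paper: start EVP from the reference point $\tilde Z$ of Proposition~\ref{prop:reference_point}, choose $\lambda$ proportional to $F(\tilde Z)$, bound each term in the slope inequality \eqref{eq:specific_slope_ineq} using Lemma~\ref{lem:B_and_K_estimates} and the estimates on $\tilde Z$, and derive a contradiction with $F(Z_\lambda)\neq 0$ once $\delta$ and $a^{-\frac{d+r}{2r}}\delta$ are small. (If anything your choice $\lambda=2C_rF(\tilde Z)$ handles the $C_r$-prefactor on the right-hand side of \eqref{eq:specific_slope_ineq} a bit more transparently than the paper's $\lambda=\gamma_0 F(\tilde Z)$ with $\gamma_0=2$, where the $C_r$ is absorbed informally into the ``dropping constants'' step.)
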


\begin{proof}
For each $\lambda>0$, let $Z_{\lambda}$ be the point provided by Ekeland's variational principle starting from the point $\tilde{Z}$ constructed in Proposition \ref{prop:reference_point}.
If  $F(Z_{\lambda})\neq 0$, then Proposition \ref{prop:simplified_slope_bound} provides us with the ``slope inequality"
\[
-1+C_r^2\Big(\norm{DZ_{\lambda}DZ_{\lambda}^T-I}_{L^{\infty}(\Omega)}+aK_a\norm{\Delta Z_{\lambda}}_{L^r(\Omega)}\Big)+2K_aC_r^3\norm{(I-a\Delta)(Z_{\lambda}-S)}_{L^r(\Omega)}\geq 
\]
\[
-C_r\frac{F(\tilde{Z})}{\lambda}\Big(1+\sum_{i,j=1}^d\norm{\partial_i Z_{\lambda}\otimes \partial_j Z_{\lambda}-e_i\otimes e_j}_{L^{\infty}(\Omega)}+aK_a\norm{D^2 Z_{\lambda}}_{L^r(\Omega)}\Big)\]
Our goal is to rewrite this inequality in terms of $\delta$ and $a$ to derive a contradiction when $\delta$ and $a^{-\frac{d+r}{2r}}\delta$ are small enough.

\medskip

 Let us choose $\lambda:=\gamma_0F(\tilde{Z})$ for some constant $\gamma_0>0$, and set
   \[b_1:=\norm{Z_{\lambda}-\id}_{\mathcal{X}_a}.\]  
  Recall that  $\lambda$ is positive since $0<F(Z_{\lambda})\leq F(\tilde{Z})$.   We can then write
\[
\norm{(I-a\Delta)(Z_{\lambda}-S)}_{L^r(\Omega)}\leq \delta+\norm{Z_{\lambda}-\id}_{\mathcal{X}_a} = \delta + b_1,
\]
\[
\norm{DZ_{\lambda}DZ_{\lambda}^T-I}_{L^{\infty}(\Omega)}\leq K_a \norm{Z_{\lambda}-\id}_{\mathcal{X}_a}\big(2+K_a\norm{Z_{\lambda}-\id}_{\mathcal{X}_a}\big)=K_ab_1(2+K_ab_1),
\]
and 
\[
\sum_{i,j=1}^d\norm{\partial_i Z_{\lambda}\otimes \partial_j Z_{\lambda}-e_i\otimes e_j}_{L^{\infty}(\Omega)}\leq d^2K_a \norm{Z_{\lambda}-\id}_{\mathcal{X}_a}\big(2+K_a\norm{Z_{\lambda}-\id}_{\mathcal{X}_a}\big)=d^2 K_ab_1(2+K_ab_1).
\]
We also note that 
\[
a\norm{ D^2 Z_{\lambda}}_{L^r(\Omega)}=aK_a\norm{D^2 (Z_{\lambda}-\id)}_{L^r(\Omega)}\leq K_ab_1,
\]
and hence
\[
a\norm{ \Delta Z_{\lambda}}_{L^r(\Omega)}\leq K_a b_1
\]
Using these bounds, the  slope inequality can now be written as 
\[
-1+C_r^2\Big(K_ab_1(2+K_ab_1)+K_ab_1\Big)+2K_aC_r^3(\delta + b_1)\geq 
-\frac{C_r}{\gamma_0}\Big(1+d^2 K_ab_1(2+K_ab_1)+K_ab_1\Big)
\]
Dropping constants and rearranging, we have shown that
\begin{equation}\label{inequality_0}
1-\gamma_0^{-1}\lesssim K_a(\delta + b_1)+K_ab_1(2+K_ab_1)+K_ab_1
+\gamma_0^{-1}\Big( K_ab_1(2+K_ab_1)+K_ab_1\Big).
\end{equation}

Let us now define 
\begin{equation*}
b_2:=K_ab_1.
\end{equation*} 
Using the above calculations, we can rewrite \eqref{inequality_0}  as
\begin{equation}\label{eq:simplified_epsilon_slope_ineq}
1-\gamma_0^{-1} \lesssim_r  K_a\delta +b_2^2 + b_2 +\gamma_0^{-1}(b_2+b_2^2) 
\end{equation}

Since $\norm{Z_{\lambda}-\tilde{Z}}_{\mathcal{X}_a} \leq \lambda$, as long as $a^{-\frac{d+r}{2r}}\delta$ are sufficiently small,  Proposition \ref{prop:reference_point} yields
\[
b_1=\norm{Z_{\lambda}-\id}_{\mathcal{X}_a}\leq \lambda+\norm{\tilde{Z}-\id}_{\mathcal{X}_a} \lesssim_r  \lambda+ \delta + K_a\delta^2.
\]

Moreover, Proposition~\ref{prop:reference_point} and Lemma \ref{lem:B_and_K_estimates} yields that 
$$
K_a \lesssim_{d,r} a^{-\frac{d+r}{2r}} \quad \hbox{ and }\quad \lambda= \gamma_0 F(\tilde{Z})\lesssim_{d,r}  \gamma_0 K_a \delta^2 \lesssim_{d,r} \gamma_0a^{-\frac{d+r}{2r}}\delta^2.
$$

 Hence we see that  $K_ab_1\lesssim_{d,r} a^{-\frac{d+r}{2r}}\delta+(a^{-\frac{d+r}{2r}}\delta)^2$.

  Thus, when $a^{-\frac{d+r}{2r}}\delta$ is sufficiently small, \eqref{eq:simplified_epsilon_slope_ineq} fails for $\gamma_0 = 2$. Hence, we can deduce by contradiction that $F(Z_{\lambda})=0$ when $\lambda=\gamma_0 F(\tilde{Z})$.  The conclusion of the theorem follows from the bounds on $F(\tilde{Z})$ and $K_a$.
  
  \end{proof}

\textbf{Proof of Theorem~\ref{thm:main_2}}. 

When $\delta$, $a^{-\frac{d+r}{2r}}\delta$ and $a^{-\frac{d+3r}{2r}}\delta^2$ are sufficiently small, Theorem~\ref{critical_pt} yields a point $Z_{\lambda}$ such that
\begin{equation}\label{eq:100}
F(Z_\lambda)=0 \hbox{ and } \norm{Z_{\lambda}-\tilde{Z}}_{\mathcal{X}_a} \leq \lambda =\gamma\delta^2 a^{-\frac{d+r}{2r}},
\end{equation}
 where $\tilde{Z}$ is as given in Proposition \ref{prop:reference_point}.   Since $F(Z_{\lambda})=0$ and $Z_{\lambda}\in W^{2,r}(\Omega)$ we know that
\[
(I-a\Delta)(Z_{\lambda}-S)+D^T(\cof(DZ)q^*)=0,
\]
for some function $q^*\in W^{1,r}(\Omega)$.   Theorem~\ref{thm:main_2} will now follow from Theorem~\ref{thm:unique} if we can show that $\norm{q^*}_{L^{\infty}(\Omega)}\leq a\sigma^2(2+3(1+\sqrt{3}))^{-1}$, where $\sigma$ is the smallest singular value of $DZ_{\lambda}$.

Note that 
\[
\norm{DZ_{\lambda}-I}_{L^{\infty}(\Omega)}\leq K_a(\lambda+\norm{\tilde{Z}-\id}_{\mathcal{X}_a})\lesssim_{d,r}  a^{-\frac{d+r}{2r}}\delta,
\]
Thus, 
\begin{equation}\label{sigma}
\sigma\geq 1-\theta_{d,r} a^{-\frac{d+r}{2r}}\delta
\end{equation}
for some constant $\theta_{d,r}>0$. 

Define $p^*:=q^*\circ Z_{\lambda}^{-1}$ and note that $\norm{q^*}_{L^{\infty}(\Omega)}=\norm{p^*}_{L^{\infty}(\Omega)}$.  We have $D^T(\cof(DZ)\nabla q)=\nabla p^*(Z_{\lambda})$.
  Since $Z_{\lambda}$ is measure preserving, we have
\[
\norm{\nabla p^*}_{L^r(\Omega)}=\norm{(I-a\Delta)(Z_{\lambda}-S)}_{L^r(\Omega)}\leq \lambda+\norm{(I-a\Delta)(\tilde{Z}-S)}_{L^r(\Omega)}.
\]
Recalling the definition of $u^*$ from (\ref{eq:u_star_def})
and 
\[
\delta':=\norm{S-\id-u^*}_{\mathcal{X}_a}
\]
we have 
\[
\norm{(I-a\Delta)(\tilde{Z}-S)}_{L^r(\Omega)}\leq \delta'+\norm{(I-a\Delta)(\tilde{Z}-\id-u^*)}_{L^r(\Omega)}.
\]
Since $\norm{(I-a\Delta)(\tilde{Z}-\id-u^*)}_{L^r(\Omega)}$ is bounded by the quantity on the left hand side of (\ref{eq:u_lag_diff}), we can conclude that
\[
\norm{(I-a\Delta)(\tilde{Z}-S)}_{L^r(\Omega)}\lesssim \delta'+a^{-\frac{d+r}{2r}}\delta^2,
\]
when $\delta$ and $a^{-\frac{d+r}{2r}}\delta$ are sufficiently small. In particular from \eqref{eq:100} it follows that 
$$
\norm{(I-a\Delta)(Z_{\lambda}-S)}_{L^r(\Omega)} \leq \delta'+a^{-\frac{d+r}{2r}}\delta^2.
$$

It is now clear from \eqref{sigma} and  the Poincar\'{e}-Wirtinger inequality  that 
\[
 \Big\| p^* - {1\over |\Omega|} \int_\Omega p^*(x) dx \Big\|_{L^{\infty}(\Omega)} \leq a\sigma^2 \big(2+3(1+\sqrt{3})\big)^{-1}
 \]
will hold as long as $a^{-1}(\delta'+a^{-\frac{d+r}{2r}}\delta^2)$ is sufficiently small. 
 
 \hfill$\Box$

\section{Application to Navier-Stokes equation}\label{section:6}

In this section we prove Theorem~\ref{thm:NSE}. For a given initial data $v_0\in L^r(\Omega; \Rd)$ with $r>d$, we will construct discrete-time solutions that generates the unique mild solution of the Navier-Stokes equations \eqref{eq:NSE}  as well as the associated Lagrangian flow \eqref{eq:lagrangian_ns}.

\subsection{The discrete scheme: Lagrangian and Eulerian viewpoint} Let $\mathcal{A}$ denote the Stokes operator introduced in \eqref{stokes_op}. We begin by recalling the discrete-in-time scheme to construct Lagrangian and Eulerian solutions to the Navier-Stokes equations.  Given an initial velocity $v_0$, we set $v_{0,\tau}=v_0$ and iterate the following steps:
\begin{equation}\label{eq:ns_scheme_step_1}
(I-\mt\Delta)S_{n+1,\tau}=\id+\tau v_{n,\tau}, \quad S_{n+1,\tau}|_{\partial\Omega}=\id,
\end{equation}
  \begin{equation}\label{eq:ns_scheme_step_1.5}
Z_{n+1,\tau}\in \argmin_{Z\in \mathcal{D}\textrm{iff}_{\id}(\Omega)} \frac{1}{2}\norm{Z-S_{n+1,\tau}}_{L^2(\Omega)}^2+\frac{\mt}{2}\norm{DZ-DS_{n+1}}_{L^2(\Omega)}^2,
\end{equation}
\begin{equation}\label{eq:ns_scheme_step_2}
w_{n+1,\tau}:=  Z_{n+1\, \#}v_{n,\tau},
\end{equation}
\begin{equation}\label{eq:ns_scheme_step_3}
v_{n+1,\tau}:= e^{-\mt \mathcal{A}}w_{n+1}.
\end{equation}
Note that $v_{n,\tau}$ has zero trace on $\partial\Omega$ from the definition.

\medskip

Due to Theorem~\ref{thm:main_2}, $Z_{n+1}$ exists as long as $\norm{v_n}_{L^r(\Omega)}$ is bounded (see Lemma~\ref{from_sec_5}), which makes the scheme well-defined.
We shall also use the scheme to construct discrete-in-time Lagrangian solutions $X_{n,\tau}$ by setting $X_{0,\tau}=\id$ and iterating
\begin{equation}\label{eq:nslagrangian}
X_{n+1,\tau}=Z_{n+1,\tau}\circ X_{n,\tau}.
\end{equation}

  Note that the steps (\ref{eq:ns_scheme_step_2}) and  (\ref{eq:ns_scheme_step_3}) can be understood as a splitting scheme for Navier-Stokes.  Step  (\ref{eq:ns_scheme_step_2}) accounts for the transportation of the velocity field, while step (\ref{eq:ns_scheme_step_3}) accounts for the linear parts of the equation.  The non-standard aspect of this scheme is that we advect the vector field with the projection map $Z_{n+1,\tau}$.   This lends a great deal of stability, since it makes the scheme much more implicit.   Furthermore, $Z_{n+1,\tau}$ is measure preserving, thus, we will see that the scheme automatically satisfies a discrete version of the energy dissipation inequality and the Navier-Stokes Duhamel formula (see Lemmas \ref{lem:edi} and \ref{duhamel}). 

\medskip

Our ultimate aim is to show that the velocity iterates $v_{n+1,\tau}$ and the Lagrangian maps $X_{n+1,\tau}$ converge to Eulerian and Lagrangian solutions of Navier-Stokes respectively as $\tau$ tends to zero.  To that end we will introduce piecewise constant interpolations $v_{\tau}, Z_{\tau}, X_{\tau}, \tilde{v}_{\tau}$ defined as follows: for $U$ denoting $v, Z, X$ and $\tilde{v}$,
\begin{equation}
U_{\tau}(t,x):= U_{n+1,\tau}(x) \quad \hbox{ if }  n\tau \leq t <(n+1)\tau.
\end{equation}

Now we are ready to analyze the scheme.  Let us begin by translating the estimates from Section 5 into our current setting. The following statement is a direct consequence of Theorem~\ref{thm:main_2}. Note that here we have 
$$
a=\mt, \,\,  \delta=\tau\norm{v_{N-1,\tau}}_{L^r(\Omega)} \hbox{ and }\delta'=0.
$$ 

\begin{lemma}\label{from_sec_5}
There exists a constant $C>0$  only depending on $r$ such that the following holds. 
Suppose that (\ref{eq:ns_scheme_step_1}-\ref{eq:ns_scheme_step_3}) are well-defined for $0\leq n \leq N-1$ and suppose that $v_{N-1,\tau}$ satisfies 
\begin{equation}\label{small}
\norm{v_{N-1,\tau}}_{L^r(\Omega)} \leq C \mu^{d+3r \over 4r} \tau^{d-r \over 4r}
\end{equation}
for some constant $C>0$. If $C$ is sufficiently small then $v_{N,\tau}$ is well-defined by the scheme.

\end{lemma}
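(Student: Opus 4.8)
The plan is to read the statement off from Theorem~\ref{thm:main_2}, as the discussion preceding the lemma already indicates. Advancing the scheme from $v_{N-1,\tau}$ to $v_{N,\tau}$ amounts to carrying out (\ref{eq:ns_scheme_step_1})--(\ref{eq:ns_scheme_step_3}) at level $n=N-1$, and only the projection step (\ref{eq:ns_scheme_step_1.5}) is not routine. Indeed (\ref{eq:ns_scheme_step_1}) is the linear elliptic problem $(I-\mt\Delta)(S_{N,\tau}-\id)=\tau v_{N-1,\tau}$ with $S_{N,\tau}-\id\in W^{1,r}_0(\Omega)$, uniquely solvable in $W^{2,r}(\Omega)\cap W^{1,r}_0(\Omega)$ because $v_{N-1,\tau}\in L^r(\Omega)$, and by construction $\norm{(I-\mt\Delta)(S_{N,\tau}-\id)}_{L^r(\Omega)}=\tau\norm{v_{N-1,\tau}}_{L^r(\Omega)}$. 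Once a minimizer $Z_{N,\tau}\in W^{2,r}_{\id}(\Omega)\cap\mathcal D\textup{iff}_{\id}(\Omega)$ of (\ref{eq:ns_scheme_step_1.5}) has been produced, it is a measure-preserving diffeomorphism with $DZ_{N,\tau}\in L^\infty(\Omega)$ (since $r>d$), so the push-forward $w_{N,\tau}=Z_{N\,\#}v_{N-1,\tau}$ in (\ref{eq:ns_scheme_step_2}) is a well-defined element of $L^r(\Omega)$, again divergence free and with vanishing trace on $\partial\Omega$; and since $e^{-\mt\mathcal A}$ maps the space of divergence-free $L^r$ fields boundedly into itself, the iterate $v_{N,\tau}=e^{-\mt\mathcal A}w_{N,\tau}\in L^r(\Omega)$ is then well-defined by (\ref{eq:ns_scheme_step_3}). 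Hence it suffices to solve (\ref{eq:ns_scheme_step_1.5}), which is precisely the $H^1$-projection problem (\ref{eq:h1_proj}) with $a=\mt$ and $S=S_{N,\tau}$.

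I would then invoke Theorem~\ref{thm:main_2} with $(a,S)=(\mt,S_{N,\tau})$. As recorded before the statement, here $\delta=\norm{(I-a\Delta)(S-\id)}_{L^r(\Omega)}=\tau\norm{v_{N-1,\tau}}_{L^r(\Omega)}$ and $\delta'=0$, so the conditions involving $\delta'$ in Theorem~\ref{thm:main_2} are vacuous. Inserting the hypothesis (\ref{small}) and writing $a=\mt$, elementary exponent bookkeeping gives
\[
\delta\ \lesssim_r\ C\,(\mt)^{\frac{3r+d}{4r}},\qquad a^{-\frac{d+r}{2r}}\delta\ \lesssim_r\ C\,(\mt)^{\frac{r-d}{4r}},\qquad a^{-1}\delta\ \lesssim_r\ C\,(\mt)^{-\frac{r-d}{4r}},
\]
and hence, using the identity $a^{-\frac{d+3r}{2r}}\delta^2=\bigl(a^{-\frac{d+r}{2r}}\delta\bigr)\bigl(a^{-1}\delta\bigr)$ recorded in Theorem~\ref{thm:main_2},
\[
a^{-\frac{d+3r}{2r}}\delta^2\ \lesssim_r\ C^2 .
\]
Since $r>d$, the powers of $\mt$ in the bounds for $\delta$ and $a^{-\frac{d+r}{2r}}\delta$ are nonnegative, so in the regime where $\mt$ stays bounded (i.e.\ for $\tau$ small, which is where the lemma is used) we obtain $\delta\lesssim_r C$, $a^{-\frac{d+r}{2r}}\delta\lesssim_r C$, $a^{-1}\delta'=0$ and $a^{-\frac{d+3r}{2r}}\delta^2\lesssim_r C^2$; taking $C$ small enough, all four quantities fall below the thresholds required by Theorem~\ref{thm:main_2} (those thresholds depend only on $d\in\{2,3\}$ and $r$, hence effectively on $r$).

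With these smallness conditions in force, Theorem~\ref{thm:main_2} yields $Z_{N,\tau}\in W^{2,r}_{\id}(\Omega)$ together with $q^*\in W^{1,r}(\Omega)$ solving (\ref{eq:qcrit}) and satisfying the hypotheses of Theorem~\ref{thm:unique}; by Theorem~\ref{thm:unique}, $Z_{N,\tau}$ is then the unique minimizer in (\ref{eq:ns_scheme_step_1.5}), and combined with the first paragraph this shows $v_{N,\tau}$ is well-defined by the scheme. The point that needs care --- and the reason the weaker Theorem~\ref{thm:main_1} is not enough here --- is precisely the exponent accounting above: $a^{-1}\delta=\mu^{-1}\norm{v_{N-1,\tau}}_{L^r(\Omega)}$ is \emph{not} small (it is $\lesssim_r C(\mt)^{-\frac{r-d}{4r}}$, which diverges as $\tau\to0$), so it is essential that $\delta'=0$ exactly, leaving $a^{-\frac{d+3r}{2r}}\delta^2=(a^{-\frac{d+r}{2r}}\delta)(a^{-1}\delta)$ as the only quadratic obstruction, in which the unfavorable negative power of $\mt$ coming from $a^{-1}\delta$ is exactly cancelled by the favorable power from $a^{-\frac{d+r}{2r}}\delta$, so that the whole smallness is carried by the constant $C$ in (\ref{small}).
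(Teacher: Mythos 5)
Your proof is correct and reproduces the paper's (terse) argument: the paper simply records that Lemma~\ref{from_sec_5} is ``a direct consequence of Theorem~\ref{thm:main_2}'' with $a=\mt$, $\delta=\tau\norm{v_{N-1,\tau}}_{L^r(\Omega)}$ and $\delta'=0$, and you have filled in precisely the exponent bookkeeping that substantiates this, reducing to the same application of Theorem~\ref{thm:main_2} at $(a,S)=(\mt,S_{N,\tau})$. You also correctly identify the two points the paper leaves implicit --- that the argument hinges on $\delta'=0$ (since $a^{-1}\delta\sim C(\mt)^{(d-r)/(4r)}$ is not small as $\tau\to0$, so Theorem~\ref{thm:main_1} would not suffice) and that $a^{-(d+3r)/(2r)}\delta^2\sim C^2$ sits exactly on the borderline, so its smallness must come entirely from the constant $C$ in \eqref{small}.
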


Since $r>d$, Lemma~\ref{from_sec_5} will follow if we can show that $\norm{v_{n,\tau}}_{L^r(\Omega)}$ is uniformly bounded with respect to $\tau$. This is what we will show  in Section 6.3 for a finite time period $0\leq n \leq [\frac{T^*}{\tau}]$. To this end we first present a preliminary estimate that connects $\tilde{v}$ and $v$.

\begin{prop}\label{prop:scheme_ekeland_estimates}
Given some $\tau>0$, let $\{v_{n,\tau}, Z_{n,\tau}, \tilde{v}_{n,\tau}\}_{n\geq 0}$ be the sequence of iterates generated by (\ref{eq:ns_scheme_step_1}-\ref{eq:ns_scheme_step_3}) and \eqref{eq:nslagrangian}.
If $v_{n,\tau}$ satisfies the $L^r$ norm bound from Lemma \ref{from_sec_5}, then
\begin{equation}\label{eq:sep20.2020.0}
\norm{\tilde{v}_{n+1,\tau}}_{L^r(\Omega)}\lesssim_{d,r}\norm{v_{n,\tau}}_{L^r(\Omega)}+\mu^{-\frac{d+r}{2r}} \tau^{1-\frac{d+r}{2r}}\norm{v_{n,\tau}}_{L^r(\Omega)}^2.
\end{equation}
Furthermore, if $v_{n,\tau}\in H^1(\Omega)$ then 
\begin{equation}
\norm{\tilde{v}_{n+1,\tau}-v_{n,\tau}}_{L^2(\Omega)}\lesssim_{d,r} (\mt)^{1/2}\norm{Dv_{n,\tau}}_{L^2(\Omega)}+\mu^{-\frac{d+r}{2r}} \tau^{1-\frac{d+r}{2r}}\norm{v_{n,\tau}}_{L^r(\Omega)}^2.
\end{equation}
\end{prop}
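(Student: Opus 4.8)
The plan is to write $\tilde v_{n+1,\tau}=\tau^{-1}(Z_{n+1,\tau}-\id)$ as the sum of the velocity produced by the Helmholtz step \eqref{eq:ns_scheme_step_1} and a small remainder governed by Theorem~\ref{thm:main_2}. Since $(I-\mt\Delta)\id=\id$ and $S_{n+1,\tau}-\id$ vanishes on $\partial\Omega$, the field $\bar v_n:=\tau^{-1}(S_{n+1,\tau}-\id)$ is nothing but the Dirichlet resolvent $(I-\mt\Delta)^{-1}v_{n,\tau}$, so that
\[
\tilde v_{n+1,\tau}=\bar v_n+\tau^{-1}\big(Z_{n+1,\tau}-S_{n+1,\tau}\big),\qquad \tilde v_{n+1,\tau}-v_{n,\tau}=\big(\bar v_n-v_{n,\tau}\big)+\tau^{-1}\big(Z_{n+1,\tau}-S_{n+1,\tau}\big).
\]
I would then bound the three pieces $\bar v_n$, $\tau^{-1}(Z_{n+1,\tau}-S_{n+1,\tau})$, and $\bar v_n-v_{n,\tau}$ in turn.

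For the remainder I would invoke Theorem~\ref{thm:main_2} exactly as recorded just before Lemma~\ref{from_sec_5}, with $a=\mt$, $\delta=\tau\norm{v_{n,\tau}}_{L^r(\Omega)}$ and $\delta'=0$: the $L^r$-bound on $v_{n,\tau}$ assumed in the statement is precisely what forces all the smallness conditions of Theorem~\ref{thm:main_2}, and this gives $\norm{(I-\mt\Delta)(Z_{n+1,\tau}-S_{n+1,\tau})}_{L^r(\Omega)}\lesssim_{d,r}(\mt)^{-\frac{d+r}{2r}}\tau^2\norm{v_{n,\tau}}_{L^r(\Omega)}^2$. Because $Z_{n+1,\tau}-S_{n+1,\tau}$ lies in $W^{2,r}(\Omega)\cap W^{1,r}_0(\Omega)$ and the Dirichlet operator $(I-\mt\Delta)^{-1}$ is an $L^r$-contraction (test against $|g|^{r-2}g$ and integrate by parts), dividing by $\tau$ yields $\tau^{-1}\norm{Z_{n+1,\tau}-S_{n+1,\tau}}_{L^r(\Omega)}\lesssim_{d,r}\mu^{-\frac{d+r}{2r}}\tau^{1-\frac{d+r}{2r}}\norm{v_{n,\tau}}_{L^r(\Omega)}^2$, hence the same bound in $L^2(\Omega)$ since $\Omega$ is bounded. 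The same contraction property gives $\norm{\bar v_n}_{L^r(\Omega)}\le\norm{v_{n,\tau}}_{L^r(\Omega)}$, and substituting these into the first identity above produces the first inequality.

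For the second inequality the new ingredient is the estimate of $\bar v_n-v_{n,\tau}=\big((I-\mt\Delta)^{-1}-I\big)v_{n,\tau}$. Here I would use that $v_{n,\tau}=e^{-\mt\mathcal A}w_{n,\tau}\in H^1_0(\Omega;\Rd)$ along the scheme, expand componentwise in the Dirichlet eigenfunctions $-\Delta e_k=\lambda_k e_k$, and use $\frac{(\mt\lambda_k)^2}{(1+\mt\lambda_k)^2}\le \mt\lambda_k$ to get
\[
\norm{\bar v_n-v_{n,\tau}}_{L^2(\Omega)}^2=\sum_k\frac{(\mt\lambda_k)^2}{(1+\mt\lambda_k)^2}\,|\hat v_k|^2\;\le\;\mt\sum_k\lambda_k|\hat v_k|^2=\mt\,\norm{Dv_{n,\tau}}_{L^2(\Omega)}^2.
\]
Adding this to the $L^2$ bound on $\tau^{-1}(Z_{n+1,\tau}-S_{n+1,\tau})$ from the previous paragraph completes the argument.

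Most of this is bookkeeping, and the two places that need care are: (i) verifying that the assumed $L^r$-smallness really does imply \emph{every} smallness condition of Theorem~\ref{thm:main_2}, so that $\norm{(I-\mt\Delta)(Z_{n+1,\tau}-S_{n+1,\tau})}_{L^r}$ is controlled by $a^{-\frac{d+r}{2r}}\delta^2$ and not merely by $\delta$; and (ii) the power counting that turns $\tau^{-1}a^{-\frac{d+r}{2r}}\delta^2$ into the stated factor $\mu^{-\frac{d+r}{2r}}\tau^{1-\frac{d+r}{2r}}\norm{v_{n,\tau}}_{L^r}^2$. The one genuinely structural step — and the origin of the viscous scale $(\mt)^{1/2}$ — is the spectral bound for $(I-\mt\Delta)^{-1}-I$ in the third paragraph, which is where the extra regularity $v_{n,\tau}\in H^1$ enters.
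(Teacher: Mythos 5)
Your decomposition is genuinely different from the paper's. Where the paper splits $\tilde v_{n+1,\tau}$ at the flow reference point $\tilde Z_{n+1,\tau}$ built in Proposition~\ref{prop:reference_point} (controlling $Z_{n+1,\tau}-\tilde Z_{n+1,\tau}$ by the $\lambda$-bound in Theorem~\ref{critical_pt}, which depends only on $\delta$), you split directly at $S_{n+1,\tau}$ and invoke the final estimate of Theorem~\ref{thm:main_2} together with an $L^r$-contraction for the resolvent; and where the paper obtains the $(\mt)^{1/2}$ factor from the Stokes-resolvent energy identity $\|u^*_{n+1,\tau}-v_{n,\tau}\|^2_{L^2}\le\frac{\mt}{2}\|Dv_{n,\tau}\|_{L^2}^2$, you obtain it from a spectral bound $\big(\mt\lambda_k/(1+\mt\lambda_k)\big)^2\le\mt\lambda_k$. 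That is an attractive simplification, and the basic architecture (linear resolvent part $+$ small EVP remainder) is the same.

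However, there is a concrete inconsistency you should resolve. You simultaneously assert (i) that $\bar v_n=\tau^{-1}(S_{n+1,\tau}-\id)$ is the \emph{componentwise Dirichlet} resolvent $(I-\mt\Delta)^{-1}v_{n,\tau}$, and (ii) that $\delta'=0$. These cannot both hold: $\delta'=0$ requires $S_{n+1,\tau}-\id$ to equal the divergence-free minimizer $u^*$ in \eqref{eq:intro_lin_proj}, but the componentwise Dirichlet resolvent of a divergence-free field vanishing on $\partial\Omega$ is generally \emph{not} divergence-free (its divergence $g$ solves $(I-\mt\Delta)g=0$ with no control of $g$ on $\partial\Omega$, since $g|_{\partial\Omega}$ is the normal derivative of the normal component of $S_{n+1,\tau}-\id$). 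If you keep the Dirichlet reading, then $\delta'\simeq\delta$, the term $\tau^{-1}\delta'\simeq\|v_{n,\tau}\|_{L^r}$ appears in your bound for $\tau^{-1}\|Z_{n+1,\tau}-S_{n+1,\tau}\|_{L^2}$, and the second inequality of the Proposition fails. If instead $\delta'=0$ (as the paper uses), then $\bar v_n=u^*_{n+1,\tau}$ is the \emph{Stokes} resolvent, $\bar v_n\ne(I-\mt\Delta)^{-1}v_{n,\tau}$, and your spectral expansion must be done in eigenfunctions of the Stokes operator $\mathcal{A}$, not of the scalar Dirichlet Laplacian; the calculation is otherwise identical because $(\mathcal{A}v_{n,\tau},v_{n,\tau})=\|Dv_{n,\tau}\|_{L^2}^2$. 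Note the paper's own proof sidesteps the whole question: by decomposing at $\tilde Z_{n+1,\tau}$ and invoking Theorem~\ref{critical_pt} (which bounds $\lambda$ purely in terms of $\delta$), it never needs to touch the $\delta'$-dependent part of Theorem~\ref{thm:main_2} in this Proposition. You should either adopt the Stokes-resolvent reading explicitly (and change the eigenfunction basis accordingly), or revert to the $\tilde Z$-decomposition for the $L^2$ estimate.
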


\begin{proof}
Let $\tilde{Z}_{n+1,\tau}$ be the reference point from Proposition \ref{prop:reference_point} constructed from the map $S_{n+1,\tau}$.   By the triangle inequality, we have 
\[
\norm{\tilde{v}_{n+1,\tau}}_{L^r(\Omega)}=\norm{\frac{Z_{n+1}-\id}{\tau}}_{L^r(\Omega)}\leq \norm{\frac{Z_{n+1}-\tilde{Z}_{n+1,\tau}}{\tau}}_{L^r(\Omega)}+\norm{\frac{\tilde{Z}_{n+1}-\id}{\tau}}_{L^r(\Omega)}.
\]
Clearly, 
\[
\norm{\frac{Z_{n+1}-\tilde{Z}_{n+1,\tau}}{\tau}}_{L^r(\Omega)}\leq \norm{\frac{Z_{n+1}-\tilde{Z}_{n+1,\tau}}{\tau}}_{\mathcal{X}_{\mt}}.
\]
Thanks to Theorem \ref{critical_pt}, we must have 
\[
\norm{\frac{Z_{n+1}-\tilde{Z}_{n+1,\tau}}{\tau}}_{\mathcal{X}_{\mt}}\lesssim_{d,r} \mu^{-\frac{d+r}{2r}} \tau^{1-\frac{d+r}{2r}}\norm{v_{n,\tau}}_{L^r(\Omega)}^2.
\]
If we let $u_{n+1,\tau}^*$ denote the solution to the Stokes resolvent problem 
\[
(I+\mt\mathcal{A}) u_{n+1,\tau}^*=v_{n,\tau},  \quad \nabla \cdot u_{n+1,\tau}^*=0, \quad u_{n+1,\tau}^*|_{\partial\Omega}=0,
\]
then it is clear from the reference point construction in Proposition \ref{prop:reference_point} that
\[
\norm{\frac{\tilde{Z}_{n+1}-\id}{\tau}}_{L^r(\Omega)}\leq \norm{u_{n+1,\tau}^*}_{L^r(\Omega)}\lesssim_{d,r} \norm{v_{n,\tau}}_{L^r(\Omega)},
\]
where the last inequality follows from Lemma \ref{stokes}. 
Therefore, 
\[
\norm{\tilde{v}_{n+1,\tau}}_{L^r(\Omega)}\lesssim_{d,r}
 \mu^{-\frac{d+r}{2r}} \tau^{1-\frac{d+r}{2r}}\norm{v_{n,\tau}}_{L^r(\Omega)}^2+\norm{v_{n,\tau}}_{L^r(\Omega)}
\]

Now we turn to the second statement.
Following a similar idea to the above, we can estimate
\[
\norm{\tilde{v}_{n+1,\tau}-v_{n,\tau}}_{L^2(\Omega)}\leq \norm{\frac{Z_{n+1,\tau}-\tilde{Z}_{n+1,\tau}}{\tau}}_{L^2(\Omega)}+\norm{\frac{\tilde{Z}_{n+1,\tau}-\id}{\tau}-u^*_{n+1,\tau}}_{L^2(\Omega)}+\norm{u^*_{n+1,\tau}-v_{n,\tau}}_{L^2(\Omega)}.
\]
Dominating $L^2$ by $\mathcal{X}_{\mt}$ we can estimate
\[
\norm{\frac{Z_{n+1,\tau}-\tilde{Z}_{n+1,\tau}}{\tau}}_{L^2(\Omega)}\lesssim \norm{\frac{Z_{n+1}-\tilde{Z}_{n+1,\tau}}{\tau}}_{\mathcal{X}_{\mt}}\lesssim_{d,r} \mu^{-\frac{d+r}{2r}} \tau^{1-\frac{d+r}{2r}}\norm{v_{n,\tau}}_{L^r(\Omega)}^2.
\]
Again, from the construction of the reference point, it is immediate that
\[
\norm{\frac{\tilde{Z}_{n+1,\tau}-\id}{\tau}-u^*_{n+1,\tau}}_{L^2(\Omega)}\, ds\leq \tau\norm{Du^*_{n,\tau}}_{L^{\infty}(\Omega)}\norm{u^*_{n,\tau}}_{L^2(\Omega)}.
\]
The Sobolev inequalities and Lemma \ref{stokes}, then give
\[
\tau\norm{Du^*_{n,\tau}}_{L^{\infty}(\Omega)}\norm{u^*_{n,\tau}}_{L^2(\Omega)}\lesssim \mu^{-\frac{d+r}{2r}}\tau^{1-\frac{d+r}{2r}}\norm{v_{n,\tau}}_{L^r(\Omega)}^2.
\]
Finally, we can use the equation satisfied by $u_{n+1,\tau}^*$ to compute
\[
\norm{u^*_{n+1,\tau}-v_{n,\tau}}_{L^2(\Omega)}^2= \mt \big( \mathcal{A} u^*_{n+1,\tau}, u^*_{n+1,\tau}-v_{n,\tau})= \mt \big( D u^*_{n+1,\tau}, Dv_{n,\tau})- \mt \norm{D u^*_{n+1,\tau}}_{L^2(\Omega)}^2 \leq \frac{\mt}{2}\norm{D v_{n,\tau}}^2_{L^2(\Omega)}.
\]
Combining these estimates we get the second result.

\end{proof}

Note that the bounds obtained in Proposition \ref{prop:scheme_ekeland_estimates} present a superlinear growth in $\tau$, and thus they cannot be iterated to generate a uniform bound.  This is because the one-step estimates do not take into account the regularizing effect of the viscosity in the Navier-Stokes equation. In the next subsection we will utilize an approximate Duhamel's fomula to obtain an improved estimate that leverages the regularization effect over time (see Lemma~\ref{lem:duhamel_velocity_bound}).

\subsection{Energy dissipation and Duhamel's formula} 

In this subsection, we will establish discrete analogues of the well-known Navier-Stokes energy dissipation inequality and Duhamel formula. In the following two lemmas, we assume that the scheme (\ref{eq:ns_scheme_step_1}-\ref{eq:ns_scheme_step_3}) is well defined for all iterates $1\leq n < N_{\tau}$. Note that $N_{\tau} \geq 1$ due to Lemma~\ref{from_sec_5}.

\begin{lemma}[Approximate energy dissipation inequality]\label{lem:edi}
\begin{equation}
\norm{v_{n+1,\tau}}_{L^2(\Omega)}^2+2\mt\norm{D v_{n+1,\tau}}_{L^2(\Omega)}^2\leq \norm{v_{n,\tau}}_{L^2(\Omega)}^2 \qquad\hbox{ for } 1\leq n <N_{\tau}
\end{equation}
and 
\begin{equation}\label{EDI}
\norm{v_{n+1,\tau}}_{L^2(\Omega)}^2+2\mt\sum_{j=m+1}^{n+1}\norm{Dv_{j,\tau}}_{L^2(\Omega)}^2\leq \norm{ v_{m,\tau}}_{L^2(\Omega)}^2 \quad \hbox{ for  } 1\leq  m\leq n<N_{\tau}.
\end{equation}
\end{lemma}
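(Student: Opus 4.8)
The plan is to process one step of the scheme substep-by-substep, tracking the $L^2$ energy. The transport substep (\ref{eq:ns_scheme_step_2}) costs nothing: since $Z_{n+1,\tau}\in\mathcal D{\rm iff}_{\id}(\Omega)$ is measure preserving, the change of variables $y=Z_{n+1,\tau}(x)$ gives $\norm{w_{n+1,\tau}}_{L^2(\Omega)}=\norm{v_{n,\tau}}_{L^2(\Omega)}$. (The field $w_{n+1,\tau}$ need not be solenoidal, since transport does not preserve incompressibility, but the operator $e^{-\mt\mathcal A}$ in (\ref{eq:ns_scheme_step_3}) carries the Leray projection $\mathbb P$, which is an orthogonal projection and hence non-expansive on $L^2$; so this is harmless for the estimate.) Thus the lemma reduces to the one-step dissipation inequality for the Stokes semigroup,
\[
\norm{e^{-\mt\mathcal A}w}_{L^2(\Omega)}^2+2\mt\,\norm{D\,e^{-\mt\mathcal A}w}_{L^2(\Omega)}^2\ \le\ \norm{w}_{L^2(\Omega)}^2\qquad\text{for }w\in L^2(\Omega;\Rd),
\]
applied with $w=w_{n+1,\tau}$, so that $e^{-\mt\mathcal A}w=v_{n+1,\tau}$.

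To prove this inequality, set $v(t):=e^{-t\mathcal A}w$ for $t\in(0,\mt]$ (with $\mathbb P$ built in). By the smoothing property of the analytic Stokes semigroup, $v(t)\in K_2$, so it is divergence free, vanishes on $\partial\Omega$, and $t\mapsto v(t)$ is smooth on $(0,\mt]$. Using $\mathcal A=-\mathbb P\Delta$, the self-adjointness of $\mathbb P$, and integration by parts with no boundary term, one gets $(\mathcal A v(t),v(t))=\norm{Dv(t)}_{L^2(\Omega)}^2$, hence
\[
\frac{d}{dt}\,\norm{v(t)}_{L^2(\Omega)}^2=-2\,\norm{Dv(t)}_{L^2(\Omega)}^2 .
\]
Differentiating once more---equivalently, writing $\norm{Dv(t)}_{L^2(\Omega)}^2=\norm{\mathcal A^{1/2}v(t)}_{L^2(\Omega)}^2$ and using $\partial_t v=-\mathcal A v$---gives $\tfrac{d}{dt}\norm{Dv(t)}_{L^2(\Omega)}^2=-2\norm{\mathcal A v(t)}_{L^2(\Omega)}^2\le 0$, so $t\mapsto\norm{Dv(t)}_{L^2(\Omega)}^2$ is non-increasing on $(0,\mt]$. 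Integrating the energy identity over $[0,\mt]$ and bounding the integrand below by its value at $t=\mt$,
\[
\norm{w}_{L^2(\Omega)}^2-\norm{v(\mt)}_{L^2(\Omega)}^2=2\int_0^{\mt}\norm{Dv(t)}_{L^2(\Omega)}^2\,dt\ \ge\ 2\mt\,\norm{Dv(\mt)}_{L^2(\Omega)}^2 ,
\]
which rearranges to the one-step inequality; combined with the isometry of (\ref{eq:ns_scheme_step_2}) this proves the first displayed inequality of the lemma.

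The telescoped form (\ref{EDI}) then follows by summing the one-step inequality $\norm{v_{k+1,\tau}}_{L^2(\Omega)}^2+2\mt\norm{Dv_{k+1,\tau}}_{L^2(\Omega)}^2\le\norm{v_{k,\tau}}_{L^2(\Omega)}^2$ over $k=m,\dots,n$, the leading terms telescoping to $\norm{v_{n+1,\tau}}_{L^2(\Omega)}^2-\norm{v_{m,\tau}}_{L^2(\Omega)}^2$; no smallness input is needed since the scheme is assumed well defined on this index range. I expect the only genuinely delicate point to be the Stokes-semigroup step: one must justify differentiating $\norm{v(t)}_{L^2(\Omega)}^2$ and $\norm{Dv(t)}_{L^2(\Omega)}^2$ in $t$, the identity $(\mathcal A v(t),v(t))=\norm{Dv(t)}_{L^2(\Omega)}^2$ for $t>0$, and the finiteness of $\int_0^{\mt}\norm{Dv(t)}_{L^2(\Omega)}^2\,dt$ (which in fact equals $\tfrac12(\norm{w}_{L^2(\Omega)}^2-\norm{v(\mt)}_{L^2(\Omega)}^2)$)---all standard consequences of the regularizing property of $e^{-t\mathcal A}$ and the membership $v(t)\in K_2$ for $t>0$. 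The change-of-variables isometry, the contraction property of $\mathbb P$, and the telescoping sum are routine.
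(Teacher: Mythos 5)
Your proof is correct and follows essentially the same route as the paper: energy identity for the Stokes semigroup, monotonicity of $t\mapsto\norm{Dv(t)}_{L^2}^2$ obtained by differentiating again (the paper phrases this as testing the time-differentiated Stokes equation against itself, which is the same computation), the $L^2$-isometry of the push-forward by a measure-preserving map, and telescoping. Your observation that the transport step is an isometry (rather than merely non-expansive) is a slight sharpening but does not change the argument.
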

\begin{proof}
Let $h(t):=\norm{e^{-\mu t\mathcal{A}} \PP w_{n+1,\tau}}_{L^2(\Omega)}^2$.  Differentiating in time and then integrating by parts, we have $h'(t)=-2\mu \norm{D e^{-\mu t \mathcal{A}} \PP w_{n+1}}_{L^2(\Omega)}^2$.  Therefore,
\[
h(\tau)+\int_0^{\tau} 2\mu \norm{D e^{-\mu t\mathcal{A}}\PP w_{n+1}}_{L^2(\Omega)}^2\, dt\leq h(0).
\]
Integrating the time derivative of the non-stationary stokes equation against itself, one also has 
\[
2\mt \norm{D v_{n+1,\tau}}_{L^2(\Omega)}^2\leq 2\mu\int_{0}^{\tau} \norm{D e^{-s\mathcal{A}}\PP w_{n+1}}_{L^2(\Omega)}^2\, ds.
\] 
Thus, we can conclude that 
\[
\norm{v_{n+1,\tau}}_{L^2(\Omega)}^2+2\mt\norm{D v_{n+1,\tau}}_{L^2(\Omega)}^2\leq \norm{\PP w_{n+1,\tau}}_{L^2(\Omega)}^2\leq \norm{w_{n+1,\tau}}_{L^2(\Omega)}^2
\]

Finally, using the definition of $w_{n+1,\tau}$ we have 
\[
 \norm{ w_{n+1}}_{L^2(\Omega)}= \norm{ Z_{n+1\,\#}v_{n,\tau}}_{L^2(\Omega)}\leq \norm{v_{n,\tau}}_{L^2(\Omega)}.
\]
Combining our work, we obtain the first result.  The second result follows from iteration. 
\end{proof}

Next, we will show that our scheme satisfies a discrete analogue of the Navier-Stokes Duhamel formula.  The Duhamel formula will play the central role in our subsequent analysis. Indeed, we will characterize our solution using the Duhamel formula and we will also use it to obtain a short time bound on the velocity $L^r$ norm. 
\begin{lemma}[Approximate Duhamel formula]\label{lem:approximate_duhamel}  
 If $f:\Omega\to \RR^d$ is a smooth divergence free vector field that vanishes on $\partial\Omega$, then for any $n<N_{\tau}$ we have
\begin{equation}\label{duhamel}
(v_{n+1,\tau},f)=(v_0, f_{n+1})+\sum_{k=0}^n \Big(v_{k,\tau}, f_{n+1-k}\circ Z_{k+1,\tau}-f_{n+1-k}\Big),
\end{equation}
where $f_k=e^{ -\tau k\mu A} f$. 
\end{lemma}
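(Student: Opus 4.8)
The plan is to prove \eqref{duhamel} by induction on $n$, using the two-step structure of the scheme \eqref{eq:ns_scheme_step_2}--\eqref{eq:ns_scheme_step_3}. The key observation is that $e^{-\mu t\mathcal{A}}$ is a self-adjoint semigroup on divergence-free fields, so for any smooth divergence-free $g$ vanishing on $\partial\Omega$ one has $(e^{-\mt\mathcal{A}}w,g)=(w,e^{-\mt\mathcal{A}}g)$; and that pushforward by a measure-preserving map $Z$ satisfies the change of variables identity $(Z_{\#}v,g)=(v,g\circ Z)$. First I would record these two facts explicitly, since they are the only analytic inputs beyond the definition of the scheme.

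For the base case $n=0$, the definitions $v_{1,\tau}=e^{-\mt\mathcal{A}}w_{1,\tau}$ and $w_{1,\tau}=Z_{1,\tau\,\#}v_{0,\tau}=Z_{1,\tau\,\#}v_0$ give, for a test field $f$,
\[
(v_{1,\tau},f)=(w_{1,\tau},e^{-\mt\mathcal{A}}f)=(v_0,f_1\circ Z_{1,\tau})=(v_0,f_1)+(v_0,f_1\circ Z_{1,\tau}-f_1),
\]
which is exactly \eqref{duhamel} for $n=0$. For the inductive step, assume the formula holds at level $n-1$ for every admissible test field. Starting from
\[
(v_{n+1,\tau},f)=(v_{n,\tau},f_1\circ Z_{n+1,\tau})=(v_{n,\tau},f_1)+(v_{n,\tau},f_1\circ Z_{n+1,\tau}-f_1),
\]
(using $f_1=e^{-\mt\mathcal{A}}f$, self-adjointness, and the measure-preserving identity applied to $Z_{n+1,\tau}$), I would apply the induction hypothesis to the term $(v_{n,\tau},f_1)$ with test field $f_1$ — noting that $f_1$ is again smooth, divergence-free, and vanishes on $\partial\Omega$, so the hypothesis applies, and that $(f_1)_k=e^{-\tau k\mu\mathcal{A}}f_1=f_{k+1}$. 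This turns $(v_{n,\tau},f_1)$ into $(v_0,f_{n+1})+\sum_{k=0}^{n-1}(v_{k,\tau},f_{n+1-k}\circ Z_{k+1,\tau}-f_{n+1-k})$. Adding back the leftover term $(v_{n,\tau},f_1\circ Z_{n+1,\tau}-f_1)$, which is precisely the $k=n$ summand, reassembles the full sum $\sum_{k=0}^n$ and completes the induction.

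The only point requiring a little care — and the place I would be most careful — is the bookkeeping of which semigroup time each $f_k$ carries when the induction hypothesis is invoked with a shifted test field, i.e. verifying $(f_1)_k=f_{k+1}$ so that the reindexing $k\mapsto k+1$ produces exactly the range $k=1,\dots,n$ for the inherited sum while the freshly peeled-off term supplies $k=n$ and the transport-free boundary term supplies nothing new (it is already written as the $k=n$ term after using $f_{n+1-n}=f_1$). There is no genuine analytic obstacle here; the content is entirely the self-adjointness of $e^{-\mu t\mathcal{A}}$ on $\mathcal V_{\id}$ together with the measure-preserving property of each $Z_{k+1,\tau}$, both of which are available from the setup, so the proof is a short and clean induction.
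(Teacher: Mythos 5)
Your proof is correct and takes essentially the same approach as the paper: the paper establishes the same one-step identity $(v_{n+1,\tau},f)=(v_{n,\tau},f_1)+(v_{n,\tau},f_1\circ Z_{n+1,\tau}-f_1)$ via self-adjointness of the Stokes semigroup (phrased as integration by parts) and measure preservation, and then says ``iterating this argument'' gives the result. Your induction on $n$ with the shifted test field $f_1$ (and the check $(f_1)_k=f_{k+1}$) simply makes that iteration explicit.
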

\begin{proof}  \medskip

Let $f:\Omega \rightarrow \Rd$ be a smooth divergence free vector field that vanishes on $\partial \Omega$ and set $f_1:=e^{-\tau \mu \mathcal A} f.$ From integration by parts  and   using the definition of $w_{n+1,\tau}$, we see that
\[
(v_{n+1,\tau}, f)=\big(w_{n+1,\tau}, f_1\big)= (v_{n,\tau}, f_1\circ Z_{n+1,\tau}).
\]
Thus, we can conclude that
\begin{equation}\label{duhamel-discrete}
(v_{n+1,\tau},f)=(v_{n,\tau}, f_{1})+ \Big(v_{n,\tau}, f_{1}\circ Z_{n+1,\tau}-f_{1}\Big).
\end{equation}
Iterating this argument, we get the above result. 
\end{proof}

\subsection{$L^r$ norm control}
 
Based on the Duhamel formula,  we will now show that there exists a time $T^*>0$ such that $\norm{v_{n,\tau}}_{L^r(\Omega)}$ is bounded independently of $\tau$ for all $0\leq n\leq N_{\tau}:=\lfloor\frac{T^*}{\tau}\rfloor$.  This will establish \eqref{small} for all iterates in the range $0\leq n\leq N_{\tau}:=\lfloor\frac{T^*}{\tau}\rfloor$ once $\tau$ is sufficiently small.

\begin{lemma}\label{lem:duhamel_velocity_bound} 
\begin{align}
\norm{v_{n+1,\tau}}_{L^r(\Omega)}\lesssim_{r,d} & \;\; \norm{e^{-\tau(n+1)\mathcal{A}}v_0}_{L^r(\Omega)}\nonumber\\
+&\tau  \sum_{k=0}^n \big( \mu \tau(n+1-k)\big)^{-\frac{d+r}{2r}}\Big(\norm{v_{k,\tau}}_{L^r(\Omega)}^2+\mu^{-\frac{d+r}{2r}} \tau^{1-\frac{d+r}{2r}}\norm{v_{n,\tau}}_{L^r(\Omega)}^3\Big). \label{eq:may25.2020.8}
\end{align} 
\end{lemma}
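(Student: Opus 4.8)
The plan is to run the approximate Duhamel formula from Lemma \ref{lem:approximate_duhamel} in $L^r$ by duality, estimating the transport defect term $f_{n+1-k}\circ Z_{k+1,\tau}-f_{n+1-k}$ using the smoothing of the Stokes semigroup together with the $L^r$-closeness of $Z_{k+1,\tau}$ to the identity coming from the $W^{2,r}$ theory of Section \ref{sec:main_estimates}. First I would fix a smooth divergence-free test field $f$ vanishing on $\partial\Omega$ with $\norm{f}_{L^{r'}(\Omega)}\le 1$ and write, using \eqref{duhamel},
\[
(v_{n+1,\tau},f)=(v_0,f_{n+1})+\sum_{k=0}^n\big(v_{k,\tau},\,f_{n+1-k}\circ Z_{k+1,\tau}-f_{n+1-k}\big),
\]
where $f_j=e^{-\tau j\mu\mathcal A}f$. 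The first term is bounded by $\norm{e^{-\tau(n+1)\mu\mathcal A}v_0}_{L^r(\Omega)}$ by duality and self-adjointness of the semigroup (after restoring the $\mu$ that is suppressed in the statement). For each summand I would estimate
\[
\abs{\big(v_{k,\tau},\,f_{n+1-k}\circ Z_{k+1,\tau}-f_{n+1-k}\big)}\le \norm{v_{k,\tau}}_{L^r(\Omega)}\,\norm{f_{n+1-k}\circ Z_{k+1,\tau}-f_{n+1-k}}_{L^{r'}(\Omega)},
\]
and then, since $Z_{k+1,\tau}$ is a measure-preserving diffeomorphism close to the identity, bound the transport defect by $\norm{D f_{n+1-k}}_{L^{\infty}(\Omega)}\norm{Z_{k+1,\tau}-\id}_{L^{r'}(\Omega)}$ — or, more robustly, by $\norm{\nabla f_{n+1-k}}_{L^{r'}(\Omega)}\norm{Z_{k+1,\tau}-\id}_{L^{\infty}(\Omega)}$, whichever pairing makes the semigroup smoothing estimate clean.

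The two key inputs are then: (i) the parabolic smoothing estimate $\norm{\nabla e^{-t\mu\mathcal A}f}_{L^{r'}(\Omega)}\lesssim_{r,d}(\mu t)^{-1/2}\norm{f}_{L^{r'}(\Omega)}$ for the Stokes semigroup (standard; cf.\ \cite{Giga}), applied with $t=\mu\tau(n+1-k)$ — actually I want an $L^{r'}\to L^{\infty}$ or a gradient bound that produces the exponent $\frac{d+r}{2r}=\frac12+\frac{d}{2r}$, so I would combine the gradient estimate with the $L^{r'}\to L^{\infty}$ smoothing $\norm{e^{-t\mu\mathcal A}}_{r'\to\infty}\lesssim (\mu t)^{-d/(2r)}$, giving $\norm{\nabla f_{n+1-k}}_{L^{\infty}}\lesssim(\mu\tau(n+1-k))^{-\frac{d+r}{2r}}$; and (ii) the bound on $\norm{Z_{k+1,\tau}-\id}_{L^{\infty}(\Omega)}$ (equivalently $\norm{Z_{k+1,\tau}-\id}_{\mathcal X_{\mt}}$), which by Theorem \ref{critical_pt} and Proposition \ref{prop:reference_point} — applied with $a=\mt$, $\delta=\tau\norm{v_{k,\tau}}_{L^r(\Omega)}$, $\delta'=0$ — is controlled by $\tau\norm{v_{k,\tau}}_{L^r(\Omega)}$ plus the quadratic correction $\mu^{-\frac{d+r}{2r}}\tau^{2-\frac{d+r}{2r}}\norm{v_{k,\tau}}_{L^r(\Omega)}^2$; alternatively one feeds in Proposition \ref{prop:scheme_ekeland_estimates} directly, since $Z_{k+1,\tau}-\id=\tau\tilde v_{k+1,\tau}$ and the estimate \eqref{eq:sep20.2020.0} gives exactly $\norm{\tilde v_{k+1,\tau}}_{L^r(\Omega)}\lesssim\norm{v_{k,\tau}}_{L^r(\Omega)}+\mu^{-\frac{d+r}{2r}}\tau^{1-\frac{d+r}{2r}}\norm{v_{k,\tau}}_{L^r(\Omega)}^2$. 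Multiplying the $k$-th transport-defect factor $(\mu\tau(n+1-k))^{-\frac{d+r}{2r}}$ by $\norm{v_{k,\tau}}_{L^r}\cdot\tau\norm{\tilde v_{k+1,\tau}}_{L^r}$ and inserting \eqref{eq:sep20.2020.0} produces precisely the bracket $\norm{v_{k,\tau}}_{L^r(\Omega)}^2+\mu^{-\frac{d+r}{2r}}\tau^{1-\frac{d+r}{2r}}\norm{v_{k,\tau}}_{L^r(\Omega)}^3$ (up to replacing $v_{k,\tau}$ by $v_{n,\tau}$ in the cubic term, which is harmless if one first takes a supremum over $k\le n$, as is implicit in the statement); summing over $k=0,\dots,n$ and taking the supremum over $\norm{f}_{L^{r'}(\Omega)}\le1$ yields \eqref{eq:may25.2020.8}.

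The main obstacle I anticipate is bookkeeping the mismatch between the two natural ways of estimating the transport defect — pairing $\norm{Df}_{L^\infty}$ with $\norm{Z-\id}_{L^{r'}}$ versus $\norm{\nabla f}_{L^{r'}}$ with $\norm{Z-\id}_{L^\infty}$ — and making sure the resulting exponent of $\mu\tau(n+1-k)$ comes out to exactly $\frac{d+r}{2r}$ rather than $\frac12$ or $\frac{d}{2r}$; this forces a careful choice of which $L^p$-to-$L^q$ Stokes smoothing estimate to invoke at each step, and one must check that the singularity $(\mu\tau(n+1-k))^{-\frac{d+r}{2r}}$ is still summable after multiplication by $\tau$ (it is, since $\frac{d+r}{2r}<1\iff r>d$, which is our standing assumption — the prefactor $\tau\sum_k(\mu\tau(n+1-k))^{-\frac{d+r}{2r}}$ is a Riemann sum for a convergent integral). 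A secondary, purely notational point is that the statement as written suppresses the viscosity $\mu$ inside $e^{-\tau(n+1)\mathcal A}$ and inside several prefactors; I would restore it consistently (writing $e^{-\mu\tau(n+1)\mathcal A}$ and tracking $\mu$ exactly as in Proposition \ref{prop:scheme_ekeland_estimates}) so the constants match the uses of this lemma in Section 6.3.
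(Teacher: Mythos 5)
Your overall skeleton — Duhamel plus $L^r$ duality, Stokes semigroup smoothing, and Proposition~\ref{prop:scheme_ekeland_estimates} to control $\norm{(Z_{k+1,\tau}-\id)/\tau}_{L^r(\Omega)}$ — is the paper's, and you correctly flag the crux: which H\"older pairing yields the exponent $\frac{d+r}{2r}$. But neither pairing you propose actually does, and the specific smoothing arithmetic you invoke is wrong. (a) Pairing $\norm{Df_{n+1-k}}_{L^\infty}\norm{Z_{k+1,\tau}-\id}_{L^{r'}}$ requires $L^{r'}\to L^\infty$ smoothing of $\nabla e^{-t\mathcal A}$; the rate is $(\mu t)^{-1/2-d/(2r')}=(\mu t)^{-1/2-d(r-1)/(2r)}$, not $(\mu t)^{-1/2-d/(2r)}$ as you write (you have stated $\norm{e^{-t\mu\mathcal A}}_{r'\to\infty}\lesssim(\mu t)^{-d/(2r)}$, which is the $L^r\to L^\infty$ rate, not $L^{r'}\to L^\infty$). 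With the correct rate, $\frac12+\frac{d(r-1)}{2r}\ge1$ whenever $d\ge2$ and $r>d$, so $\tau\sum_k(\mu\tau(n+1-k))^{-\sigma}$ does not converge as $\tau\to0$ and this pairing fails outright. (b) Pairing $\norm{\nabla f_{n+1-k}}_{L^{r'}}\norm{Z_{k+1,\tau}-\id}_{L^\infty}$ gives the summable rate $(\mu\tau(n+1-k))^{-1/2}$, but bounding $\norm{Z_{k+1,\tau}-\id}_{L^\infty}$ via the $\mathcal X_{\mt}$ norm costs the Gagliardo--Nirenberg factor $K_{\mt}\sim(\mt)^{-\frac{d+r}{2r}}$, producing an estimate of a different shape that does not match~\eqref{eq:may25.2020.8}.

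The missing ingredient is Remark~\ref{rem:w1p_lip}(ii). Via the Hardy--Littlewood maximal function pointwise inequality $|f(Z(x))-f(x)|\le\big(g(Z(x))+g(x)\big)|Z(x)-x|$ with $g\sim M(\nabla f)$, and the measure-preservation of $Z_{k+1,\tau}$, one gets
\[
\norm{f_{n+1-k}\circ Z_{k+1,\tau}-f_{n+1-k}}_{L^{r'}(\Omega)}
\lesssim \norm{Z_{k+1,\tau}-\id}_{L^r(\Omega)}\,\norm{Df_{n+1-k}}_{L^{r/(r-2)}(\Omega)}.
\]
This keeps $Z_{k+1,\tau}-\id$ in $L^r$ — where Proposition~\ref{prop:scheme_ekeland_estimates} controls it — while placing $Df_{n+1-k}$ in the intermediate space $L^{r/(r-2)}$. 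Now Lemma~\ref{lem:para_est_0} with $q=r'=r/(r-1)$ and $p=r/(r-2)$ yields $\norm{Df_{n+1-k}}_{L^{r/(r-2)}}\lesssim(\mu\tau(n+1-k))^{-\frac{d+r}{2r}}$, precisely because $q^{-1}-p^{-1}=\frac{1}{r}$ so $\sigma=\frac{d}{2r}+\frac12=\frac{d+r}{2r}$. This is the unique H\"older split consistent with both the available $L^r$ control on $Z-\id$ and the stated exponent, and it is not one of your two ``natural'' options. Once this lemma is inserted, your final assembly (multiplying by $\norm{v_{k,\tau}}_{L^r}\cdot\tau\norm{\tilde v_{k+1,\tau}}_{L^r}$, inserting~\eqref{eq:sep20.2020.0}, summing, and taking the supremum over divergence-free $f$ with $\norm{f}_{L^{r'}}\le1$, at the cost of a factor $\norm{\PP}_{r}$ to pass from arbitrary to divergence-free test fields) is exactly the paper's.
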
 


\begin{proof} 
Note that $v_{n+1,\tau}$ is well defined by Lemma~\ref{from_sec_5}.
Let $f$ be a smooth divergence free vector field vanishing on $\partial\Omega$, and define $f_k:=e^{ -\tau k\mu \mathcal{A}} f$ as in Lemma \ref{lem:approximate_duhamel}. We first use H\"older's inequality and second use Remark \ref{rem:w1p_lip} to conclude that 
\[
 \Big(v_{k,\tau}, f_{n+1-k}\circ Z_{k+1,\tau}-f_{n+1-k}\Big) \leq C\norm{v_{k,\tau}}_{L^r(\Omega)}  \| Z_{k+1,\tau}-\id\|_{L^{r}(\Omega)}\|Df_{n+1-k}\|_{L^{r \over r-2}(\Omega)}.
\]
Taking the supremum over smooth divergence free vector fields $f$ in the unit ball of $L^{\frac{r}{r-1}}(\Omega)$ and using Lemma \ref{lem:para_est_0}, we can conclude that 
\[
\sup_{\nabla \cdot f=0,\; \norm{f}_{L^{\frac{r}{r-1}}(\Omega)}\leq 1} (v_{n+1,\tau},f)\lesssim
\]
\[
 \norm{e^{-\tau(n+1)\mathcal{A}}v_0}_{L^r(\Omega)}
+\tau C \sum_{k=0}^n\big(  \mu\tau(n+1-k)\big)^{-\frac{d+r}{2r}}\norm{v_{k,\tau}}_{L^r(\Omega)}\norm{\frac{Z_{k+1,\tau}-\id}{\tau}}_{L^r(\Omega)},
\]
where the first inequality is from \eqref{duhamel}.
Note that 
\[
\norm{v_{n+1,\tau}}_{L^r(\Omega)}=\sup_{\norm{f}_{L^{\frac{r}{r-1}}(\Omega)}\leq 1} (v_{n+1,\tau},f)=\sup_{\norm{f}_{L^{\frac{r}{r-1}}(\Omega)}\leq 1} (v_{n+1,\tau},\PP f),
\]
where the last equality follows from the fact that $v_{n+1,\tau}$ is divergence free.   Hence,
\[
\norm{v_{n+1,\tau}}_{L^r(\Omega)} \leq \norm{\PP}_r \sup_{\nabla \cdot f=0,\; \norm{f}_{L^{\frac{r}{r-1}}(\Omega)}\leq 1} (v_{n+1,\tau},f).
\]
Applying Proposition \ref{prop:scheme_ekeland_estimates} to $\norm{\frac{Z_{k+1,\tau}-\id}{\tau}}_{L^r(\Omega)}$ we obtain the result.

\end{proof}

\begin{prop}\label{prop:critical_time}
There exists a time $T^*>0$ and some $\tau_0>0$ depending on $\norm{v_0}_{L^r(\Omega)}$, $r,d$ and $\mu$ such that for all $0<\tau \leq \tau_0$ if $\tau(n+1)< T^*$, then $\sup_{k\leq n+1} \norm{v_{k,\tau}}_{L^r(\Omega)}$ is bounded independently of $\tau$. 
\end{prop}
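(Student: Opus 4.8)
The plan is a discrete bootstrap (continuation) argument driven by the approximate Duhamel estimate of Lemma~\ref{lem:duhamel_velocity_bound}. Write $\alpha:=\frac{d+r}{2r}$; since $d<r$ one has $\alpha\in(\tfrac12,1)$, so the Riemann-sum bound $\tau\sum_{k=0}^{n}(\mu\tau(n+1-k))^{-\alpha}=\mu^{-\alpha}\tau^{1-\alpha}\sum_{j=1}^{n+1}j^{-\alpha}\lesssim_{\alpha}\mu^{-\alpha}((n+1)\tau)^{1-\alpha}$ holds --- the discrete reflection of the convergence of $\int_0^{T}(\mu s)^{-\alpha}\,ds$. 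Together with the $L^r$-boundedness of the Stokes semigroup $e^{-t\mathcal A}$ (cf.\ Lemma~\ref{lem:para_est_0}), this lets one rewrite Lemma~\ref{lem:duhamel_velocity_bound} in the form: there is $C_0=C_0(d,r)\ge1$ such that, if $(n+1)\tau<T^*$ and $M:=\sup_{k\le n}\norm{v_{k,\tau}}_{L^r(\Omega)}$, then
\[
\norm{v_{n+1,\tau}}_{L^r(\Omega)}\le C_0\norm{v_0}_{L^r(\Omega)}+C_0\,\mu^{-\alpha}(T^*)^{1-\alpha}\Big(M^2+\mu^{-\alpha}\tau^{1-\alpha}M^3\Big).
\]
Set $R:=2C_0\norm{v_0}_{L^r(\Omega)}$, which is independent of $\tau$. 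I would then prove, by strong induction on $n$, that for every $n$ with $n\tau<T^*$ the iterate $v_{n,\tau}$ is well defined and $\norm{v_{n,\tau}}_{L^r(\Omega)}\le R$, where $T^*>0$ and $\tau_0>0$ are selected at the end depending only on $\norm{v_0}_{L^r(\Omega)},r,d,\mu$.

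The base case $n=0$ is immediate since $v_{0,\tau}=v_0$. For the inductive step I would fix $n\ge0$ with $(n+1)\tau<T^*$ and assume $v_{k,\tau}$ is well defined with $\norm{v_{k,\tau}}_{L^r(\Omega)}\le R$ for all $k\le n$. Because $r>d$, the exponent $\frac{d-r}{4r}$ is negative, so $\mu^{\frac{d+3r}{4r}}\tau^{\frac{d-r}{4r}}\to\infty$ as $\tau\to0$; hence once $\tau_0$ is small enough (in terms of $R,\mu,r,d$) the hypothesis $\norm{v_{n,\tau}}_{L^r(\Omega)}\le R$ forces the smallness condition~\eqref{small}, and Lemma~\ref{from_sec_5} guarantees that $v_{n+1,\tau}$ is well defined, so the displayed estimate applies with $M\le R$. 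Shrinking $\tau_0$ further so that $\mu^{-\alpha}\tau_0^{1-\alpha}R\le1$, the bracket $M^2+\mu^{-\alpha}\tau^{1-\alpha}M^3$ is at most $2R^2$, whence
\[
\norm{v_{n+1,\tau}}_{L^r(\Omega)}\le C_0\norm{v_0}_{L^r(\Omega)}+2C_0\,\mu^{-\alpha}(T^*)^{1-\alpha}R^2.
\]
Now I would choose $T^*>0$ small --- possible since $1-\alpha>0$ --- so that $2C_0\mu^{-\alpha}(T^*)^{1-\alpha}R^2\le C_0\norm{v_0}_{L^r(\Omega)}=R/2$, which makes $T^*$ depend only on $\norm{v_0}_{L^r(\Omega)},\mu,r,d$. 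Then $\norm{v_{n+1,\tau}}_{L^r(\Omega)}\le R/2+R/2=R$, which closes the induction and yields the proposition.

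I expect the only genuine obstacle to be the bookkeeping rather than any single inequality: well-definedness of the scheme and the a priori $L^r$ bound must be propagated \emph{together} through the same induction, since Lemma~\ref{lem:duhamel_velocity_bound} presupposes that the iterates exist; and one must check that $C_0$ and all implicit constants are fixed before $T^*$ and $\tau_0$ are chosen, so that the successive smallness requirements are not circular. The Riemann-sum bound (available precisely because $r>d$ forces $\alpha<1$), the $L^r$-boundedness of the Stokes semigroup, and the absorption of the cubic term via $\tau\le\tau_0$ are all routine.
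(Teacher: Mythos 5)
Your argument is correct and follows essentially the same route as the paper: the Riemann-sum bound $\tau\sum_{k=0}^{n}(\mu\tau(n+1-k))^{-\alpha}\lesssim\mu^{-\alpha}((n+1)\tau)^{1-\alpha}$ (valid because $r>d$ makes $\alpha<1$), absorption of the cubic term for $\tau\le\tau_0$, and a bootstrap choice of $T^*$ making the quadratic contribution at most $R/2$. The only cosmetic difference is that the paper packages the induction by introducing a dominating sequence $h_k$ and showing $h_k\le 2h_0$, whereas you run the strong induction directly on $\norm{v_{k,\tau}}_{L^r(\Omega)}$; you are also somewhat more explicit than the paper about propagating well-definedness through the induction via Lemma~\ref{from_sec_5}, which is a welcome clarification rather than a divergence.
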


\begin{proof}
From Lemma \ref{lem:para_est_0} and Lemma  \ref{lem:duhamel_velocity_bound} there exist constants $C_0=C_0(r,d,\mu)$  
\[
\norm{v_{n+1,\tau}}_{L^r(\Omega)}\leq C_0\norm{v_0}_{L^r(\Omega)}+\tau  \sum_{k=0}^n \big( \mu \tau(n+1-k)\big)^{-\alpha}\Big(\norm{v_{k,\tau}}_{L^r(\Omega)}^2+\mu^{-\alpha} \tau^{1-\alpha}\norm{v_{n,\tau}}_{L^r(\Omega)}^3\Big). 
\]
where $\alpha := \frac{1}{2} + \frac{d}{2r} <1$. Hence if $\{h_k\}_{k\geq 0}$ solves $h_0=C_0\norm{v_0}_{L^r(\Omega)}$ and 
\begin{equation}\label{eq:h_sequence}
h_{n+1}=h_0+\tau C_1\sum_{k=0}^n \big( \mu \tau(n+1-k)\big)^{-\alpha}(h_k^2+\mu^{-\alpha} \tau^{1-\alpha}h_k^3),
\end{equation}
then $\norm{v_{k,\tau}}_{L^r(\Omega)}\leq h_{k}$.    Suppose that $h_k \leq M:= 2h_0$ for $k=1, \ldots, n$. Then we have 
$$
h_{n+1} \leq \frac{M}{2} + C_1(\tau \mu(n+1))^{1-\alpha} (M^2 + \mu^{-\alpha}\tau^{1-\alpha} M^3).
$$
This is less than $M$ as long as
$$
\tau(n+1)\leq T_D := (\frac{1}{4C_1M})^{\frac{1}{1-\alpha}} \hbox{ and } \tau \leq \tau_0:= (\frac{\mu^{\alpha}}{M})^{\frac{1}{1-\alpha}}.
$$
Thus, we see that the velocity doubling time $T_D$ is uniformly  bounded from below for all $\tau \in [0,\tau_0]$.  Since $T_D$ is a strictly positive lower bound for $T^*$, we are done. 
\end{proof}

\subsection{Convergence of the scheme to Eulerian and Lagrangian solutions}

Given the existence of the critical time $T^*>0$ from the previous section, we at last show that the approximate solutions  $v_{\tau}$ and $X_{\tau}$ converge to Eulerian and Lagrangian solutions of the Navier-Stokes equations respectively. 

\begin{prop}\label{prop:precompactness}
Let $T^*$ be as given in Proposition~\ref{prop:critical_time}. Then for any $T<T^*$ the family $\{v_{\tau}\}_{\tau\geq 0}$ is  uniformly bounded in 
\[
L^2\Big([0,T]; H^1_0(\Omega)\Big)\cap L^{\infty}\Big([0,T];L^2(\Omega)\Big)
\]
 and precompact in $L^2([0,T]\times\Omega)$.  
\end{prop}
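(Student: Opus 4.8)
The plan is to extract the uniform bounds from the discrete energy dissipation inequality and then deduce precompactness from an Aubin--Lions--Simon argument, where the missing equicontinuity-in-time estimate is supplied by iterating the one-step identity \eqref{duhamel-discrete}. Fix $T<T^*$; by Proposition~\ref{prop:critical_time} and Lemma~\ref{from_sec_5} the scheme is well defined on $[0,T]$ for all sufficiently small $\tau$, and $R:=\sup_{\tau}\sup_{0\le n\tau\le T}\norm{v_{n,\tau}}_{L^r(\Omega)}<\infty$. For Step~1, recall $\Omega$ is bounded, so $v_0\in L^r(\Omega)\subset L^2(\Omega)$; by Lemma~\ref{lem:edi} (whose proof also covers the first step, giving $\norm{v_{1,\tau}}_{L^2(\Omega)}\le\norm{v_0}_{L^2(\Omega)}$) the sequence $\norm{v_{n,\tau}}_{L^2(\Omega)}$ is nonincreasing, hence $\{v_{\tau}\}$ is uniformly bounded in $L^{\infty}([0,T];L^2(\Omega))$. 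Summing \eqref{EDI} gives $2\mu\tau\sum_j\norm{Dv_{j,\tau}}_{L^2(\Omega)}^2\le\norm{v_0}_{L^2(\Omega)}^2$, i.e.\ $\int_0^T\norm{Dv_{\tau}(t)}_{L^2(\Omega)}^2\,dt\le(2\mu)^{-1}\norm{v_0}_{L^2(\Omega)}^2$; since each $v_{j,\tau}$ vanishes on $\partial\Omega$, Poincar\'e's inequality promotes this to a uniform bound for $\int_0^T\norm{v_{\tau}(t)}_{H^1_0(\Omega)}^2\,dt$, which is the asserted uniform boundedness.

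For Step~2 (equicontinuity of time translates) fix $s>1+\tfrac d2$. Let $f$ be a smooth divergence free vector field vanishing on $\partial\Omega$ with $\norm{f}_{H^s(\Omega)}\le1$, and set $g:=e^{-\mt\mathcal A}f$; since $\mathcal A$ is nonnegative self-adjoint, $\norm{g}_{H^s(\Omega)}\lesssim1$, whence $\norm{Dg}_{L^{\infty}(\Omega)}\lesssim_{d,s}1$, while $\norm{(e^{-\mt\mathcal A}-I)f}_{L^2(\Omega)}\le\mu\tau\norm{\mathcal A f}_{L^2(\Omega)}\lesssim\mu\tau$. Iterating \eqref{duhamel-discrete} between indices $m$ and $m+\ell$ gives
\begin{equation*}
(v_{m+\ell,\tau}-v_{m,\tau},f)=\sum_{k=m}^{m+\ell-1}\Big[\big(v_{k,\tau},(e^{-\mt\mathcal A}-I)f\big)+\big(v_{k,\tau},g\circ Z_{k+1,\tau}-g\big)\Big].
\end{equation*}
The first summand is bounded by $\norm{v_{k,\tau}}_{L^2(\Omega)}\norm{(e^{-\mt\mathcal A}-I)f}_{L^2(\Omega)}\lesssim\mu\tau\norm{v_0}_{L^2(\Omega)}$. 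For the second, $|g(Z_{k+1,\tau}(x))-g(x)|\le\norm{Dg}_{L^{\infty}(\Omega)}|Z_{k+1,\tau}(x)-x|$ pointwise, and since $r>d\ge2$ we have $r':=\tfrac{r}{r-1}<2$, so by H\"older and $L^{r'}(\Omega)\hookrightarrow L^2(\Omega)$,
\begin{equation*}
|(v_{k,\tau},g\circ Z_{k+1,\tau}-g)|\lesssim_{d,s}\norm{v_{k,\tau}}_{L^2(\Omega)}\norm{Z_{k+1,\tau}-\id}_{L^r(\Omega)}.
\end{equation*}
Here the one-step estimate \eqref{eq:sep20.2020.0} of Proposition~\ref{prop:scheme_ekeland_estimates} gives $\norm{Z_{k+1,\tau}-\id}_{L^r(\Omega)}=\tau\norm{\tilde v_{k+1,\tau}}_{L^r(\Omega)}\lesssim_{d,r}\tau\norm{v_{k,\tau}}_{L^r(\Omega)}\le\tau R$, the superlinear term being absorbed for $\tau$ small. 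Summing the $\ell$ terms, $|(v_{m+\ell,\tau}-v_{m,\tau},f)|\lesssim\ell\tau\,(\mu+R)\norm{v_0}_{L^2(\Omega)}$ uniformly over admissible $f$; as each increment $v_{m+\ell,\tau}-v_{m,\tau}$ is divergence free (so a general test field may be replaced by its Leray projection, using \eqref{eq:april26.2020.1}), this yields $\norm{v_{m+\ell,\tau}-v_{m,\tau}}_{H^{-s}(\Omega)}\lesssim\ell\tau$. Passing to the piecewise constant interpolant: for $h\ge\tau$ and a.e.\ $t\in[0,T-h]$ the field $v_{\tau}(t+h)-v_{\tau}(t)$ is a difference of iterates whose indices differ by at most $\lceil h/\tau\rceil\le 2h/\tau$, hence $\norm{v_{\tau}(t+h)-v_{\tau}(t)}_{H^{-s}(\Omega)}\lesssim h$; for $h<\tau$ the increment is nonzero only on a $t$-set of measure $\le hT/\tau$ on which it is a single step and thus $\lesssim\tau$ in $H^{-s}(\Omega)$. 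In either regime,
\begin{equation*}
\sup_{0<\tau\le\tau_0}\norm{v_{\tau}(\cdot+h)-v_{\tau}(\cdot)}_{L^2([0,T-h];H^{-s}(\Omega))}\lesssim h^{1/2}\big(h^{1/2}+\tau_0^{1/2}\big)\longrightarrow 0\qquad(h\to0).
\end{equation*}

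By Steps~1--2, $\{v_{\tau}\}$ is bounded in $L^2([0,T];H^1_0(\Omega))$ with time translates that are uniformly small in $L^2([0,T-h];H^{-s}(\Omega))$; since $H^1_0(\Omega)\hookrightarrow\hookrightarrow L^2(\Omega)\hookrightarrow H^{-s}(\Omega)$, the Aubin--Lions--Simon compactness lemma yields precompactness of $\{v_{\tau}\}$ in $L^2([0,T];L^2(\Omega))=L^2([0,T]\times\Omega)$, which is the assertion. The main obstacle is the second summand of Step~2: the one-step Ekeland estimates are only superlinear in $\tau$ once passed to strong norms (cf.\ the remark following Proposition~\ref{prop:scheme_ekeland_estimates}, where it is noted that they cannot be iterated on their own), so the telescoped Duhamel identity must be organized so that only the clean bound $\norm{Z_{k+1,\tau}-\id}_{L^r(\Omega)}\lesssim\tau\norm{v_{k,\tau}}_{L^r(\Omega)}$ is used --- which is legitimate here precisely because the uniform $L^r$ bound of Proposition~\ref{prop:critical_time} is already in hand --- and paired in $L^r$ against $v_{k,\tau}\in L^{r'}\subset L^2$, rather than converting $Z_{k+1,\tau}-\id$ to $L^{\infty}$, which would cost a negative power of $\mu\tau$. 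A minor additional care is the bookkeeping for the regime $h<\tau$ and the reduction of test fields to divergence free ones.
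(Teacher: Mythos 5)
Your proposal is correct and follows the same overall strategy as the paper --- uniform bounds from the discrete energy dissipation inequality, then an Aubin--Lions-type compactness argument fed by the discrete Duhamel identity \eqref{duhamel-discrete} --- but the key ``equicontinuity-in-time'' estimate is obtained by a genuinely different mechanism. The paper bounds the one-step slope $\max_n \norm{\frac{v_{n+1,\tau}-v_{n,\tau}}{\tau}}_{\mathcal{Y}^*}$ with $\mathcal{Y}=\{g\in W^{2,r}_0(\Omega):\nabla\cdot g=0\}$, and controls the advective term by pairing against $\tilde v_{n+1,\tau}$ in $L^2$ and then using the fact that $Z_{n+1}$ is a \emph{minimizer} of the projection problem: comparing with the trivial competitor $\id$ gives $\norm{v_{n,\tau}-\tilde v_{n+1,\tau}}_{L^2(\Omega)}\le\norm{v_{n,\tau}}_{L^2(\Omega)}$, so the final bound depends only on $\norm{v_0}_{L^2(\Omega)}$ and not on the $L^r$ theory. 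You instead work in $H^{-s}$ (with $s>1+d/2$), telescope the Duhamel identity between indices $m$ and $m+\ell$, and control the advective term by pairing $Df_{n+1-k}$ in $L^\infty$ against $Z_{k+1,\tau}-\id$ in $L^r$, invoking the uniform $L^r$ bound of Proposition~\ref{prop:critical_time} together with the one-step estimate \eqref{eq:sep20.2020.0}. Both choices are legitimate; the paper's is slightly more economical in that it bypasses the $L^r$ machinery entirely in this step and gives a clean uniform slope bound, whereas yours leans on estimates that are already available and yields the equivalent time-translate smallness needed for Simon's version of the lemma. Two small slips worth fixing in your write-up: the embedding you need on the bounded domain is $L^2(\Omega)\hookrightarrow L^{r'}(\Omega)$ (since $r'=\frac{r}{r-1}<2$), not the reverse; and when reducing to divergence-free test fields via the Leray projection, one should note that $\PP\phi$ has zero normal trace rather than zero full trace, so it is cleaner to simply define the negative space directly as the dual of the closure of smooth, divergence-free, compactly supported fields (as the telescoped Duhamel identity is stated precisely for such $f$), rather than invoking $\PP$.
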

\begin{proof}
The uniform boundedness is an immediate consequence of \eqref{EDI}.  Precompactness in $L^2([0,T]\times\Omega)$  will follow from modifications of the Aubin-Lions Lemma in \cite{guo_aubin_lions} if we can show that the discrete time derivatives $\frac{v_{n+1,\tau}-v_{n,\tau}}{\tau}$ are uniformly bounded in some weak space.  

Let $\mathcal{Y}:=\{g\in W^{2,r}_0(\Omega): \nabla \cdot g=0\}$.
We wish to estimate
\[
\max_{0\leq n<N} \Bignorm{\frac{v_{n+1,\tau}-v_{n,\tau}}{\tau}}_{\mathcal{Y}^*}. 
\]
Given $f\in\mathcal{Y}$, \eqref{duhamel-discrete} gives us
\[
\bigg(\frac{v_{n+1,\tau}-v_{n,\tau}}{\tau},f\bigg)=\bigg(v_{n,\tau}, \frac{f_1-f}{\tau}\bigg)+\bigg(v_{n,\tau}, \frac{f_1\circ Z_{n+1,\tau}-f_1}{\tau}\bigg)
\]
where $f_1=e^{\mt \mathcal{A}} f$. Thus, it is clear that 
\begin{align*}
\bigg(\frac{v_{n+1,\tau}-v_{n,\tau}}{\tau},f\bigg)\lesssim_{d,r}  & \norm{v_{n,\tau}}_{L^2(\Omega)}\norm{A f}_{L^2(\Omega)}+\norm{v_{n,\tau}}_{L^2(\Omega)}\norm{\tilde{v}_{n+1,\tau}}_{L^2(\Omega)}\norm{Df_1}_{L^{\infty}(\Omega)} \\ 
\lesssim_{d,r} & \norm{f}_{\mathcal{Y}}\bigg(\norm{v_n}_{L^2(\Omega)}+\frac{3}{2}\norm{v_{n,\tau}}_{L^2(\Omega)}^2+\frac{1}{2}\norm{v_{n,\tau}-\tilde{v}_{n+1,\tau}}_{L^2(\Omega)}^2\bigg),
\end{align*}
where the last inequality follows from the Sobolev inequalities.
Note that 
\[
\frac{1}{2}\norm{v_{n,\tau}-\tilde{v}_{n+1,\tau}}_{L^2(\Omega)}^2\leq J_{\mu \tau}(Z_{n+1},v_{n,\tau})+\frac{1}{2}\norm{v_{n,\tau}}^2_{L^2(\Omega)}\leq J_{\mu \tau}(\id, v_{n,\tau})+\frac{1}{2}\norm{v_{n,\tau}}_{L^2(\Omega)}^2=\frac{1}{2}\norm{v_{n,\tau}}_{L^2(\Omega)}^2.
\]
Now we can conclude that 
\[
\max_{0\leq n\leq N}  \Bignorm{\frac{v_{n+1,\tau}-v_{n,\tau}}{\tau}}_{\mathcal{Y}^*}\leq \sup_n \norm{v_{n,\tau}}_{L^2(\Omega)}+2\norm{v_{n,\tau}}_{L^2(\Omega)}^2\leq \norm{v_{0}}_{L^2(\Omega)}+2\norm{v_0}_{L^2(\Omega)}^2,
\]
where the final inequality follows from \eqref{EDI}. 
\end{proof}

\begin{thm}\label{thm:conv_1}
Let $T^*$ be as given in Proposition \ref{prop:critical_time}. Then for any $T<T^*$ there exists $v\in L^{\infty}([0,T];L^r(\Omega))\cap L^2([0,T];H^1_0(\Omega))$ such that
 \[
 \lim_{\tau\to 0} \norm{v-v_{\tau}}_{L^2([0,T]\times\Omega)}=0
 \]
and
 \begin{equation}\label{eq:duhamel_solution}
 v(t,x)= e^{-\mu t\mathcal{A}}v_0 (x)- \int_0^t e^{-\mu (t-s)\mathcal{A}} \PP \nabla \cdot \big( v(s,x)\otimes v(s,x)\big)\, ds \quad\hbox{ for a.e. in} \; [0,T]\times \Omega.
 \end{equation}
 In particular, $v$ is an $L^r$ mild solution to the Navier-Stokes equations \eqref{eq:NSE} discussed in \cite{GigaM}. 
\end{thm}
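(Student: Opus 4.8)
The plan is to extract a subsequential limit of the piecewise-constant interpolants $v_\tau$, pass to the limit in the discrete Duhamel identity of Lemma~\ref{lem:approximate_duhamel}, and then upgrade to convergence of the whole family by invoking the known uniqueness of $L^r$ mild solutions. First I would fix $T<T^*$ and record the uniform bounds: Proposition~\ref{prop:critical_time} bounds $\norm{v_{n,\tau}}_{L^r(\Omega)}$ uniformly in $\tau$ for $n\tau\le T$, so $\{v_\tau\}$ is bounded in $L^\infty([0,T];L^r(\Omega))$, while Proposition~\ref{prop:precompactness} bounds $\{v_\tau\}$ in $L^2([0,T];H^1_0(\Omega))\cap L^\infty([0,T];L^2(\Omega))$ and shows it is precompact in $L^2([0,T]\times\Omega)$. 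Hence along a subsequence $\tau\to0$ one gets $v_\tau\to v$ strongly in $L^2([0,T]\times\Omega)$, weakly in $L^2([0,T];H^1_0(\Omega))$, and weakly-$*$ in $L^\infty([0,T];L^r(\Omega))$; this already endows $v$ with the regularity stated in the theorem, and after a further subsequence we may assume $v_\tau(t,\cdot)\to v(t,\cdot)$ in $L^2(\Omega)$ for a.e.\ $t$.

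Then I would take a smooth divergence-free $f$ vanishing on $\partial\Omega$, fix $t\in(0,T)$, put $n=\lfloor t/\tau\rfloor$ so that $v_\tau(t,\cdot)=v_{n+1,\tau}$, and expand the composition in \eqref{duhamel}. With $f_j:=e^{-\tau j\mu\mathcal{A}}f$ and $Z_{k+1,\tau}=\id+\tau\tilde v_{k+1,\tau}$, a first-order Taylor expansion gives
\[
f_{n+1-k}\circ Z_{k+1,\tau}-f_{n+1-k}=\tau\,(\tilde v_{k+1,\tau}\cdot\nabla)f_{n+1-k}+R_{k,\tau},\qquad \abs{R_{k,\tau}}\le\tfrac12\norm{D^2 f_{n+1-k}}_{L^\infty(\Omega)}\,\tau^2\abs{\tilde v_{k+1,\tau}}^2,
\]
so the sum in \eqref{duhamel} becomes $\tau\sum_{k=0}^n\big(v_{k,\tau}\otimes\tilde v_{k+1,\tau}:Df_{n+1-k}\big)+\sum_{k=0}^n(v_{k,\tau},R_{k,\tau})$. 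The quantitative Ekeland bound of Theorem~\ref{critical_pt} together with Lemma~\ref{stokes} controls $\norm{\tilde v_{k+1,\tau}}_{L^\infty(\Omega)}\lesssim_{d,r}(\mt)^{-\frac{d}{2r}}\norm{v_{k,\tau}}_{L^r(\Omega)}$ and $\norm{\tilde v_{k+1,\tau}}_{L^2(\Omega)}\lesssim_{d,r}\norm{v_{k,\tau}}_{L^2(\Omega)}$; combining this with $\sup_j\norm{f_j}_{W^{2,\infty}(\Omega)}<\infty$ (a standard consequence of the smoothing and decay of the Stokes semigroup on smooth compactly supported data), the $L^2$ bound $\norm{v_{k,\tau}}_{L^2(\Omega)}\le\norm{v_0}_{L^2(\Omega)}$ from \eqref{EDI}, and $(n+1)\tau\le T$, the Taylor remainder obeys $\sum_{k=0}^n\abs{(v_{k,\tau},R_{k,\tau})}\lesssim_{d,r}\tau^{1-\frac{d}{2r}}\to0$. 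In the leading term, Proposition~\ref{prop:scheme_ekeland_estimates} together with the dissipation $\mt\sum_k\norm{Dv_{k,\tau}}_{L^2(\Omega)}^2\lesssim\norm{v_0}_{L^2(\Omega)}^2$ from \eqref{EDI} lets me replace $\tilde v_{k+1,\tau}$ by $v_{k,\tau}$ at the cost of an error $\lesssim_{d,r}\big(\tau^{1/2}+\tau^{\frac{r-d}{2r}}\big)\to0$. The surviving sum $\tau\sum_{k=0}^n\big(v_{k,\tau}\otimes v_{k,\tau}:Df_{n+1-k}\big)$ is a Riemann sum: strong $L^2$ convergence gives $v_\tau\otimes v_\tau\to v\otimes v$ in $L^1([0,T]\times\Omega)$, and $s\mapsto Df_{n+1-\lfloor s/\tau\rfloor}$ converges a.e.\ and boundedly to $De^{-\mu(t-s)\mathcal{A}}f$ by strong continuity and the regularizing bounds for the Stokes semigroup (Lemma~\ref{lem:para_est_0}), so it converges to $\int_0^t\big(v(s)\otimes v(s):De^{-\mu(t-s)\mathcal{A}}f\big)\,ds$. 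Since also $(v_0,f_{n+1})\to(v_0,e^{-\mu t\mathcal{A}}f)$ and $(v_\tau(t,\cdot),f)\to(v(t,\cdot),f)$ for a.e.\ $t$, this yields, for a.e.\ $t\in[0,T]$ and every such $f$,
\[
(v(t,\cdot),f)=(v_0,e^{-\mu t\mathcal{A}}f)+\int_0^t\big(v(s)\otimes v(s):De^{-\mu(t-s)\mathcal{A}}f\big)\,ds .
\]
An integration by parts in $x$ (using $\nabla\cdot v(s)=0$ and $v(s)|_{\partial\Omega}=0$), followed by the self-adjointness of $\mathbb{P}$ and of $e^{-\mu\cdot\mathcal{A}}$ and the identity $\mathbb{P}f=f$, rewrites the right-hand side as $\big(e^{-\mu t\mathcal{A}}v_0-\int_0^t e^{-\mu(t-s)\mathcal{A}}\mathbb{P}\nabla\cdot(v(s)\otimes v(s))\,ds,\,f\big)$; testing against a countable dense family of such $f$ then produces \eqref{eq:duhamel_solution} for a.e.\ $(t,x)$.

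Finally, since $v\in L^\infty([0,T];L^r(\Omega))$ with $r>d$ satisfies \eqref{eq:duhamel_solution}, it is the $L^r$ mild solution of \eqref{eq:NSE}, which is unique by the theory of \cite{GigaM}; consequently every subsequence of $\{v_\tau\}$ has a further subsequence converging to this same $v$ in $L^2([0,T]\times\Omega)$, so the whole family converges and $\norm{v-v_\tau}_{L^2([0,T]\times\Omega)}\to0$. The step I expect to be the main obstacle is the passage to the limit in the nonlinear term: isolating the convective leading order in $f_{n+1-k}\circ Z_{k+1,\tau}-f_{n+1-k}$ and proving that every discretization error genuinely vanishes — the quadratic Taylor remainder, the replacement of $\tilde v_{k+1,\tau}$ by $v_{k,\tau}$, and the Riemann-sum error (including the near-diagonal terms $k\approx n$, where the semigroup has barely acted). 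This depends on the quantitative control of $Z_{k+1,\tau}-\id$ from Theorem~\ref{critical_pt}, on the discrete energy dissipation \eqref{EDI}, on the strong $L^2$ compactness from Proposition~\ref{prop:precompactness} (needed to pass to the limit in $v_\tau\otimes v_\tau$), and decisively on the subcriticality $r>d$, which is exactly what turns every error bound into a strictly positive power of $\tau$.
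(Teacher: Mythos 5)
Your proposal follows the same strategy as the paper's proof and the argument is sound. Both start from the precompactness of $\{v_\tau\}$ in $L^2([0,T]\times\Omega)$ given by Proposition~\ref{prop:precompactness}, both pass to the limit in the discrete Duhamel identity of Lemma~\ref{lem:approximate_duhamel} by isolating the convective leading term $\tau\sum_k (v_{k,\tau}\otimes v_{k,\tau}, Df_{n+1-k})$ and showing the discretization errors vanish, and both close the argument by invoking the uniqueness of $L^r$ mild solutions (Theorem~\ref{thm:F1}) to promote subsequential to full convergence. The paper organizes the error into a single $\epsilon_\tau$ which it then splits into the $|v_{k,\tau}-\tilde v_{k+1,\tau}|$ piece and the quadratic Taylor remainder; you split the error in the same two places plus the Riemann-sum error, and you use the quantitative Ekeland bound from Theorem~\ref{critical_pt} and Lemma~\ref{stokes} to get an $L^\infty$ bound on $\tilde v_{k+1,\tau}$ for the Taylor remainder, whereas the paper instead bounds $\|v_\tau\|_{L^\infty}$ via $e^{-\mt\mathcal{A}}$ smoothing and Lemma~\ref{lem:para_est_0}; these yield the same $\tau^{\textrm{positive power}}$ decay and both hinge on $r>d$. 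Your replacement error $\lesssim \tau^{1/2}+\tau^{\frac{r-d}{2r}}$ matches the paper's bound via Proposition~\ref{prop:scheme_ekeland_estimates} together with \eqref{EDI} and Cauchy--Schwarz. The only cosmetic difference is that you pass to the limit pointwise a.e.\ in $t$ while the paper tests against space-time test functions and integrates in $t$ before taking limits, but both are valid and interchangeable. No gap.
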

\begin{proof}
Let us define  $n=n_{\tau}:=\lfloor \frac{t}{\tau}\rfloor$ for a given time $t\in [0,T]$. Using the approximate Duhamel formula in Lemma \ref{lem:approximate_duhamel}, 
we see that for any smooth divergence free test function $f$ whose is contained in $\Omega$ we have 
\[
(v_{n+1,\tau},f)=(v_0, f_{n+1})+\tau\sum_{k=0}^n \Big(v_{k,\tau}, \frac{f_{n+1-k}\circ Z_{k+1,\tau}-f_{n+1-k}}{\tau}\Big).
\]
We can then write
\[
(v_{n+1,\tau},f)=(v_0, f_{n+1})+\tau\sum_{k=0}^n \Big(v_{k,\tau}\otimes v_{k,\tau}, Df_{n+1-k} \Big)+\epsilon_{\tau}
\]
where 
\[
\epsilon_{\tau}:= \tau\sum_{k=0}^n  \Big(v_{k,\tau}, \frac{f_{n+1-k}\circ Z_{k+1,\tau}-f_{n+1-k}}{\tau} -Df_{n+1-k} v_{k,\tau}).
\]
Next, observe 
\[
|\epsilon_{\tau}|\leq  \tau\sum_{k=0}^n  \int_{\Omega}|v_{k,\tau}(x)||Df_{n+1-k}(x)|| v_{k,\tau}(x)-\tilde{v}_{k+1,\tau}|\, dx+
\]
\[
\tau^2\sum_{k=0}^n\int_{\Omega} \int_0^1\int_0^t |D^2 f_{n+1-k}\big(sZ_{k+1,\tau}(x)+(1-s)x\big)||v_{k,\tau}(x)||\tilde{v}_{k+1,\tau}|^2\, ds\, dt\, dx. 
\]
Thanks to the Sobolev Embedding Theorem we have 
\begin{multline}\label{eq:epsilon_tau}
|\epsilon_{\tau}|\lesssim_{d,r}  \norm{D^2f_{n+1-k}}_{L^{r}(\Omega)}\norm{v_{\tau}}_{L^{2}([0,T]\times\Omega)} \tau\sum_{k=0}^n\norm{v_{k,\tau}-\tilde{v}_{k+1,\tau}}_{L^2(\Omega)}^2+\\
\tau\norm{D^3f_{n+1-k}}_{L^{r}(\Omega)}\norm{v_{\tau}}_{L^{\infty}([0,T]\times\Omega)}\norm{\tilde{v}_{\tau}}_{L^2([0,T]\times\Omega)}^2. 
\end{multline} 
Using  Proposition \ref{prop:scheme_ekeland_estimates} we have 
\[
\norm{v_{k,\tau}-\tilde{v}_{k+1,\tau}}_{L^2(\Omega)}\leq (\mt)^{1/2}\norm{Dv_{n,\tau}}_{L^2(\Omega)}+\mu^{-\frac{d+r}{2r}} \tau^{1-\frac{d+r}{2r}}\norm{v_{n,\tau}}_{L^r(\Omega)}^2.
\]
Combining the above bound with  \eqref{EDI}, we can conclude that 
\[
\tau\sum_{k=0}^n\norm{v_{k,\tau}-\tilde{v}_{k+1,\tau}}_{L^2(\Omega)}\leq (\tau T)^{1/2}\norm{v_0}_{L^2(\Omega)}+\mu^{-\frac{d+r}{2r}} \tau^{1-\frac{d+r}{2r}} T\norm{v_{\tau}}_{L^{\infty}([0,T];L^r(\Omega))}^2.
\]
Recall that $v_{k,\tau}:=e^{\mt A} w_{k,\tau}$ in (\ref{eq:ns_scheme_step_2}) .  Thus Sobolev inequalities and Lemma \ref{lem:para_est_0} yields 
\[
\norm{v_{\tau}}_{L^{\infty}([0,T]\times\Omega)}\lesssim_{d,r} \norm{D v_{\tau}}_{L^{\infty}([0,T];L^{r}(\Omega))}\lesssim_{d,r} (\mt)^{-\frac{d+r}{2r}}\max_{0\leq k\leq n} \norm{w_{k,\tau}}_{L^2(\Omega)}.
\]
Recalling our argument in the proof of  \eqref{EDI}, we have 
\[
\max_{0\leq k\leq n} \norm{w_{k,\tau}}_{L^2(\Omega)}\leq \max_{0\leq k\leq n} \norm{v_{k,\tau}}_{L^2(\Omega)}\leq \norm{v_0}_{L^2(\Omega)}.
\]
Plugging the above estimates into formula (\ref{eq:epsilon_tau}), we can now conclude that 
$\lim_{\tau\to 0} |\epsilon_{\tau}|=0$
for all divergence free $f\in W^{3,r}_0(\Omega)$.

By Proposition \ref{prop:precompactness}, along a subsequence $v_{\tau}$ converges to $v \in L^2([0,T]\times\Omega)$.  It is now clear that for any divergence free $f\in L^{\infty}([0,T];W^{3,r}_0(\Omega))$ we have
\[
\int_0^T\int_{\Omega} v(t,x)f(t,x)\, dt\, dx   =\lim_{j\to\infty} \int_0^T\int_{\Omega} v_{\tau_j}(t,x) f(t,x)\, dx\, dt
\]
\[
=\lim_{j\to\infty} \int_0^T v_0(x) e^{-\mu \tau_j\lfloor t/\tau_j\rfloor \mathcal{A}} f(t,x)\, dt+\int_0^T \int_0^t \int_{\Omega} v_{\tau_j}(s,x)\otimes v_{\tau_j}(s,x) D e^{-\mu \tau_j\lfloor \frac{t-s}{\tau_j}\rfloor \mathcal{A}}f(t,x) \, ds\, dx\, dt
\]
\[
=\int_0^T v_0(x) e^{-\mu t \mathcal{A}} f(t,x)\, dt+\int_0^T \int_0^t \int_{\Omega} v(s,x)\otimes v(s,x) D e^{-\mu(t-s) \mathcal{A}}f(t,x) \, ds\, dx\, dt.
\]

Since $v\in L^2([0,T];H^1(\Omega))\cap L^{\infty}([0,T];L^r(\Omega))$ we have
\[
\nabla \cdot (v\otimes v)\in L^1([0,T]\times\Omega).
\]
Hence, we can conclude that, for a.e. $(t,x)\in [0,T]\times \Omega$,
\[ 
v(t,x)= e^{-\mu t\mathcal{A}}v_0 (x)- \int_0^t e^{-\mu (t-s)\mathcal{A}} \nabla \cdot \big( v(s,x)\otimes v(s,x)\big)\, ds.
\]

It is a straightforward consequence of the estimates in Lemma \ref{lem:para_est_0} that any $L^{\infty}([0,T];L^r(\Omega))$ solution to (\ref{eq:duhamel_solution}) with $r>d$ must be unique (see Theorem~\ref{thm:F1}).  Thus,  the full sequence $\{v_{\tau}\}_{\tau>0}$ converges to $v$ as $\tau\to 0$.

\end{proof}

\begin{thm}\label{thm:conv_2}
 For $T^*$ and $v$  as in Theorem~\ref{thm:conv_2}, there is a unique $X:[0,T]\times\Omega\to\Omega$ such that
 \begin{equation}\label{ode_X}
 X(t,x)=\id+\int_0^t v(s,X(s,x))\, ds, \quad \det(DX(t,x))=1 \hbox{ a.e.} ,
 \end{equation}
 and 
 \[
 \lim_{\tau\to 0} \norm{X_{\tau}-X}_{L^1([0,T]\times\Omega)}=0.
 \]
\end{thm}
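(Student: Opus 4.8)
The plan is to first construct $X$ and establish its uniqueness by classical ODE theory, and then to obtain $X_\tau\to X$ by a discrete Gronwall argument that compares the iterates $X_{n,\tau}$ \emph{directly} with the flow of the limit velocity $v$, evaluated at the grid times $n\tau$.

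\textbf{Step 1: the limiting flow.} By Theorem~\ref{thm:conv_1} the limit $v$ is divergence free and lies in $L^\infty([0,T];L^r(\Omega))$, and (applying parabolic smoothing to the Duhamel identity \eqref{eq:duhamel_solution}, see Lemma~\ref{lem:para_est_0} and Remark~\ref{eq:last-remark}) also in $L^1((0,T];W^{1,\infty}_0(\Omega))$. Write $L:=\int_0^T\norm{Dv(s)}_{L^\infty(\Omega)}\,ds<\infty$ and $m:=\sup_{t\in[0,T]}\norm{v(t)}_{L^1(\Omega)}<\infty$. Carath\'eodory's theorem for an ODE with an $L^1$-in-time Lipschitz constant then gives, for every $x\in\bar\Omega$, a unique absolutely continuous curve $t\mapsto X(t,x)$ solving $\partial_tX=v(t,X)$, $X(0,x)=x$; since $v(t,\cdot)$ vanishes on $\partial\Omega$ the set $\bar\Omega$ is invariant and $\partial\Omega$ is pointwise fixed, and since $\nabla\cdot v(t,\cdot)=0$ a.e.\ Liouville's theorem yields $\det(DX(t,\cdot))=1$ a.e. This is the map of \eqref{ode_X}, and its uniqueness is precisely the uniqueness part of Cauchy--Lipschitz.

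\textbf{Step 2: convergence of the discrete velocity.} Recall $X_{n+1,\tau}=Z_{n+1,\tau}\circ X_{n,\tau}=(\id+\tau\tilde v_{n+1,\tau})\circ X_{n,\tau}$, $X_{0,\tau}=\id$, and that each $Z_{k,\tau}\in\mathcal{D}\textup{iff}_{\id}(\Omega)$, so every $X_{n,\tau}$ is measure preserving. I claim $\eta_\tau:=\int_0^{T}\norm{\tilde v_\tau(s,\cdot)-v(s,\cdot)}_{L^1(\Omega)}\,ds\to0$ as $\tau\to0$. On $[n\tau,(n+1)\tau)$ one has $\tilde v_\tau=\tilde v_{n+1,\tau}$ and $v_\tau(\cdot-\tau)=v_{n,\tau}$, and the second estimate of Proposition~\ref{prop:scheme_ekeland_estimates} combined with \eqref{EDI}, the $L^r$ bound of Proposition~\ref{prop:critical_time} and $r>d$ gives $\tau\sum_n\norm{\tilde v_{n+1,\tau}-v_{n,\tau}}_{L^2(\Omega)}^2\lesssim\tau\norm{v_0}_{L^2(\Omega)}^2+\tau^{2-\frac{d+r}{r}}\to0$; on the other hand $v_\tau(\cdot-\tau)\to v$ in $L^2([0,T]\times\Omega)$ by Theorem~\ref{thm:conv_1} and continuity of translations in $L^2$. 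Hence $\tilde v_\tau\to v$ in $L^2([0,T]\times\Omega)$ and, since $[0,T]\times\Omega$ has finite measure, in $L^1([0,T]\times\Omega)$, which is the claim.

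\textbf{Step 3: discrete Gronwall.} Set $e_n:=\norm{X_{n,\tau}-X(n\tau,\cdot)}_{L^1(\Omega)}$, so $e_0=0$, and $\ell_n:=\int_{n\tau}^{(n+1)\tau}\norm{Dv(s)}_{L^\infty(\Omega)}\,ds$, so $\sum_n\ell_n\le L$. Writing
\[
X_{n+1,\tau}-X((n+1)\tau,\cdot)=\big(X_{n,\tau}-X(n\tau,\cdot)\big)+\int_{n\tau}^{(n+1)\tau}\Big(\tilde v_{n+1,\tau}(X_{n,\tau})-v(s,X(s,\cdot))\Big)\,ds,
\]
I bound the integrand in $L^1(\Omega)$ by $\norm{\tilde v_{n+1,\tau}-v(s)}_{L^1(\Omega)}$ (using that $X_{n,\tau}$ is measure preserving to discard the composition) plus $\norm{Dv(s)}_{L^\infty(\Omega)}\big(e_n+\norm{X(n\tau,\cdot)-X(s,\cdot)}_{L^1(\Omega)}\big)$, together with $\norm{X(n\tau,\cdot)-X(s,\cdot)}_{L^1(\Omega)}\le m\tau$ (again by measure preservation of $X(r,\cdot)$). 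This produces the recursion
\[
e_{n+1}\le(1+\ell_n)e_n+m\tau\,\ell_n+\int_{n\tau}^{(n+1)\tau}\norm{\tilde v_{n+1,\tau}-v(s)}_{L^1(\Omega)}\,ds,
\]
and since $\prod_n(1+\ell_n)\le e^{L}$ the discrete Gronwall inequality gives $\max_{n\tau\le T}e_n\le e^{L}\big(mT\tau+\eta_\tau\big)\to0$. Finally, because $X_\tau$ is piecewise constant in $t$ and $\norm{X(s_1,\cdot)-X(s_2,\cdot)}_{L^1(\Omega)}\le m|s_1-s_2|$, we get $\int_0^{T}\norm{X_\tau(t,\cdot)-X(t,\cdot)}_{L^1(\Omega)}\,dt\le T\big(\max_{n\tau\le T}e_n+m\tau\big)\to0$, i.e.\ $X_\tau\to X$ in $L^1([0,T]\times\Omega)$.

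\textbf{Expected main obstacle.} The only genuinely delicate point is to carry out the comparison against the flow of $v$ directly, so that the Lipschitz constant entering the Gronwall loop is the time-integrable quantity $L$ attached to the \emph{limit}. Comparing $X_\tau$ instead with the exact flow of the piecewise-constant field $\tilde v_\tau$ would force one to control the forward-Euler consistency error $\sum_n\tau^2\norm{D\tilde v_{n+1,\tau}}_{L^\infty(\Omega)}\norm{\tilde v_{n+1,\tau}}_{L^\infty(\Omega)}$, whose best bound available from Section~\ref{sec:main_estimates} and parabolic smoothing only decays as $\tau\to0$ when $r>2d$; the direct comparison circumvents this. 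The remaining ingredient, $\tilde v_\tau\to v$ in $L^1([0,T]\times\Omega)$, is the routine combination of Proposition~\ref{prop:scheme_ekeland_estimates}, \eqref{EDI} and Theorem~\ref{thm:conv_1} recorded in Step~2.
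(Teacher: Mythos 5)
Your proof is correct and follows essentially the same overall strategy as the paper, with one genuine, if modest, structural improvement in the Gronwall step. Both arguments first obtain existence and uniqueness of $X$ from the regularity $v\in L^1\big([0,T];W^{1,\infty}_0(\Omega)\big)$ supplied by Theorem~\ref{thm:F1}, then close the convergence with a Gronwall loop driven by $\|Dv\|_{L^\infty}\in L^1(0,T)$. The paper rewrites the discrete iteration as $X_\tau(t,\cdot)=\id+\int_0^t v_\tau\big(s,X_\tau(s,\cdot)\big)\,ds+E_\tau$ with the correction term $E_\tau=\tau\sum_k\big(\tilde v_{k+1,\tau}(X_{k,\tau})-v_{k+1,\tau}(X_{k+1,\tau})\big)$ and runs a continuous-in-time Gronwall; after pushing forward by $X_{k,\tau}$, $E_\tau$ splits into the two pieces $\|v_{k+1,\tau}\circ Z_{k+1,\tau}-v_{k+1,\tau}\|_{L^1}$, estimated via Remark~\ref{rem:w1p_lip}, and $\|v_{k,\tau}-\tilde v_{k+1,\tau}\|_{L^1}$, estimated via Proposition~\ref{prop:scheme_ekeland_estimates} and~\eqref{EDI}. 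You instead keep $\tilde v_\tau$ as the discrete drift, compare $X_{n,\tau}$ with $X(n\tau,\cdot)$ at the grid points by a discrete Gronwall recursion, and collapse the entire source term to the single quantity $\eta_\tau=\|\tilde v_\tau-v\|_{L^1([0,T]\times\Omega)}$, which you estimate by the split $\big(\tilde v_\tau-v_\tau(\cdot-\tau)\big)+\big(v_\tau(\cdot-\tau)-v\big)$ using Proposition~\ref{prop:scheme_ekeland_estimates}, \eqref{EDI}, and Theorem~\ref{thm:conv_1}. This avoids the $v_{k+1,\tau}\circ Z_{k+1,\tau}-v_{k+1,\tau}$ piece and the appeal to Remark~\ref{rem:w1p_lip} altogether, which is a small but real simplification. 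Two minor points: (i) the discrete Gronwall actually gives $\max_n e_n\le e^L\big(m\tau L+\eta_\tau\big)$, not $e^L\big(mT\tau+\eta_\tau\big)$, since $\sum_n m\tau\ell_n\le m\tau L$ rather than $m\tau N$; this does not affect the conclusion. (ii) The ``expected main obstacle'' you describe, namely comparing to the exact flow of $\tilde v_\tau$, is not what the paper does; like you, the paper compares $X_\tau$ directly to the flow of the limit $v$, so the caution is well taken but does not distinguish your argument from the paper's.
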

\begin{proof}
Note that  any $v$, solution to equation (\ref{eq:duhamel_solution}) in $L^{\infty}([0,T];L^r(\Omega))\cap L^2([0,T];H^1(\Omega))$ is also in $L^1([0,T];W^{1,\infty}(\Omega))$ (see Theorem~\ref{thm:F1}).  Thus $X$ is well-defined by \eqref{ode_X}. 

It remains to show that $X_{\tau}$ converges to $X$. Recall that from (\ref{eq:ns_scheme_step_1}-\ref{eq:ns_scheme_step_3})
\[
X_{n+1,\tau}(x)=\id+\tau \sum_{k=0}^n \tilde{v}_{k+1,\tau}(X_{k,\tau}(x)).
\]
and by definition
\[
X_{\tau}(t,x)=\id+\int_0^t v_{\tau}(s, X_{\tau}(s,x))\, ds+\tau\sum_{k=0}^n \tilde{v}_{k+1,\tau}(X_{k,\tau}(x))-v_{k+1,\tau}(X_{k+1,\tau}(x)).
\]
Therefore, for any $t\in [0,T]$, we have the estimate
\[
\norm{X_{\tau}(t,\cdot)-X(t,\cdot)}_{L^1(\Omega)}\leq \norm{v-v_{\tau}}_{L^1([0,t]\times\Omega)}+\int_0^t \norm{Dv(s,\cdot)}_{L^{\infty}(\Omega)}\norm{X_{\tau}(s,\cdot)-X(s,\cdot)}_{L^1(\Omega)}\, ds+
\]
\[
\tau\sum_{k=0}^n \norm{v_{k+1,\tau}\circ Z_{k+1,\tau}-v_{k+1,\tau}}_{L^1(\Omega)}+\norm{v_{k,\tau}-\tilde{v}_{k+1,\tau}}_{L^1(\Omega)},
\]
where we have used the fact that $X_{k,\tau}$ is measure preserving for all $k$ and $X_{k+1,\tau}=Z_{k+1,\tau}\circ X_{k,\tau}$. 
Applying Remark \ref{rem:w1p_lip} and Proposition \ref{prop:scheme_ekeland_estimates}, we have 
\[
\tau\sum_{k=0}^n \norm{v_{k+1,\tau}\circ Z_{k+1,\tau}-v_{k+1,\tau}}_{L^1(\Omega)}+\norm{v_{k,\tau}-\tilde{v}_{k+1,\tau}}_{L^1(\Omega)}\leq 
\]
\[
\tau\norm{Dv_{\tau}}_{L^2([0,T]\times\Omega)}\norm{\tilde{v}_{\tau}}_{L^2([0,T]\times\Omega)}+ (t\mt)^{1/2}\norm{Dv_{\tau}}_{L^2([0,T]\times\Omega)}+\mu^{-\frac{d+r}{2r}} \tau^{1-\frac{d+r}{2r}}\norm{v_{\tau}}_{L^2([0,t];L^r(\Omega))}^2.
\]
Now we can use Gronwall's inequality to conclude that
\[
\norm{X_{\tau}(t,\cdot)-X(t,\cdot)}_{L^1(\Omega)}\leq \delta_{\tau}\exp(\norm{Dv}_{L^1([0,t];L^{\infty}(\Omega))}),\,\, \hbox { with } \lim_{\tau\to 0}\delta_{\tau}=0.
\]

\end{proof}

\begin{remark}\label{eq:last-remark}  Theorem~\ref{thm:F1} yields that $v\in L^1\big((0,T^*);W^{1,\infty}(\Omega)\big)$ with $v(\cdot,0)\in L^r(\Omega)$, which is enough to conclude that $X$ is unique and $X(t, \cdot)$ is one-to-one of $\bar \Omega$ onto itself. From here, one would use the standard theory for \eqref{eq:NSE} to improve the regularity properties of $v$  to $v \in L^1\big((0,T^*); C^{1,\alpha}(\Omega)\big)$ for some $\alpha>0$. Since $X$ satisfies \eqref{ode_X} with $v=0$ on $\partial\Omega$, we can conclude that $X \in L^{\infty}\big((0,T^*);\mathcal{D}\textup{iff}_{\id}(\Omega)\big)$. 
\end{remark}

\appendix

\section{Inequalities} 
The following Lemma is a classical result from the theory of maximal functions which can be found in \cite{DeVoreS}.  
\begin{lemma}\label{lem:w1p_lip}
For any $l\in [1,\infty]$ and $f\in W^{1,p}(\RR^d),$ 
\[
|f(x)-f(y)|\leq \Big(M (\nabla f)(x)+M (\nabla f)(y)\Big)|x-y|, \quad \text{for a.e.} \quad x,y\in\RR^d.
\]
Here, $M$ denotes the Hardy-Littlewood maximal function. Therefore, if $1<l\leq \infty$, there exists a constant $C\equiv C_l(d)$ such that $\|M (\nabla f)\|_{L^l(\Omega)} \leq C \|\nabla f\|_{L^l(\Omega)}.$
\end{lemma}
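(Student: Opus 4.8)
The assertion is classical, so the plan follows the standard route through the sub-representation of Sobolev functions over balls. First I would reduce to smooth $f$: if $f_k\to f$ in $W^{1,p}(\RR^d)$ with $f_k$ smooth (obtained by mollification), then $\nabla f_k\to\nabla f$ in $L^p$, so $M(\nabla f_k-\nabla f)\to 0$ in $L^p$ when $p>1$ (and in weak $L^1$ when $p=1$), hence along a subsequence $f_k\to f$ a.e.\ and, using $|M(\nabla f_k)-M(\nabla f)|\le M(\nabla f_k-\nabla f)$, also $M(\nabla f_k)\to M(\nabla f)$ a.e.; passing to the limit in the pointwise inequality for the $f_k$ then gives it for $f$ at a.e.\ pair $(x,y)$. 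For smooth $f$ the cornerstone is the following: for $z\neq x$, with $\omega=(z-x)/|z-x|$, one has $f(x)-f(z)=-\int_0^{|z-x|}\nabla f(x+\rho\omega)\cdot\omega\,d\rho$; averaging over $z\in B(x,r)$ and converting to polar coordinates produces the potential bound
\[
\Big|\,f(x)-\frac{1}{|B(x,r)|}\int_{B(x,r)}f\,\Big|\ \le\ c_d\int_{B(x,r)}\frac{|\nabla f(z)|}{|z-x|^{d-1}}\,dz .
\]

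Next I would estimate this Riesz-type integral by the maximal function via a dyadic decomposition of $B(x,r)$ into the annuli $A_j=\{2^{-j-1}r\le|z-x|<2^{-j}r\}$: on $A_j$ the kernel is comparable to $(2^{-j}r)^{1-d}$, while $\int_{A_j}|\nabla f|\le\int_{B(x,2^{-j}r)}|\nabla f|\le|B(x,2^{-j}r)|\,M(\nabla f)(x)$, so the $j$-th contribution is $\lesssim_d 2^{-j}r\,M(\nabla f)(x)$ and summing the geometric series gives $\big|f(x)-\tfrac{1}{|B(x,r)|}\int_{B(x,r)}f\big|\le C_d\,r\,M(\nabla f)(x)$. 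Then, with $r=|x-y|$ and the enveloping ball $B^{*}:=B(x,2r)\supset B(x,r)\cup B(y,r)$ (of volume comparable to both), the Poincar\'e inequality on $B^{*}$ yields
\[
\Big|\,\frac{1}{|B(x,r)|}\int_{B(x,r)}f-\frac{1}{|B^{*}|}\int_{B^{*}}f\,\Big|\ \le\ C_d\,r\,M(\nabla f)(x),
\]
and symmetrically with $y$ in place of $x$. Chaining the four estimates through the averages over $B(x,r)$, $B^{*}$ and $B(y,r)$ gives $|f(x)-f(y)|\le C_d\,|x-y|\big(M(\nabla f)(x)+M(\nabla f)(y)\big)$ at every pair of Lebesgue points, and the density step upgrades this to a.e.\ points. (The particular normalization of $M$ used in \cite{DeVoreS} is the one for which this constant becomes $1$; any other normalization merely changes $C_d$, which is immaterial for all uses of the lemma in this paper.)

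Finally, the $L^l$ bound for $M(\nabla f)$ is immediate: for $l=\infty$ it is $\|Mg\|_{L^{\infty}}\le\|g\|_{L^{\infty}}$ directly from the definition of averages, and for $1<l<\infty$ it is the Hardy--Littlewood--Wiener maximal theorem applied to $g=\nabla f$ on $\RR^d$, whose restriction from $\RR^d$ to $\Omega$ only decreases the left-hand side. I do not anticipate a genuine obstacle: the statement is classical. The two points requiring a little care are (i) the density/mollification argument together with the passage from Lebesgue points to a.e.\ points, where one must pass to a subsequence along which both $f_k$ and $M(\nabla f_k)$ converge a.e., and (ii) tracking the dimensional constant so as to match the normalization of \cite{DeVoreS} that yields the clean coefficient in the statement.
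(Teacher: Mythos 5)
The paper does not actually prove this lemma; it simply records it as a classical fact and cites DeVore--Sharpley, so there is no in-paper argument to compare against. Your proposal is a correct and complete proof along the standard textbook route: reduction to smooth $f$ by mollification (using $|Mg_1-Mg_2|\le M(g_1-g_2)$ and a.e.\ convergence along a subsequence), the polar-coordinate potential estimate $|f(x)-\fint_{B(x,r)}f|\lesssim\int_{B(x,r)}|\nabla f(z)|\,|z-x|^{1-d}\,dz$, the dyadic-annulus bound of that Riesz potential by $r\,M(\nabla f)(x)$, the chaining through the averages over $B(x,r)$, $B(x,2r)$, $B(y,r)$, and finally the Hardy--Littlewood--Wiener theorem for the $L^l$ bound. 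All steps are sound.

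Two cosmetic remarks. First, the argument yields a dimensional constant $C_d>1$ (roughly $2^d/d$ after the dyadic sum, and more after chaining), not the clean coefficient $1$ displayed in the lemma. You flag this and correctly observe that it is harmless: every use of the lemma in the paper (cf.\ Remark~\ref{rem:w1p_lip}) carries a generic constant $C$ anyway, so normalization is immaterial. Second, the final inequality as stated in the lemma, $\|M(\nabla f)\|_{L^l(\Omega)}\le C\|\nabla f\|_{L^l(\Omega)}$ for $f\in W^{1,p}(\RR^d)$, cannot be literally correct without further hypotheses, since $M(\nabla f)$ on $\Omega$ sees values of $\nabla f$ outside $\Omega$; what your argument (and the Hardy--Littlewood--Wiener theorem) actually gives is the bound with $L^l(\RR^d)$ on the right. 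This is a small imprecision in the lemma's statement rather than in your proof, and Remark~\ref{rem:w1p_lip} makes the intended usage clear: there $f\in W^{1,p}_0(\Omega)$ is extended by zero to $\RR^d$, so $\|\nabla\tilde f\|_{L^l(\RR^d)}=\|\nabla f\|_{L^l(\Omega)}$ and everything is consistent.
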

\begin{remark}\label{rem:w1p_lip} Assume that $\Omega$ is an open bounded set of class $C^3$. Let $l\in (1,\infty]$ and let $f\in W^{1,p}_0(\Omega).$ Denote as $\tilde f \in W^{1,p}(\RR^d)$ the extension of $f$ which is identically null outside $\Omega$. Let $g$ be the restriction of $M (\nabla \tilde f)$ to $\Omega.$   We have 
\[
 \|g\|_{L^l(\Omega)} \leq \|M (\nabla \tilde f)\|_{L^l(\RR^d)} \leq  \tilde C  \|\nabla \tilde f\|_{L^l(\RR^d)} = \tilde C  \|\nabla f\|_{L^l(\Omega)}
\]
\begin{enumerate} 
\item[(i)] We have 
\[
|f(x)-f(y)|\leq (g(x)+g(y))|x-y|, \quad \text{for a.e.} \quad x,y\in\Omega.
\]
\item[(ii)] Consequently, if $r>2$ and $Z: \Omega \rightarrow \Omega$ preserves Lebesgue measure then 
\[
\|f(Z)-f\|_{L^{r\over r-1}(\Omega)} \leq 2C \|Z-\id\|_{L^{r}(\Omega)}\|Df\|_{L^{r \over r-2}(\Omega)}
\]
\end{enumerate}
\end{remark}

\begin{lemma}\label{lem:discrete_nonlinear_gronwall}
Suppose that $g:[0,\infty)\to\RR$ is an increasing function and $\{a_k\}_{k\geq 0}$, $\{b_k\}_{k\geq 0}$, $\{c_k\}_{k\geq 0}$, and $\{\beta_{n,k}\}_{n,k\geq 0}$ are nonnegative sequences  such that
\[
a_{n+1}\leq c_{n+1} a_0+\sum_{k=0}^{n} \beta_{k,n}\, g(a_k)
\]
\[
b_{n+1}= b_0+\sum_{k=0}^{n} \beta_{k,n}\, g(b_k).
\]
If $c=\sup_{k\geq 0} c_k$ is finite and $\max(c,1) a_0\leq b_0$ then $a_n\leq b_n$ for all $n\geq 0$. 
 
\end{lemma}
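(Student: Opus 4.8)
The plan is to prove $a_n\le b_n$ for all $n\ge 0$ by a direct induction on $n$, using only the monotonicity of $g$, the nonnegativity of the coefficients $\beta_{k,n}$, and the two hypotheses $c=\sup_k c_k<\infty$ and $\max(c,1)\,a_0\le b_0$. No quantitative estimate or fixed-point argument is needed; the comparison is purely algebraic.

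For the base case $n=0$, since $a_0\ge 0$ and $\max(c,1)\ge 1$ we get $a_0\le \max(c,1)\,a_0\le b_0$. For the inductive step, assume $a_k\le b_k$ for every $k$ with $0\le k\le n$. Because $g$ is increasing this yields $g(a_k)\le g(b_k)$ for each such $k$, and since $\beta_{k,n}\ge 0$ the termwise inequality survives summation. Separately, $c_{n+1}\le c\le\max(c,1)$ together with $a_0\ge 0$ gives $c_{n+1}\,a_0\le\max(c,1)\,a_0\le b_0$. Combining these,
\[
a_{n+1}\le c_{n+1}\,a_0+\sum_{k=0}^{n}\beta_{k,n}\,g(a_k)\le b_0+\sum_{k=0}^{n}\beta_{k,n}\,g(b_k)=b_{n+1},
\]
which closes the induction.

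There is no genuine obstacle here; the only points requiring care are bookkeeping ones. First, one must invoke the inductive hypothesis at all earlier indices $0\le k\le n$, not merely at $k=n$, since the recursion for $a_{n+1}$ depends on the whole history $a_0,\dots,a_n$; this is what forces the induction to carry the statement ``$a_k\le b_k$ for all $k\le n$'' rather than a single inequality. Second, the inhomogeneous term $c_{n+1}\,a_0$ must be absorbed into $b_0$ uniformly in $n$, which is exactly the role played by the constant $c=\sup_k c_k$ and the assumption $\max(c,1)\,a_0\le b_0$; the factor $\max(c,1)$ (rather than $c$) is needed precisely so that the case $c<1$, and in particular the base case, is covered.
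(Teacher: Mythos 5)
Your proof is correct and is essentially the same strong induction the paper uses: both arguments reduce to $g(a_k)\le g(b_k)$ via monotonicity and absorb $c_{n+1}a_0$ into $b_0$ using $\max(c,1)a_0\le b_0$. The only difference is cosmetic — you spell out the base case and the role of $\max(c,1)$ more explicitly, while the paper writes the comparison as a single difference $a_{n+1}-b_{n+1}$.
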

\begin{proof}
By assumption $a_0\leq b_0$, hence it suffices to show that $a_k\leq b_k$ for all $k\leq n$ implies that $a_{n+1}\leq b_{n+1}$.  Using the formulas in the assumption of the Lemma, we have
\[
a_{n+1}-b_{n+1}\leq c_{n+1}a_0-b_0+\sum_{k=0}^{n} \beta_{k,n} \big(g(a_k)-g(b_k)\big).
\]
The result follows from the induction hypothesis, the monotonicity of $g$, and the nonnegativity of each sequence.  
\end{proof}

%
%
\section{Flows on $\mathcal D{\rm iff}_{\id}(\Omega)$}\label{sec:standard}

Here we provide a completely standard lemma guaranteeing the existence of certain paths in $\mathcal{D}\textup{iff}_{\id}(\Omega)$. 
\begin{lemma}\label{lem:aug20.4} For some $r\in (d,\infty)$ suppose that $u \in W^{2, r}(\Omega, \mathbb R^d)$ is a divergence free vector field, and let $\mathcal{M}=Z\in W^{2,r}_{\id}(\Omega)\cap \mathcal{D}\textup{iff}_{\id}(\Omega)$.  If $Z\in \mathcal{M}$,  there exists a flow $\gamma:\RR\to \mathcal{M}$ such that 
\[
\gamma(0)=Z, \quad \gamma'(0)=u\circ Z.
\]

\end{lemma}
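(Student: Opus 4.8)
The plan is to realize $\gamma$ as the flow of $u$ composed on the right with $Z$: set $\gamma(t):=\psi_t\circ Z$, where $\{\psi_t\}_{t\in\mathbb{R}}$ is the flow of the ODE $\dot x=u(x)$, and then verify the four required properties (global existence, $\gamma(t)\in\mathcal{M}$, $\gamma(0)=Z$, $\gamma'(0)=u\circ Z$). First I would dispatch the existence and diffeomorphism statements. Because $r>d$, the Sobolev embedding $W^{2,r}(\Omega)\hookrightarrow C^{1,\alpha}(\bar\Omega)$ with $\alpha=1-d/r$ shows $u$ is $C^1$ with bounded gradient; since $u$ vanishes on $\partial\Omega$, boundary points are equilibria and, by uniqueness for the ODE, no interior trajectory reaches $\partial\Omega$ in finite time, so $\bar\Omega$ is invariant and $\psi_t:\bar\Omega\to\bar\Omega$ is a well-defined $C^1$ diffeomorphism for every $t\in\mathbb{R}$ with $\psi_t|_{\partial\Omega}=\id$. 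As $\nabla\cdot u=0$, Liouville's theorem gives $\det D\psi_t\equiv 1$, so $\psi_t\in\mathcal{D}\textup{iff}_{\id}(\Omega)$.

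Next I would promote $\psi_t$ to $W^{2,r}$ and establish the $C^1$ dependence on $t$ in that topology. Differentiating the flow equation in $x$, the Jacobian $J_t:=D\psi_t$ solves the linear ODE $\dot J_t=Du(\psi_t)\,J_t$ with $J_0=I$, whence $\norm{J_t}_{L^\infty(\Omega)}\leq e^{|t|\,\norm{Du}_{L^\infty(\Omega)}}$; differentiating once more, $H_t:=D^2\psi_t$ solves $\dot H_t=D^2u(\psi_t)(J_t,J_t)+Du(\psi_t)\,H_t$ with $H_0=0$. Since $\psi_t$ is measure preserving, $\norm{D^2u(\psi_t)(J_t,J_t)}_{L^r(\Omega)}\leq\norm{J_t}_{L^\infty(\Omega)}^2\,\norm{D^2u}_{L^r(\Omega)}$, so Grönwall's inequality in $L^r(\Omega)$ gives a finite bound on $\norm{H_t}_{L^r(\Omega)}$, i.e.\ $\psi_t\in W^{2,r}_{\id}(\Omega)$ (the passage from the formal variational equations to genuine second derivatives being justified, as usual, by mollifying $u$ and passing to the limit in the uniform $W^{2,r}$ bounds). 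The same ODEs, combined with the continuity of $t\mapsto g\circ\psi_t$ in $L^r(\Omega)$ for fixed $g\in L^r(\Omega)$---which follows from the uniform convergence $\psi_t\to\psi_{t_0}$, the isometry $\norm{g\circ\psi_t}_{L^r(\Omega)}=\norm{g}_{L^r(\Omega)}$, and density of $C(\bar\Omega)$ in $L^r(\Omega)$---show that $t\mapsto\dot\psi_t=u\circ\psi_t$ is continuous into $W^{2,r}(\Omega)$, so $t\mapsto\psi_t$ is a $C^1$ curve in $W^{2,r}(\Omega)$.

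Finally I would compose on the right with $Z$. Since $Z$ is measure preserving and $DZ\in L^\infty(\Omega)$ (again by $r>d$), the composition map $T_Z:\phi\mapsto\phi\circ Z$ is linear and bounded from $W^{2,r}(\Omega)$ to itself: from $D^2(\phi\circ Z)=D^2\phi(Z)(DZ,DZ)+D\phi(Z)\,D^2Z$ one gets $\norm{D^2(\phi\circ Z)}_{L^r(\Omega)}\leq\norm{DZ}_{L^\infty(\Omega)}^2\norm{D^2\phi}_{L^r(\Omega)}+\norm{D\phi}_{L^\infty(\Omega)}\norm{D^2Z}_{L^r(\Omega)}\lesssim_Z\norm{\phi}_{W^{2,r}(\Omega)}$. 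Hence $\gamma(t):=T_Z\psi_t=\psi_t\circ Z$ is a $C^1$ curve in $W^{2,r}(\Omega)$ with $\gamma'(t)=\dot\psi_t\circ Z=(u\circ\psi_t)\circ Z$; since $\psi_t,Z\in\mathcal{D}\textup{iff}_{\id}(\Omega)$ their composition is again in $\mathcal{D}\textup{iff}_{\id}(\Omega)$, so $\gamma$ takes values in $\mathcal{M}=W^{2,r}_{\id}(\Omega)\cap\mathcal{D}\textup{iff}_{\id}(\Omega)$. Evaluating at $t=0$ gives $\gamma(0)=\psi_0\circ Z=Z$ and $\gamma'(0)=(u\circ\psi_0)\circ Z=u\circ Z$, as required.

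The only genuine work is in the second paragraph. The main obstacle I anticipate is not the existence of the flow but propagating $W^{2,r}$ regularity and, in particular, obtaining $C^1$ dependence on $t$ in the $W^{2,r}$ norm rather than in a weaker topology---this is what Lemma \ref{lem:abstract_slope_bound} needs, since it is applied with $\mathcal{X}=\mathcal{X}_a$. The two facts that make it go through are the measure-preserving property of $\psi_t$ (keeping composition isometric on the top-order $L^r$ term) and the $L^r$-continuity of composition; everything else is routine bookkeeping, which is why the lemma is labeled standard.
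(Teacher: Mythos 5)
Your proof is correct and follows essentially the same route as the paper's: construct the flow $\psi_t$ of $u$, propagate $W^{2,r}$ bounds via Gr\"onwall on $D\psi_t$ and $D^2\psi_t$ using the measure-preserving property, and set $\gamma(t)=\psi_t\circ Z$. You are somewhat more careful than the paper in two spots the paper leaves implicit---why trajectories stay in $\bar\Omega$, and why $t\mapsto\psi_t$ is genuinely $C^1$ into $W^{2,r}(\Omega)$ (needed since $\gamma'(0)$ must exist in the $\mathcal{X}_a$ norm)---and that extra care is warranted, not superfluous.
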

\begin{proof} 
Since $u$ is divergence free and Lipschitz, there exists a solution $h$ to the ODE
\[
h(0)=\id, \quad h'(0)=u.
\]
Gronwall's Lemma implies that
\[
\norm{Dh(t)}_{L^{\infty}(\Omega)}\leq \exp(t \norm{Du}_{L^{\infty}(\Omega)}).
\]
Since $u\in W^{2,r}_0(\Omega)$ is divergence free, it then follows that $h|_{\partial\Omega}(t)=\id$ and $\det(Dh(t))\equiv 1$.  Therefore $h\in \mathcal{D}\textup{iff}_{\id}(\Omega)$. 
We can then estimate
\[
\norm{D^2 h'(t)}_{L^r(\Omega)}\leq \norm{Du(h(t))}_{L^{\infty}(\Omega)}\norm{D^2h(t)}_{L^r(\Omega)}+\norm{D^2u(h(t))}_{L^r(\Omega)}\norm{Dh(t)}_{L^{\infty}(\Omega)}^2. 
\]
Using the fact that $h$ is measure preserving and the previous bound,
\[
\norm{D^2 h'(t)}_{L^r(\Omega)}\leq \norm{Du}_{L^{\infty}(\Omega)}\norm{D^2h(t)}_{L^r(\Omega)}+\norm{D^2u}_{L^r(\Omega)} \exp(2t \norm{Du}_{L^{\infty}(\Omega)}).
\]
Therefore, by Gronwall's Lemma, $h(t)\in W^{2,r}(\Omega)$ for all finite times and so $h:\RR\to\mathcal{M}$.    Now let us set $\gamma(t)=h(t)\circ Z$.  Since $\mathcal{M}$ is closed with respect to composition, we see that $\gamma:\RR\to\mathcal{M}$.  Finally, it is clear that $\gamma(0)=Z$ and $\gamma'(0)=u\circ Z$. 
  \end{proof}

 \section{Stokes operator estimates}\label{section:Stokes}
 
 We will use the following estimate on the Stokes resolvent problem.

\begin{lemma}\label{lem:para_est_0}
For any $q\in [1,\infty)$, $p\in [1,\infty]$, $t>0$ and any $f \in K$, we have
\begin{equation}
\norm{\nabla e^{-t \mathcal{A}}f}_{L^{p}(\Omega)}  \leq C_{q,d} t^{-\sigma}\norm{f}_{L^q(\Omega)} , \hbox{ where } \sigma= \frac{d}{2}\max(q^{-1}-p^{-1},0) + \frac{1}{2}.
\end{equation}

\end{lemma}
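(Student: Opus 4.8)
The statement is the standard $L^{q}$--$L^{p}$ smoothing estimate for the Stokes semigroup on a bounded $C^{1,1}$ domain, and the plan is to assemble it from classical facts about $\mathcal A=-\PP\Delta$. The core input, which I would quote from \cite{Giga} (see also \cite{RTemam}, \cite{Solo77}), is that for $1<q\le p<\infty$ and all $t>0$,
\[
\|e^{-t\mathcal A}f\|_{L^{p}(\Omega)}\lesssim_{q,p,d} t^{-\frac d2(1/q-1/p)}\|f\|_{L^{q}(\Omega)},\qquad
\|\nabla e^{-t\mathcal A}f\|_{L^{p}(\Omega)}\lesssim_{q,p,d} t^{-\frac d2(1/q-1/p)-\frac12}\|f\|_{L^{q}(\Omega)}.
\]
For completeness I would recall that these follow from three classical ingredients: (a) $-\mathcal A$ generates a bounded analytic semigroup on $L^{m}_{\sigma}(\Omega):=\overline{\{u\in C^{\infty}_{c}(\Omega;\RR^{d}):\nabla\cdot u=0\}}^{\,L^{m}}$ for $1<m<\infty$, with the spectrum of $\mathcal A$ lying in a half-plane $\{\re z\ge\delta_{m}>0\}$ (bounded domain), so that $\|(I+\mathcal A)^{\beta}e^{-t\mathcal A}\|_{L^{m}_{\sigma}\to L^{m}_{\sigma}}\lesssim_{m,\beta} t^{-\beta}$ for every $\beta\ge0$ (split $e^{-t\mathcal A}$ into $\lceil\beta\rceil$ factors, using the analytic-semigroup bound on each and the exponential decay for $t$ large); (b) $D(\mathcal A^{s/2})\hookrightarrow H^{s,m}(\Omega)$ (Bessel potential space) for $0\le s\le 2$ and $\|\nabla u\|_{L^{m}}\simeq_{m}\|\mathcal A^{1/2}u\|_{L^{m}}$; (c) the Sobolev embedding $H^{s,q}(\Omega)\hookrightarrow L^{p}(\Omega)$ when $\tfrac1p=\tfrac1q-\tfrac sd>0$. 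Indeed, writing $e^{-t\mathcal A}=e^{-t\mathcal A/2}e^{-t\mathcal A/2}$ and taking $s:=d(\tfrac1q-\tfrac1p)$,
\[
\|e^{-t\mathcal A}f\|_{L^{p}}\lesssim\|(I+\mathcal A)^{s/2}e^{-t\mathcal A/2}\bigl(e^{-t\mathcal A/2}f\bigr)\|_{L^{q}}\lesssim t^{-s/2}\|f\|_{L^{q}},
\]
and the same splitting with $\mathcal A^{1/2}$ (using $\|\mathcal A^{1/2}e^{-t\mathcal A/2}\|_{q\to q}\lesssim t^{-1/2}$) gives the gradient bound; chaining the two yields the exponent $\sigma=\tfrac d2(\tfrac1q-\tfrac1p)+\tfrac12$ of the lemma for $1<q\le p<\infty$.

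Next I would extend to the full range of $(p,q)$ in the statement. If $q\ge p$, then on the bounded domain $\|f\|_{L^{p}}\le|\Omega|^{(1/p-1/q)_{+}}\|f\|_{L^{q}}$, so the estimate reduces to the case $q=p\in(1,\infty)$ just proved, where $\sigma=\tfrac12$. If $p=\infty$, fix any $\tilde p\in(\max(d,q),\infty)$ and set $\theta:=\tfrac12+\tfrac d{2\tilde p}\in(\tfrac12,1)$, so that the Gagliardo-Nirenberg inequality gives $\|\nabla w\|_{L^{\infty}(\Omega)}\lesssim\|D^{2}w\|_{L^{\tilde p}(\Omega)}^{\theta}\|w\|_{L^{\tilde p}(\Omega)}^{1-\theta}$; I apply this to $w:=e^{-t\mathcal A}f=e^{-2t\mathcal A/3}g$ with $g:=e^{-t\mathcal A/3}f$. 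By Lemma~\ref{stokes} (with $a=1$) and ingredient (a), $\|D^{2}w\|_{L^{\tilde p}}\lesssim\|(I+\mathcal A)e^{-2t\mathcal A/3}g\|_{L^{\tilde p}}\lesssim t^{-1}\|g\|_{L^{\tilde p}}$ and $\|w\|_{L^{\tilde p}}\lesssim\|g\|_{L^{\tilde p}}$, while the main-range smoothing gives $\|g\|_{L^{\tilde p}}\lesssim t^{-\frac d2(1/q-1/\tilde p)}\|f\|_{L^{q}}$; hence $\|\nabla e^{-t\mathcal A}f\|_{L^{\infty}}\lesssim t^{-\theta-\frac d2(1/q-1/\tilde p)}\|f\|_{L^{q}}$, and $\theta+\tfrac d2(\tfrac1q-\tfrac1{\tilde p})=\tfrac12+\tfrac d{2q}=\sigma$, independently of $\tilde p$. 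The remaining corner $q=1$ is handled by a duality argument exploiting the consistency and self-adjointness of $e^{-t\mathcal A}$ on the $L^{m}$ scale together with the $p=\infty$ estimate just obtained, and by the bounded-domain $L^{1}$-theory of the Stokes operator in the case $q=p=1$; throughout it suffices to argue for $f\in K$ (smooth, hence in every $L^{m}_{\sigma}$) and pass to the stated class by density.

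The part I expect to carry the real content, rather than bookkeeping, is precisely ingredients (a) and (b): the analyticity of the Stokes semigroup on $L^{m}_{\sigma}(\Omega)$ and the identification of the domains of its fractional powers with Bessel potential spaces (equivalently, $\|\nabla u\|_{L^{m}}\simeq\|\mathcal A^{1/2}u\|_{L^{m}}$). These are deep but classical; once they are invoked, the lemma is an exercise in the semigroup-splitting trick, Sobolev embedding, one line of the Gagliardo-Nirenberg inequality together with Lemma~\ref{stokes} for the $L^{\infty}$ endpoint, and Hölder on the bounded domain for the $q\ge p$ range. The only other point requiring a little care is keeping the powers of $t$ sharp across these endpoints, which is why the intermediate exponent $\tilde p$ is inserted in the $p=\infty$ case.
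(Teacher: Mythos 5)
Your proof is correct, but it takes a noticeably different route from the paper's. The paper's proof is surprisingly economical: it combines the elliptic estimate $\|D^2u\|_q \lesssim \|\mathcal A u\|_q$ (\cite{RTemam}), the resolvent smoothing $\|\mathcal A e^{-t\mathcal A}f\|_q \lesssim t^{-1}\|f\|_q$ (\cite{GigaM}), and the $L^q$-boundedness of $e^{-t\mathcal A}$ to get $\|D^2 u\|_q\lesssim t^{-1}\|f\|_q$ and $\|u\|_q\lesssim \|f\|_q$ for $u=e^{-t\mathcal A}f$, then hits the target $\|Du\|_{L^p}$ with a \emph{single} Gagliardo--Nirenberg interpolation between $D^2u$ and $u$ (both measured in $L^q$), with sharp exponent $\alpha=\sigma$. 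In particular, the full $L^q$--$L^p$ semigroup smoothing is never used — GNS does all the exponent transfer. Your proof instead first establishes the gradient $L^q$--$L^p$ smoothing directly via analyticity of the Stokes semigroup and the identification of $D(\mathcal A^{1/2})$, reserving GNS only for the $p=\infty$ endpoint (where your splitting and your choice of intermediate $\tilde p$ agree exactly with the logic of the paper's argument specialized to $p=\infty$), and handles $p\le q$ and $q=1$ by Hölder and duality. Your version thus covers the full range $1/q-1/p>1/d$, which the paper's single GNS step (which requires $\alpha\in[1/2,1]$) does not reach; the paper implicitly restricts to the range used in its applications, where $1/q-1/p\le 1/d$. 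The trade-off is that you invoke heavier machinery (fractional powers, $\|\nabla u\|_{L^m}\simeq\|\mathcal A^{1/2}u\|_{L^m}$, Bessel-potential embeddings) where the paper only needs Temam's $W^{2,q}$ elliptic estimate for $\mathcal A$. The $q=1$ duality step is stated rather than carried out, but the idea is sound.
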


\begin{proof}
We have the following estimates for any $1\leq l\leq n \leq \infty$. For $u$ sufficiently smooth, 
\begin{equation}\label{first}
\|D^2u\|_l \leq C \|\mathcal{A}u\|_l; \qquad \hbox{ (\cite{RTemam}, Chapter 1.2, Prop 2.2)} 
\end{equation}
\begin{equation}\label{third}
\|\mathcal{A}e^{-t \mathcal{A}} f\|_l\leq {C_l \over t}\|f\|_l, \quad \forall f \in K;\qquad \hbox{(\cite{GigaM}, Prop 1.2)}
\end{equation}
\begin{equation}\label{second} 
\|e^{-t \mathcal{A}} f\|_{L^{n}(\Omega)} \leq {C\over t^k}\|f\|_{L^{l}(\Omega)}, \quad \forall f \in K, \qquad \hbox{where} \quad k= \Big(l^{-1}- n^{-1}) \frac{d}{2}.  \qquad \hbox{(\cite{Giga86}, (A))}
\end{equation}
To prove the lemma, we choose $\alpha$ in the GNS inequality 
$$
\norm{Du}_{L^p(\Omega)}\leq C\norm{D^2u}^{\alpha}_{L^q(\Omega)} \norm{u}_{L^{q}(\Omega)}^{1-\alpha},
$$
so that
\begin{equation}\label{GNS}
\frac{1}{p} \geq \frac{1}{d} + \Big(\frac{1}{q}-{2\over d}\Big)\alpha + {(1-\alpha) \over q} \quad  \text{and} \quad \alpha \in [1/2, 1].
\end{equation}
Hence, the smallest choice we can make for $\alpha$ is $\alpha=\frac{1}{2}+\frac{d}{2}\max(q^{-1}-p^{-1},0)$.

Let $u := e^{tA}f$. From \eqref{first}-\eqref{third} we have
$$
\|D^2u\|_q \leq \|Au\|_q \leq  {1 \over t} \|f\|_q,
$$
and from \eqref{second}
$$
\|u\|_{q} \leq C \|f\|_q .
$$
 Hence  we have
 $$
 \|Du\|_\gamma \leq  {C \over t^{\alpha}} \|f\|_q.
 $$
Using our choice of $\alpha$ we conclude.
\end{proof}

 \section{Navier-Stokes basic estimates}

The following theorem is expected to be classical, however we were not able to locate an explicit reference. Thus we provide a proof for the completeness.

\begin{thm} \label{thm:F1} For any $v_0\in L^r(\Omega)$ which is divergence free, there is at most one  $v\in L^{\infty}([0,T];L^r(\Omega))$ that satisfies the Duhamel's formula
\begin{equation}\label{eq:true_duhamel}
v(t)=e^{-\mu t \mathcal A} v_0-\int_0^t e^{-\mu(t-s)  \mathcal A} \PP D^T(v(s)\otimes v(s))\, ds.
\end{equation}
Moreover $ t^{d+r \over 2r} v(x,t) \in L^{\infty}((0,T];W^{1,\infty}(\Omega))$. In particular $v\in L^1((0,T]; W^{1,\infty}(\Omega))$. 
\end{thm}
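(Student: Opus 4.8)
The plan is to prove the two assertions separately: uniqueness in $L^{\infty}([0,T];L^r(\Omega))$ by a weakly singular Gronwall argument, and the smoothing bound $t^{\frac{d+r}{2r}}v\in L^{\infty}((0,T];W^{1,\infty}(\Omega))$ by a finite bootstrap resting on the estimates of Lemma~\ref{lem:para_est_0}, the heat–kernel type bound \eqref{second}, and the $L^p$ boundedness of $\PP$ from \eqref{eq:april26.2020.1}. The one auxiliary fact I would record first is the dual form of Lemma~\ref{lem:para_est_0}: since $\mathcal{A}$ and $\PP$ are self-adjoint and commute, duality (testing against $\phi$ and moving the derivative) gives $\norm{e^{-t\mathcal{A}}\PP D^T g}_{L^p(\Omega)}\lesssim t^{-\frac12-\frac d2(q^{-1}-p^{-1})_+}\norm{g}_{L^q(\Omega)}$ for $1<q\le p\le\infty$ (and in fact for $q\le p$ without the positive part, reading $(\cdot)_+$ literally). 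This is the only mapping property of the Stokes semigroup that I will use.

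For uniqueness, let $v_1,v_2\in L^{\infty}([0,T];L^r(\Omega))$ both satisfy \eqref{eq:true_duhamel}, put $w=v_1-v_2$ and $M:=\norm{v_1}_{L^{\infty}([0,T];L^r)}+\norm{v_2}_{L^{\infty}([0,T];L^r)}$. Subtracting the two Duhamel formulas and using the bilinear identity $v_1\otimes v_1-v_2\otimes v_2=w\otimes v_1+v_2\otimes w$ together with H\"older's inequality ($\norm{w\otimes v_i}_{L^{r/2}}\le\norm{w}_{L^r}\norm{v_i}_{L^r}$), the dual estimate above with $q=r/2$, $p=r$ yields, with $\alpha:=\tfrac12+\tfrac{d}{2r}=\tfrac{r+d}{2r}$,
\[
\norm{w(t)}_{L^r(\Omega)}\ \lesssim\ \mu^{-\alpha}M\int_0^t (t-s)^{-\alpha}\,\norm{w(s)}_{L^r(\Omega)}\,ds .
\]
Since $r>d$ we have $\alpha<1$, so the kernel is integrable; iterating the inequality $n$ times produces a prefactor comparable to $C^n t^{n(1-\alpha)}/\Gamma\!\big(n(1-\alpha)+1\big)$ in front of $\sup_{[0,T]}\norm{w}_{L^r}$, which tends to $0$ as $n\to\infty$. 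Hence $w\equiv 0$ on $[0,T]$.

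For the regularity, I would first bootstrap the spatial integrability. Applying \eqref{eq:true_duhamel}, the linear part satisfies $\norm{e^{-\mu t\mathcal{A}}v_0}_{L^{p}(\Omega)}\lesssim(\mu t)^{-\frac d2(r^{-1}-p^{-1})}\norm{v_0}_{L^r}$ by \eqref{second}; for the nonlinear part $v\otimes v\in L^{r/2}(\Omega)$ uniformly in $s$, and the dual estimate gives the kernel $(\mu(t-s))^{-\frac12-\frac d2(\frac2r-\frac1p)}$, whose exponent is $<1$ exactly when $p^{-1}>\tfrac2r-\tfrac1d$; because $r>d$ one can choose such a $p$ with $p>r$, and more precisely $p^{-1}$ can be lowered by an amount approaching the fixed positive number $\tfrac1d-\tfrac1r$. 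Iterating (at step $k$ one has $v(t)\in L^{p_k}(\Omega)$ with blow-up $t^{-\frac d2(r^{-1}-p_k^{-1})}$, writes $v\otimes v\in L^{q_k}$ with $q_k^{-1}=p_k^{-1}+r^{-1}$ by H\"older, and repeats) reaches $v(t)\in L^{\infty}(\Omega)$ in finitely many steps, with $\norm{v(t)}_{L^{\infty}(\Omega)}\lesssim t^{-\frac d{2r}}$ on $(0,T]$. The main obstacle is precisely this step in the regime $d<r<2d$: there $r/2<d$, so a single application of $e^{-\tau\mathcal{A}}\PP D^T$ from $L^{r/2}$ to $L^{\infty}$ yields the non-integrable kernel $\tau^{-1/2-d/r}$ ($d/r>1/2$); what rescues the argument, and what must be stated carefully, is that each bootstrap step nonetheless gains the fixed amount $\tfrac1d-\tfrac1r$ of integrability, so only finitely many are needed — and this is exactly where $r>d$ is used.

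Finally I would upgrade the gradient in two stages. With $v(s)\in L^{\infty}(\Omega)\cap L^r(\Omega)$, interpolation gives $\norm{v(s)\otimes v(s)}_{L^q(\Omega)}\lesssim s^{-\frac dr+\frac d{2q}}$ for every $q\in[r/2,\infty)$. A first differentiated Duhamel estimate with $q$ slightly above $r$ gives $\norm{\nabla v(t)}_{L^r(\Omega)}\lesssim(\mu t)^{-1/2}\norm{v_0}_{L^r}+\int_0^t (t-s)^{-1-\frac d2(q^{-1}-r^{-1})}s^{-\frac dr+\frac d{2q}}\,ds$, where both singularities are integrable (using $q>r$ and $d/r<1$) and the Beta–type integral is of order $t^{-d/(2r)}$, so $\norm{\nabla v(t)}_{L^r(\Omega)}\lesssim t^{-1/2}$. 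Then $v\otimes v\in W^{1,r}(\Omega)$ with $\norm{\nabla(v\otimes v)(s)}_{L^r(\Omega)}\lesssim\norm{v(s)}_{L^{\infty}}\norm{\nabla v(s)}_{L^r}\lesssim s^{-\frac{d+r}{2r}}$, and a second differentiated Duhamel estimate gives
\[
\norm{\nabla v(t)}_{L^{\infty}(\Omega)}\ \lesssim\ (\mu t)^{-\frac{d+r}{2r}}\norm{v_0}_{L^r}+\int_0^t (t-s)^{-\frac{d+r}{2r}}\,s^{-\frac{d+r}{2r}}\,ds ,
\]
the factor $(t-s)^{-1/2-\frac d{2r}}=(t-s)^{-\frac{d+r}{2r}}$ coming from one derivative landing on the semigroup mapping $L^r\to L^{\infty}$. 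Since $r>d$, $\tfrac{d+r}{2r}<1$: the integral is finite, of order $t^{-d/r}$, and is dominated near $t=0$ by the linear term $t^{-\frac{d+r}{2r}}$ (as $\tfrac{d+r}{2r}>\tfrac dr$). Combining with $\norm{v(t)}_{L^{\infty}(\Omega)}\lesssim t^{-\frac d{2r}}$, so that $t^{\frac{d+r}{2r}}\norm{v(t)}_{L^{\infty}}\lesssim t^{1/2}$, we conclude $t^{\frac{d+r}{2r}}v\in L^{\infty}((0,T];W^{1,\infty}(\Omega))$; and since $\tfrac{d+r}{2r}<1$ the weight is integrable on $(0,T]$, giving $v\in L^1((0,T];W^{1,\infty}(\Omega))$.
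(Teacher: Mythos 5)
Your overall strategy is close to the paper's: both first establish $t^{\frac{d}{2r}}\norm{v(t)}_{L^\infty}\lesssim 1$, then $t^{1/2}\norm{Dv(t)}_{L^r}\lesssim 1$, then the $W^{1,\infty}$ bound, and then use the $L^\infty$ decay to get uniqueness. Two of your steps are genuine variants: for the $L^\infty$ bound you do a finite bootstrap of spatial integrability, gaining a fixed amount $\tfrac1d-\tfrac1r>0$ per step, whereas the paper sets up a self-referential inequality for $\norm{v(t)\sigma_{d/(2r)}(t)}_{L^\infty}$ and closes it with a singular Gronwall; your version avoids having to assume the $L^\infty$ norm is finite to begin with. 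For uniqueness you run the weakly singular Gronwall directly in $L^\infty_tL^r_x$ (iterating, Gamma-function factorial decay), whereas the paper uses the already-proved $L^\infty$-in-space bound and then a differential Gronwall on $\int\norm{v-\tilde v}^3_{L^1}$; both are fine, and yours has the advantage of not depending on the regularity assertion.

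However, the step from $L^\infty$ decay to $t^{1/2}\norm{\nabla v(t)}_{L^r}\lesssim 1$ contains a genuine gap. You estimate the nonlinear term using an operator bound of the form
\[
\norm{\nabla e^{-\tau\mathcal A}\PP D^T g}_{L^r(\Omega)}\lesssim \tau^{-1-\frac d2(q^{-1}-r^{-1})}\norm{g}_{L^q(\Omega)},
\]
with $q$ slightly above $r$, so that the exponent is $-1+\varepsilon$ and hence integrable. But this bound is not true for $q>r$. The two derivatives $\nabla$ and $D^T$ each cost $\tau^{-1/2}$ regardless of exponents, and the extra decay $\tau^{-\frac d2(q^{-1}-p^{-1})_+}$ vanishes when $q\ge p$: on a bounded domain the best you can say for $q\ge r$ is $L^q\hookrightarrow L^r$ and then $\norm{\nabla e^{-\tau\mathcal A}\PP D^T g}_{L^r}\lesssim\tau^{-1}\norm{g}_{L^q}$, and $(t-s)^{-1}$ is not integrable. (On $\RR^d$ there is simply no $L^q\to L^p$ bound for $q>p$ by Young's inequality.) Your own statement of the dual estimate correctly carries the $(\cdot)_+$, and your parenthetical about dropping it is only for $q\le p$; you then apply it with $q>p$ anyway. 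There is no choice of intermediate exponents that makes $\int_0^t\norm{\nabla e^{-\mu(t-s)\mathcal A}\PP D^T(v\otimes v)(s)}_{L^r}\,ds$ converge if the only spatial information on the integrand is $\norm{v\otimes v(s)}_{L^q}$: two derivatives always cost a full power of $t-s$.

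The repair is the one the paper uses: since $\nabla\cdot v=0$, write $\PP D^T(v\otimes v)=\PP(v\cdot\nabla v)$, bound $\norm{v\cdot\nabla v(s)}_{L^r}\le \norm{v(s)}_{L^\infty}\norm{\nabla v(s)}_{L^r}$, and apply only $\norm{\nabla e^{-\mu(t-s)\mathcal A}\PP\,\cdot\,}_{L^r}\lesssim (t-s)^{-1/2}\norm{\cdot}_{L^r}$. This gives
\[
\norm{\nabla v(t)}_{L^r}\lesssim t^{-1/2}+\int_0^t (t-s)^{-1/2}\,s^{-\frac{d}{2r}}\norm{v(s)\sigma_{d/(2r)}(s)}_{L^\infty}\norm{\nabla v(s)}_{L^r}\,ds,
\]
a genuinely self-referential (Gronwall) inequality with a doubly singular but integrable kernel, which one closes with the same $L^\beta$ trick as in the $L^\infty$ step. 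Once $t^{1/2}\norm{\nabla v(t)}_{L^r}\lesssim 1$ is established this way, your final step — putting $D^T$ on $v\otimes v$, getting $\norm{v\cdot\nabla v(s)}_{L^r}\lesssim s^{-\frac{d+r}{2r}}$, and using $\nabla e^{-\tau\mathcal A}\PP:L^r\to L^\infty$ at rate $\tau^{-\frac{d+r}{2r}}$ — is correct as written and gives $t^{\frac{d+r}{2r}}\nabla v\in L^\infty((0,T];L^\infty)$, and the $L^1$-in-time conclusion follows from $\frac{d+r}{2r}<1$.
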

\begin{proof}
Let us denote $\sigma_{\alpha}(t):=t^{\alpha}$ and $A_v:= \norm{v}_{L^{\infty}([0,T];L^r(\Omega))}<\infty$. We begin by showing that $\sigma_{\frac{d}{2r}} v\in L^{\infty}((0,T]\times\Omega)$. Note that if $\phi$ is a smooth divergence free vector field that vanishes on the boundary of $\Omega$ then 
\[
\Big(e^{-\mu(t-s)  \mathcal A}\PP D^T(v(s)\otimes v(s)), \phi \Big)=- \Big(v(s)\otimes v(s),D\big(e^{-\mu(t-s)  \mathcal A} \phi\big) \Big).
\]
We use Lemma \ref{lem:para_est_0} to conclude that 
\[
\Big| \Big(e^{-\mu(t-s)  \mathcal A}\PP D^T(v(s)\otimes v(s)), \phi \Big)\Big| \lesssim_r \norm{v(s)\otimes v(s)}_{L^r(\Omega)} \big(\mu(t-s))^{-{d+r\over 2r}}\norm{\phi}_{L^1(\Omega)}.
\]
Using the Duhamel formula (\ref{eq:true_duhamel}) and \eqref{second}, we can conclude that 
$$
\begin{array}{lll}
\norm{v(t) \,\sigma_{ \frac{d}{2r}}(t))}_{L^{\infty}(\Omega)}&\lesssim& \norm{v_0}_{L^r(\Omega)}+t^{\frac{d}{2r}}\int_{0}^t (t-s)^{-\frac{d+r}{2r}}\norm{v(s) \otimes v(s)}_{L^{r}(\Omega)} ds\\ \\
&\lesssim & \norm{v_0}_{L^r(\Omega)}+t^{\frac{d}{2r}}\int_{0}^t (t-s)^{-\frac{d+r}{2r}}\norm{v(s)}_{L^{\infty}(\Omega)}\norm{v(s)}_{L^r(\Omega)}\, ds. \\ \\
&\lesssim& \norm{v_0}_{L^r(\Omega)}+t^{\frac{d}{2r}}A_v\int_{0}^t s^{-\frac{d}{2r}} (t-s)^{-\frac{d+r}{2r}}\norm{v(s) \,\sigma_{\frac{d}{2r}}(s)}_{L^{\infty}(\Omega)}\, ds.
\end{array}
$$
Since 
$$s^{-\frac{d}{2r}} (t-s)^{-\frac{d+r}{2r}} \lesssim t^{-\frac{d+r}{2r}} s^{-\frac{d}{2r}}\chi_{0\leq s\leq t/2} +t^{-\frac{d}{2r}} (t-s)^{-\frac{d+r}{2r}}\chi_{t/2 < s\leq 1}
$$
we have 
\begin{align*}
\norm{v(t)\,\sigma_{\frac{d}{2r}}(t)}_{L^{\infty}(\Omega)}\lesssim & \norm{v_0}_{L^r(\Omega)}\\
+ &A_v\bigg( t^{-\frac{1}{2}}\int_{0}^{t/2} s^{-\frac{d}{2r}} \norm{v(s)\, \sigma_{\frac{d}{2r}}(s)}_{L^{\infty}(\Omega)} ds+\int_{t/2}^t (t-s)^{-\frac{d+r}{2r}}\norm{v(s)\,\sigma_{\frac{d}{2r}}(s)}_{L^{\infty}(\Omega)} ds   \bigg).
\end{align*}
Choose some $\beta<\infty$ such that $\frac{\beta(d+r)}{2r(\beta-1)}<1$, this is possible since $r>d$.  Using H\"{o}lder's inequality, we obtain the estimate
\begin{equation}\label{eq:duhamel_lp_control_3}
\norm{v(t)\, \sigma_{\frac{d}{2r}}(t)}_{L^{\infty}(\Omega)}\lesssim \norm{v_0}_{L^r(\Omega)}+
 t^{\frac{\beta-1}{\beta}-\frac{d+r}{2r}}A_v h(t)^{\beta^{-1}},
\end{equation}
where
\[
h(t):=\int_0^t  \norm{v(s)\, \sigma_{d \over 2r}(s) }^\beta_{L^\infty(\Omega)}. 
\]
Raising both sides to the power $\beta$, and integrating over $[0,t_0]$ with $t_0<T$, we have 
\[
h(t_0)\lesssim \norm{v_0}_{L^r(\Omega)}^{\beta}+
 A_v^{\beta}\int_0^{t_0} t^{\beta-1-\frac{\beta(d+r)}{2r}} h(t)\, dt.
\]
Thus, Gronwall's inequality implies that $h$ stays finite in $[0,T]$.  Plugging this result back into (\ref{eq:duhamel_lp_control_3}) and recalling that $ \frac{\beta-1}{\beta}-\frac{d+r}{2r}>0$, we conclude that $\sigma_{\frac{d}{2r}}v\in L^{\infty}((0,T]\times\Omega)$ as desired. 


Next we will use the above $L^{\infty}$ bound to show that $\sigma_{\frac{1}{2}}v\in L^{\infty}((0,T];W^{1,r}_0(\Omega))$.
Once again, using the Duhamel formula (\ref{eq:true_duhamel}) and Lemma \ref{lem:para_est_0}, we can conclude that 
$$
\begin{array}{lll}
\norm{\sigma_{\frac{1}{2}}(t)Dv(t)}_{L^{r}(\Omega)}&\lesssim&
 \norm{v_0}_{L^r(\Omega)}+t^{\frac{1}{2}}\int_{0}^t (t-s)^{-\frac{1}{2}}\norm{v(s)}_{L^{\infty}(\Omega)}\norm{Dv(s)}_{L^{r}(\Omega)}\, ds.\\ \\
&\lesssim&
 \norm{v_0}_{L^r(\Omega)}+t^{\frac{1}{2}}\norm{v\, \sigma_{\frac{d}{2r}}}_{L^{\infty}((0,T]\times\Omega)}\int_{0}^t s^{-\frac{d+r}{2r}} (t-s)^{-\frac{1}{2}}\norm{\sigma_{\frac{1}{2}}(s)Dv(s)}_{L^{r}(\Omega)}\, ds.
\end{array}
$$
We can argue as above with the same choice of $\beta$ to conclude that $\sigma_{\frac{1}{2}}v\in L^{\infty}((0,T];W^{1,r}_0(\Omega))$.

Now we show  $\sigma_{\frac{d+r}{2r}}v\in L^{\infty}((0,T];W^{1,\infty}_0(\Omega))$.   Again by the Duhamel formula (\ref{eq:true_duhamel}) and Lemma \ref{lem:para_est_0}, we have the estimate 
\[
\norm{\sigma_{\frac{d+r}{2r}}(t)Dv(t)}_{L^{\infty}(\Omega)}\lesssim\\
 \norm{v_0}_{L^r(\Omega)}+t^{\frac{d+r}{2r}}\int_{0}^t (t-s)^{-\frac{d+r}{2r}}\norm{v(s)}_{L^{\infty}(\Omega)}\norm{Dv(s)}_{L^{r}(\Omega)}\, ds.
\]
Using our previous work, we get the bound
\begin{align*}
\norm{\sigma_{\frac{d+r}{2r}}(t)Dv(t)}_{L^{\infty}(\Omega)}\lesssim & \norm{v_0}_{L^r(\Omega)}\\
 +& t^{\frac{d+r}{2r}}\norm{v\, \sigma_{\frac{d}{2r}}}_{L^{\infty}((0,T]\times\Omega)}\norm{\sigma_{\frac{1}{2}}Dv}_{L^{\infty}((0,T];L^r(\Omega))} \int_{0}^t s^{-\frac{d+r}{2r}} (t-s)^{-\frac{d+r}{2r}}\, ds.
\end{align*}
The expression
\[
t^{\frac{d+r}{2r}}\int_{0}^t s^{-\frac{d+r}{2r}} (t-s)^{-\frac{d+r}{2r}}\, ds
\]
is uniformly bounded for all $t$, therefore we obtain $v\, \sigma_{\frac{d+r}{2r}}\in L^{\infty}((0,T];W^{1,\infty}_0(\Omega))$ as desired. 

Finally, we can prove the uniqueness of $v$.  Suppose that $\tilde{v}\in L^{\infty}([0,T];L^r(\Omega))$ also satisfies equation (\ref{eq:true_duhamel}).  Lemma \ref{lem:para_est_0} then yields 
\[
\norm{v(t)-\tilde{v}(t)}_{L^1(\Omega)}\lesssim \int_0^t (t-s)^{-\frac{1}{2}}\norm{v(s)-\tilde{v}(s)}_{L^1(\Omega)}\big(\norm{v(s)}_{L^{\infty}(\Omega)}+\norm{\tilde{v}(s)}_{L^{\infty}(\Omega)}\big)\, ds.
\]
Using the $L^{\infty}$ bounds we deduced above, we have 
\[
\norm{v(t)-\tilde{v}(t)}_{L^1(\Omega)}\lesssim \int_0^t s^{-\frac{d}{2r}}(t-s)^{-\frac{1}{2}}\norm{v(s)-\tilde{v}(s)}_{L^1(\Omega)} ds\lesssim t^{1/6}\norm{v-\tilde{v}}_{L^3([0,t];L^1(\Omega))}.
\]
Hence,  $\dot H(t) \lesssim t^{1/2} H(t)$ if we set $$H(t):= \int_0^t \norm{v(s)-\tilde{v}(s)}_{L^1(\Omega)}^3ds.$$ Thus, $H(t) \lesssim H(0) e^{{2\over 3}t^{3 \over 2}}.$ This yields $H \equiv 0$, which means that $v=\tilde{v}$ for $0\leq t\leq T$. 
\end{proof}

\providecommand{\href}[2]{#2}

\end{document}